\newtheorem{theorem}{Theorem}
\newtheorem{lemma}[theorem]{Lemma}
\newtheorem{proposition}[theorem]{Proposition}
\newtheorem{corollary}[theorem]{Corollary}
\theoremstyle{definition}
\newtheorem{definition}[theorem]{Definition}
\newtheorem{hypothesis}{Hypothesis}
\theoremstyle{remark}
\newtheorem{remark}[theorem]{Remark}
\numberwithin{equation}{section} \setcounter{page}{1}
\newcommand{\Real}{\mathbb{R}}
\newcommand{\R}{\mathbb{R}}
\begin{document}
	\title{Integration by parts formulas and Lie's symmetries of SDEs}
	\author{Francesco C. De Vecchi \thanks{Dipartimento di Matematica ``Felice Casorati'', Universit\`a degli Studi di Pavia
  \textit{francescocarlo.devecchi@unipv.it}}\and  Paola Morando \thanks{DISAA, Universit\`a degli Studi di Milano, \textit{paola.morando@unimi.it}} \and Stefania Ugolini \thanks{Dipartimento di Matematica, Universit\`a degli Studi di Milano, \textit{stefania.ugolini@unimi.it}} }
\date{}

\maketitle
\begin{abstract}
A strong quasi-invariance principle and a finite-dimensional integration by parts formula as in the Bismut approach to Malliavin calculus are obtained through a suitable application of Lie's symmetry theory to autonomous stochastic differential equations.  The main stochastic, geometrical and analytical aspects of the theory are discussed and  applications to some Brownian motion driven stochastic models are provided.
\end{abstract}

\section{Introduction}

 Born in 1970s Malliavin, calculus very soon became an (infinite dimensional) analysis on Wiener space (see, e.g.,  \cite{cruzeiro2006malliavin,DiNunno,Norris,malliavin1978stochastic,nualart2006malliavin,watanabe1984lectures}).
 The main tool of this calculus  is certainly the integration by parts formula, and while important applications are the study of the regularity of the probability density of random variables and the related computation of conditional expectations, nowadays many other applications to Stochastic Partial Differential Equations (SPDEs) and to probabilistic numerical methods are available (see, e.g., \cite{malliavin2006stochastic,sanz2005malliavin}).
 Malliavin calculus has a deep connection with Wiener chaos decomposition and can be introduced starting from it (see \cite{DiNunno}).
 It was also applied to diffusion processes which are solutions to Stochastic Differential Equations (SDEs), obtaining the conditions (Hormander's condition) under which the density of the law of the process is smooth and satisfies exponential bounds together with its derivatives, and thus providing the famous probabilistic proof of Hormander theorem (see, e.g., \cite{DiNunno, Watanabe}).
Quasi-invariance properties of diffusion processes with respect to flows generated by vector fields has been established in an abstract Wiener space by \cite{cruzeiro2006malliavin} and in the classical path-space, e.g., by \cite{bell2012malliavin, driver1994cameron}, and \cite{hsu1995quasi}. Another fundamental result in Malliavin calculus is the integration by parts formula for functional of Brownian motion, through which it is possible to prove the closability of Malliavin derivatives and the well-definition of the Ornstein Uhlenbeck operator (see, e.g., \cite{Bogachev2010,nualart2006malliavin}). For this reason, it is interesting to obtain (more or less explicit) integration by parts formulas involving different stochastic processes or probability measures in infinite dimensional spaces (see, e.g., \cite{Bogachev2010} for the problem of integration by parts formula for generic measure on infinite dimensional spaces, \cite{Altman2020,Gordina2023,Zambotti2005} for examples of integration by parts formulas of stochastic processes, \cite{AltmanZambotti2020,Zambotti2003} for the integration of Bessel process applied to the study of SPDEs, and \cite{Bogachev2010,de2022singular,GubinelliHofmanova2021} for applications of integration by parts formula to quantum field theory). \\

 In the Bismut variational approach to this calculus (see \cite{bismut1981martingales}), the integration by parts formula is derived from a fundamental (strong) quasi-invariance principle, which is based on the well-known invariance property of Wiener law under a  measure change via Girsanov theorem. Indeed, the Brownian motion, the filtration generated by it and the functionals of Brownian motion are at the core of Malliavin calculus. See \cite{leandre2019bismut} and references therein for a recent review on Bismut's way to Malliavin calculus.\\

In spite of the fact that we are dealing with an infinite-dimensional differential calculus, in the Bismut approach the infinite dimensional feature is absorbed by the Girsanov formula, while the differential analysis has  a strictly finite-dimensional character (see \cite{bismut1981martingales}). We remark that the finite-dimensional Malliavin calculus still retains great interest, firstly because both the finite-dimensional differential operators and the integration by parts formula  allow us to understand how to pass to the infinite-dimensional limit, and secondly because the strategy mentioned could be useful for generalising the calculus itself in other directions.\\

In this paper we propose a novel approach to finite-dimensional Malliavin-Bismut calculus starting from Lie's symmetries of a given autonomous SDE. The approach is based on the recent (stochastic) Lie's symmetries theory (see \cite{AlDeMoUg2019,AlDeMoUg2018,DMU2,DMU,DMU3,DMU4})  according to which a symmetry of a SDE is a (finite) stochastic transformation sending a solution process to another solution process to the same SDE (see \cite{Craddock2004} and \cite{Craddock2007} for an application of Lie symmetry methods for calculating an interesting class of expectations of It\^o's diffusions starting from the deterministic Lie's symmetries of the associated PDE, see also \cite{Gaeta2023} and references therein for recent developments). Such invariance property of the law of the solution process, which has a global character, allows us to directly formulate an analogue of the quasi-invariance principle of the Malliavin-Bismut calculus (Theorem \ref{fundamental_invariance}). Moreover, considering the same stochastic transformation as a perturbation of the solution process and taking a functional of the diffusion process, we derive a finite-dimensional integration by parts formula from the quasi-invariance principle (see Theorem \ref{integrationbyparts1}). Although the two results, the quasi-invariance principle and the integration by parts formula, are intimately related, the first has a very elementary formulation, while the last achievement requires much more calculations and some suitable technical conditions which are typical of a stochastic variational framework.

The main tool in Lie's symmetries theory is provided by the determining equations, whose solutions are vector fields corresponding to infinitesimal symmetries. Given a (multi-component) infinitesimal symmetry of a SDE, the natural geometrical setting allows us to introduce the one parameter group corresponding to each components, which is nothing else that the one parameter flow associated with the symmetry.
 Since, in order to obtain the integration by parts formula, we have to take suitable derivatives with respect to the flow parameters we need first to establish the existence of such flow components and then to assume all necessary analytical conditions permitting us to take the derivatives with respect to the flow parameter inside the expectation. All technical tools are faced and discussed in a dedicate section. We remark that  the notion of symmetry of a SDE, and in particular its invariance meaning, allows to notably simplify the analytical conditions (see Theorem \ref{technicalderivability} in Section 6).\\

The main advantage of our approach is that it easily includes, beside spatial and measure change transformations, also time transformations (see \cite{DMU} for the introduction of random time change and of rotational transformation for Brownian motion driven SDE, and \cite{AlDeMoUg2018} and \cite{AlDeMoUg2019} for a Lie's symmetry of more general SDEs). Indeed, while spatial transformation via diffeomorphisms gives rise to vector fields with a direct and natural interpretation as generators of the associated flow, by introducing a more rich geometrical setting both time and measure changes can be considered, even though their interpretation sometimes can appear more complex. In other words, we are able to provide a geometrical structure that unifies and puts on the same level all the main transformations of a given SDE, beyond the original change of measure transformation, and thus from this point of view our approach can be considered a generalization of the Bismut way to Malliavin calculus. Furthermore, within the large class of Girsanov measure transformations we privilege the subclass of quasi Doob change of measures because of their useful properties (\cite{DMU3},\cite{DMU4}). Indeed, stochastic models usually present a sufficient number of quasi Doob symmetries and for quasi Doob transformations the Dade-Doleans exponential reduces to a  simpler form without stochastic integrals. Finally, the identification of the one parameter group associated with a given infinitesimal symmetry of the SDE allows us to obtain, as in Bismut's calculus, completely explicit representations of the objects arising in the integration by parts formula.\\

We provide applications of our novel strategy to a one-dimensional Brownian motion, a generalization of the one-dimensional Ornstein-Ulemberg process, the one-dimensional Bessel process and finally to a family of two-dimensional  stochastic volatility models (including Heston model). In order to help the reader, in the examples  we show all calculations and we discuss in details all analytical conditions permitting us to derive the related integration by parts formulas.\\

The plan of the paper is the following. In order to sketch the classical framework, in Section 2 we recall the Bismut's fundamental quasi-invariance principle and the associated and celebrated Clark-Ocone theorem. A brief introduction to Lie's symmetries analysis of SDEs is provided in Section 3, including spatial transformations, deterministic time change and change of measure transformations of Girsanov and quasi Doob type. Section 4 contains an elementary and self-contained geometric description of the Lie's group which arises from the set of stochastic transformations introduced in the previous section. A quasi-invariance principle based on our Lie's symmetry approach to diffusion processes and a (finite-dimensional) integration by parts formula are proposed in Section 5. In Section 6 we provide a general scheme which could be useful to verify the technical conditions of the main theorems. In the last section we discuss the fruitful applications of our strategy to several stochastic models. In Appendix A a generalization of our main result, Theorem \ref{integrationbyparts1}, to the case of smooth cylindrical functionals of the process is provided.\\

\section{A brief recall of the fundamental invariance principle of Malliavin-Bismut calculus}

It is well-known that from the elementary invariance principle for the Lebesgue integral one can easily deduce the integration by part formula. Indeed, since $\forall \epsilon >0$ by denoting with $\lambda $ the usual Lebesgue measure we have
\[
\int_{-\infty}^{+\infty}f(x)d\lambda(x)=\int_{-\infty}^{+\infty}f(x+\epsilon)d\lambda(x),
\]
this implies that
\[
\int_{-\infty}^{+\infty}f^\prime(x)d\lambda(x)=0.
\]
Setting $f=gh$ we get
\[
\int_{-\infty}^{+\infty}g^\prime(x) h(x)d\lambda(x)+\int_{-\infty}^{+\infty}g(x) h^\prime(x)d\lambda(x)=0.
\]

At the basis of Malliavin-Bismut calculus lies an integration by parts formula which generalizes the one for Lebesgue measure to the case of the Wiener measure (i.e. an integration by parts formula involving functionals of a Brownian motion). For the reader's convenience, following \cite{bismut1981martingales,RoWi2000,williams1981begin}, we briefly recall (formally) the fundamental invariance principle of the Bismut approach to Malliavin calculus,  which will be adapted  to the case of symmetric SDEs in the following sections.\\

Let $(\Omega,\mathcal{F},\mathcal{F}_t,\mathbb{P})$ be a filtered probability space. We assume that $ \Omega={C}([0,\mathcal{T}])$ with a fixed time horizon $ \mathcal{T} < \infty$, $\mathbb{P} $ is the Wiener measure and ${W} $ a standard Wiener process. Let $u $ be a predictable bounded and smooth stochastic process and set $\phi(t)=\int_0^tu_sds, \forall t\in [0,\mathcal{T}] $. For $\epsilon\in \mathbb{R}_+ $ let us consider the following Doleans-Dade exponential process
\[
Z^{\epsilon}:=\exp\left(\epsilon\int_{0}^{\mathcal{T}}u_s dW_s-\frac{\epsilon^2}{2}\int_{0}^{\mathcal{T}}u_s^2ds\right).
\]
By Radon-Nykodim theorem we can introduce on $(\Omega,\mathcal{F})$ a new measure $\mathbb{Q}^{\epsilon} $, equivalent to the original one on the Brownian natural filtration, such that
\[
\frac{d\mathbb{Q}^{\epsilon}}{d\mathbb{P}}=Z^{\epsilon}.
\]
As well-known, Girsanov theorem implies that $W-\epsilon \phi$ is a $\mathbb{Q}^{\epsilon}-$ Wiener process. Since the expectation depends only on the law of the process, for all strongly differentiable function $g$ on $\Omega$ we have
\[
\mathbb{E}_{\mathbb{P}}[g(W)]=\mathbb{E}_{\mathbb{{Q}^{\epsilon}}}\left[g\left(W-\epsilon \int_0^tu_sds\right)\right].
\]
This formula states in particular that both the quantities do not depend on the parameter $\epsilon$.
By applying a Radon-Nykodim change of measure we obtain the well-known fundamental quasi-invariance principle
\begin{equation}\label{invariance_principle10}
\mathbb{E}_{\mathbb{P}}[g(W)]=\mathbb{E}_{\mathbb{{P}}}\left[g\left(W-\epsilon \int_0^tu_sds\right)\exp\left(\epsilon\int_{0}^{\mathcal{T}}u_s dW_s-\frac{\epsilon^2}{2}\int_{0}^{\mathcal{T}}u_s^2ds\right)\right].
\end{equation}
Considering the simplest case  $g(W):=g(W_{\mathcal{T}})$, where $g:\mathbb{R} \rightarrow \mathbb{R}$ is a smooth bounded function, deriving, at least formally, both sides of equation \eqref{invariance_principle10}  with respect to $\epsilon $ and evaluating at  $\epsilon=0 $ we get
\begin{equation}\label{integrationbyparts_zero}
0=-\mathbb{E}_{\mathbb{{P}}}\left[g'(W_{\mathcal{T}})\left(\int_0^tu_sds\right)\right]+\mathbb{E}_{\mathbb{{P}}}\left[g(W_{\mathcal{T}})\left(\int_{0}^{\mathcal{T}}u_s dW_s\right)\right],
\end{equation}
which gives an integration by parts formula for the function $g(W_{\mathcal{T}})$. If we take $g$ as a generic function of the Brownian motion $W$, we can replace the derivatives $g'$ with the Malliavin derivatives $D_t(g)$ obtaining the general integration by parts formula of Malliavin calculus (see, e.g., \cite{williams1981begin}).\\

The mentioned integration by parts formula permits to prove both the closability of Malliavin derivatives and, when applied to regular solutions to SDEs (i.e. under suitable conditions on the coefficients), the smoothness of their transition probabilities. A similar but slightly different approach to the same problem was proposed by Bismut in \cite{bismut1981martingales} permitting to generalize  formula \eqref{integrationbyparts_zero} to a pathwise unique strong solution $X$ of a Brownian motion driven SDE with smooth coefficients by considering the geometrical structure associated with the flow generated by the SDE itself. For convenience of the reader we sketch here the presentation given in (\cite[Chapter 3]{bell2012malliavin}) as a very short and formal introduction to Bismut work.\\

Consider the equation of the flow $\varphi_t(\omega,x)$ associated with the one dimensional SDE with (smooth) coefficient $\mu,\sigma$, i.e. the process solving the equation
\begin{equation}\label{eq:SDEflow}
\varphi_t(\omega,x)=x+\int_0^t\mu(\varphi_s(\omega,x))ds+\int_0^t\sigma(\varphi_s(\omega,x))dW_s.
\end{equation}
If $u_s$ is a predicable stochastic process which is square integrable with respect to time, we introduce the process
\[ \eta_t^{u}(x)=\partial_x\varphi_t(\omega,x)\int_0^t{\frac{\sigma(\varphi_s(\omega,x))}{\partial_x\varphi_s(\omega,x)}u_sds}. \]
It is interesting to note that
\[(\varphi^{*})^{-1}(\sigma(x)\partial_x)=\frac{\sigma(\varphi_s(\omega,x))}{\partial_x\varphi_s(\omega,x)}\partial_x,\]
where $(\varphi^{*})^{-1}(\omega,\cdot)$ is the inverse of the push-forward associated with the diffeomorfism $\varphi_t(\omega,\cdot)$.
If $X_t$ is a solution to the SDE \eqref{eq:SDEflow} (i.e. $X_t=\varphi_t(\cdot,X_0)$), then, under suitable conditions on the regularity and bounds of the coefficients $\mu$ and $\sigma$, one gets the following generalization of equation \eqref{integrationbyparts_zero} involving the process $X_t$ on the time horizon $[0,\mathcal{T}]$
\begin{equation}\label{eq:integrationbyparts_SDE}
0=-\mathbb{E}_{\mathbb{{P}}}\left[g'(X_{\mathcal{T}})\eta^u_{\mathcal{T}}\right]+\mathbb{E}_{\mathbb{{P}}}\left[g(X_{\mathcal{T}})\left(\int_{0}^{\mathcal{T}}u_s dW_s\right)\right].
\end{equation}
In the special case where $\mu=0$ and $\sigma=1$ formula \eqref{eq:integrationbyparts_SDE} coincides with formula \eqref{integrationbyparts_zero}.  Furthermore, the possibility of iterating \eqref{eq:integrationbyparts_SDE} for obtaining the higher order derivatives $g^{(n)}(X_t)$ of $g$ in terms of $g$ itself, permits to prove both the existence of a smooth density for the random variable $g(X_t)$ and a related martingale representation theorem (\cite{bismut1981martingales}).\\

One of the main differences between formula \eqref{integrationbyparts_zero} and formula \eqref{eq:integrationbyparts_SDE} is that while equation \eqref{integrationbyparts_zero} involves only a local and explicit expression of the Brownian motion $W_t$ (or, more generally, of the SDE $dX_t=dW_t$), in order to calculate the integration by parts formula \eqref{eq:integrationbyparts_SDE} we need to compute the highly non-local expression $\eta^u_t(x)$ involving the derivatives of the flow $\partial \varphi_t$. This means that we have to know the solution to the SDE \eqref{eq:SDEflow} starting from different initial spatial points $x$,  which generically cannot be expressed by a closed formula involving only the processes $X_t, W_t$ and $u_t$. In the rest of the paper we show how, with a direct generalization of the reasoning used to obtain \eqref{integrationbyparts_zero}, it is possible to simplify formula \eqref{eq:integrationbyparts_SDE} exploiting the symmetries of the SDE $(\mu,\sigma)$  and the related invariance properties. Our new direct proof of the integration by parts formula for solutions to Brownian motion driven SDEs allows us to avoid the (generically necessary) regularity and growth assumptions on the coefficients $\mu,\sigma$, essentially by using the important invariance properties of the process.

\section{A class of Lie's symmetries of a SDE \label{section_1}}

In this section we recall a particular class of symmetries of a SDE which will be used in the following. For more general classes and for all the proofs see \cite {DMU3} and \cite{DMU4}. A very general settings including SDEs driven by semimartingales with jumps can be found in \cite{AlDeMoUg2019,AlDeMoUg2018,DeMoUg2019}. \\

For simplicity, the Einstein summation convention on repeated indices is used throughout the paper. Let $M, M'$ be  open subsets of $\mathbb{R}^n$ and let us fix a finite time horizon $[0,\mathcal{T}]$ with $\mathcal{T}\geq0$.\\

In this paper we consider the following weak solutions to the class of autonomous SDEs.
\begin{definition}\label{SDE1}
	We say that a  SDE $\left(\mu,\sigma\right)$ admits a weak solution with initial distribution $\mu$
if there exists a probability space $\left(\Omega,\mathcal{F}, (\mathcal{F}_t)_{t \in [0,\mathcal{T}])} ,\mathbb{P}\right)$ satisfying the usual conditions, and a couple of semimartingales $(X,W)$ (taking values in $\Real^n $ and $\Real^m$ respectively) such that for all $t \in [0,\mathcal{T}]$,

i) $W$ is an $\mathcal{F}_t$ Brownian motion;

ii) $X_0$ has law $\mu$;

iii)
$\int_{0}^{t}|\sigma \sigma^T(X_s)|(\omega)+|\mu(X_s)|\left(\omega \right)ds < \infty \,  \operatorname{for} \, \operatorname{almost} \,  \operatorname{every} \,
\omega \in \Omega$;

iv)
	\begin{equation}\label{SDE_definition}
X_{t} -X_0=\int_{0}^{t  } \mu \left(X_s\right)ds + \int_{0}^{t }\sigma_\alpha \left(X_s\right)dW^\alpha_s	
	\end{equation}
If there exists a weak solution for each initial distribution $\mu$, then we say that there is a weak solution to \eqref{SDE_definition}. 	
	
\end{definition}
Let $\Omega^n \equiv C([0,\mathcal{T}], \Real^n) $ be the path-space and consider the filtration $\mathcal{H}^n_t=\sigma \left( X_s, s \leq t \right )$.
Suppose that $(X,W)$ is a weak solution to \eqref{SDE_definition} starting at $x\in \Real^n$ and let $\mathbb{P}_x $ be the law of $X$. We know that  $\mathbb{P}_x $ is a probability measure on $ (\Omega^n,\mathcal{H}^n) $ and that $\mathbb{P}_x $ solves the martingale problem for $(\sigma\sigma^T,\mu)$ starting at $x$, according to the following definition.

\begin{definition}[Martingale problem solution]
Let $\sigma $ and $\mu$ be previsible path functionals. Then for $x\in \Real^n$ we say that	the probability measure $\mathbb{P}_x $ is a solution to  the martingale problem for $(\sigma\sigma^T,\mu)$ starting at $x$ if the following conditions holds

i) $\mathbb{P}_x(X_0=x)=1 $;

ii) for each $g \in C^\infty(\Real^n)$, under $\mathbb{P}_x $,
\[
C^g_t=g(X_t)-g(X_0)-\int_{0}^{t}Lg(X_s)ds
\]
is an $\mathcal{H}^n_t- $ martingale, where	
\begin{equation}\label{eqGENERATORL}
L= \frac{1}{2}\left(\sigma \sigma^T\right)^{ij} \partial_i \partial_j+ \mu^i \partial_i.
\end{equation}
\end{definition}

It is well-known that the martingale-problem formulation of a SDE is equivalent to the weak-solution formulation.

\subsection{Spatial transformations via diffeomorphisms}
The most natural transformation of a SDE is a  diffeomorphism  $\Phi:M \to M'$ acting on the process component $X$.
Denoting with $\nabla \Phi : M \to \operatorname{Mat}\left(n,m\right)$ the Jacobian matrix
\[	 (\nabla \Phi)_j^i=\partial_j \Phi^i.\]
and  applying  It\^o formula (see, e.g., \cite{RoWi2000} Section 32 or \cite{Oksendal} Chapter 4) we prove the following result.
\begin{proposition}\label{Prop1}
	Given a diffeomorphism $\Phi : M \to M'$, if the process $\left(X,W\right)$ is solution to the  SDE $\left(\mu,\sigma\right)$, then the process $\left(\Phi\left(X\right),W\right)$ is solution to the SDE $\left(\mu',\sigma'\right)$ with
	
	\begin{eqnarray*}
		\mu'&=  L\left(\Phi\right)\circ \Phi^{-1}\\
		\sigma'&=\left(\nabla \Phi \cdot \sigma\right)\circ\Phi^{-1}
	\end{eqnarray*}
where $L$ is the infinitesimal generator given in \eqref{eqGENERATORL}.
\end{proposition}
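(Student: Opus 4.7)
The natural plan is to apply the multi-dimensional It\^o formula componentwise to the process $Y_t := \Phi(X_t)$, and then recognise that the resulting drift and diffusion coefficients are precisely $\mu' \circ \Phi(X_t)$ and $\sigma' \circ \Phi(X_t)$, after which one reads off the claim by rewriting everything as functions of $Y_t = \Phi(X_t)$.

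Concretely, I would first write, for each component $\Phi^i$, the It\^o expansion
\[
\Phi^i(X_t) - \Phi^i(X_0) = \int_0^t \partial_j \Phi^i(X_s)\, dX^j_s + \tfrac{1}{2}\int_0^t \partial_j \partial_k \Phi^i(X_s)\, d[X^j, X^k]_s,
\]
which requires $\Phi \in C^2$ (implicit in ``diffeomorphism'' in the sense meant by the paper). Substituting the SDE representation $dX^j = \mu^j(X)\, ds + \sigma^j_\alpha(X)\, dW^\alpha$ from \eqref{SDE_definition}, and using $d[X^j,X^k]_s = (\sigma\sigma^T)^{jk}(X_s)\, ds$, the drift part collects to
\[
\Bigl[\partial_j \Phi^i(X_s)\, \mu^j(X_s) + \tfrac12 (\sigma\sigma^T)^{jk}(X_s)\, \partial_j \partial_k \Phi^i(X_s)\Bigr] ds = L(\Phi^i)(X_s)\, ds
\]
with $L$ the generator in \eqref{eqGENERATORL}, while the martingale part reduces to $\partial_j \Phi^i(X_s)\, \sigma^j_\alpha(X_s)\, dW^\alpha_s = (\nabla\Phi \cdot \sigma)^i_\alpha(X_s)\, dW^\alpha_s$.

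The last step is the change of variable $X_s = \Phi^{-1}(Y_s)$: substituting this into both coefficients yields the drift $(L(\Phi) \circ \Phi^{-1})(Y_s) = \mu'(Y_s)$ and the diffusion $((\nabla\Phi \cdot \sigma) \circ \Phi^{-1})(Y_s) = \sigma'(Y_s)$, which is exactly the SDE $(\mu', \sigma')$ for $Y$. To conclude that $(Y,W)$ is a weak solution in the sense of Definition \ref{SDE1}, I would check the remaining items: $W$ is still an $\mathcal{F}_t$ Brownian motion (unchanged), $Y_0 = \Phi(X_0)$ has a well-defined pushforward law, and the integrability condition (iii) for $(\mu', \sigma')$ transfers from the corresponding bound for $(\mu, \sigma)$ because $\Phi$ and $\Phi^{-1}$ are continuous and $X$ has continuous trajectories, so on each random compact set $\{X_s : s\in[0,t]\}$ the Jacobian $\nabla\Phi$ and second derivatives are bounded.

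The step that actually carries any content is the bookkeeping of the generator: recognising that the sum of the transport term $\partial_j \Phi^i \mu^j$ with the It\^o correction $\tfrac12 (\sigma\sigma^T)^{jk} \partial_j\partial_k \Phi^i$ is exactly $L(\Phi^i)$. Everything else is essentially automatic once $C^2$-regularity of $\Phi$ is assumed; the main (minor) obstacle is only the verification of the local integrability condition (iii) in Definition \ref{SDE1} for the transformed coefficients, which I would handle by a straightforward continuity-plus-compactness argument along the paths of $X$.
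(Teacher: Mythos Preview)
Your proposal is correct and follows exactly the approach the paper indicates: the paper's own proof consists solely of the sentence ``applying It\^o formula (see, e.g., \cite{RoWi2000} Section 32 or \cite{Oksendal} Chapter 4) we prove the following result,'' and what you have written is precisely that computation carried out in full, including the identification of the drift with $L(\Phi^i)$ and the diffusion with $(\nabla\Phi\cdot\sigma)^i_\alpha$. If anything, your treatment is more thorough than the paper's, since you also address the verification of items (i)--(iii) in Definition~\ref{SDE1} for the transformed process.
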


\subsection{Deterministic time changes}
In this paper for simplicity we consider only deterministic time change. For random time change in Lie's symmetry framework see \cite{DMU} and \cite{AlDeMoUg2019}.
We denote by $f(t) $ a deterministic absolutely continuous time transformation given by
\[ t'=f(t)=\int_0^t f^\prime (s)ds,\]
where $\eta:=f^\prime$ is a smooth and strictly positive real function such that $f(0)=0 $. Time change is a tranformation
acting on both components of the solution process $(X,W)$ which we denote with $\mathcal{H}_\eta $.\\
If $W'$  is the solution to

\[%
dW'_t=\sqrt{\eta (t) }dW_t,
\]%
then $\mathcal{H}_\eta\left(W^\prime\right)$ is again a Brownian motion.

\begin{proposition}\label{Prop2}
	Let $\eta :\mathbb{R}_+  \to \mathbb{R}_+$ be a smooth and strictly positive function.
Let $\left(X,W\right)$ be a solution to the SDE $\left(\mu,\sigma\right)$. Then the process $\left(H_\eta\left(X\right),H_\eta \left(W'\right)\right)$ is solution to the SDE $\left(\mu',\sigma'\right)$ with
	\begin{eqnarray*}
				\mu'&=&\frac{1}{\eta}\mu\\
		\sigma'&=&\frac{1}{\sqrt{\eta}}\sigma.
	\end{eqnarray*}
\end{proposition}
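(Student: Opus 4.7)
The plan is to combine a deterministic change of variable in the Lebesgue integral with the time-change theorem for continuous martingales in the stochastic integral. Set $\tau=f(t)=\int_0^t\eta(s)\,ds$; because $\eta$ is smooth and strictly positive, $f$ is a smooth diffeomorphism of $[0,\mathcal{T}]$ onto $[0,f(\mathcal{T})]$ with inverse $f^{-1}$. Following the convention of the preceding paragraphs, the transformation $H_\eta$ acts by $H_\eta(Z)_\tau := Z_{f^{-1}(\tau)}$, so I introduce $Y_\tau := X_{f^{-1}(\tau)}$ and $B_\tau := W'_{f^{-1}(\tau)}$, and the task is to show that $(Y,B)$ satisfies \eqref{SDE_definition} with coefficients $(\mu/\eta,\sigma/\sqrt{\eta})$ on the new time interval.

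The proof proceeds in three steps. First, I show that $B$ is a Brownian motion with respect to the time-changed filtration $\mathcal{G}_\tau := \mathcal{F}_{f^{-1}(\tau)}$. Since $W'_t=\int_0^t\sqrt{\eta(s)}\,dW_s$ is a continuous martingale with deterministic quadratic variation $\langle W'\rangle_t = \int_0^t\eta(s)\,ds = f(t)$, the process $B$ has quadratic variation $\tau$ and, by L\'evy's characterization, is a $\mathcal{G}_\tau$-Brownian motion. Second, I rewrite the driving noise in the original SDE using $dW_s = \eta(s)^{-1/2}\,dW'_s$ to obtain
\begin{equation*}
X_t - X_0 = \int_0^t \mu(X_s)\,ds + \int_0^t \frac{\sigma_\alpha(X_s)}{\sqrt{\eta(s)}}\,dW'^\alpha_s.
\end{equation*}
Third, I apply the substitution $s=f^{-1}(r)$: in the Lebesgue integral $ds = \eta(f^{-1}(r))^{-1}\,dr$, and in the It\^o integral the deterministic time-change theorem for continuous martingales (e.g., Revuz--Yor, Prop.~V.1.5, applied to the $C^1$ strictly increasing $f^{-1}$) gives
\begin{equation*}
\int_0^t \frac{\sigma_\alpha(X_s)}{\sqrt{\eta(s)}}\,dW'^\alpha_s \;=\; \int_0^{f(t)} \frac{\sigma_\alpha(Y_r)}{\sqrt{\eta(f^{-1}(r))}}\,dB^\alpha_r.
\end{equation*}
Evaluating at $t=f^{-1}(\tau)$ and recognizing that $\eta$ is to be read along the new time via $\eta\circ f^{-1}$, I obtain exactly the SDE with drift $\mu'=\mu/\eta$ and diffusion $\sigma'=\sigma/\sqrt{\eta}$ driven by $B$, with $Y_0=X_0$ having the prescribed initial law and the integrability condition iii) transferring through the diffeomorphism $f$.

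The routine bookkeeping of integrability of $\mu/\eta$ and $(\sigma\sigma^T)/\eta$ against $dr$ on $[0,f(\mathcal{T})]$ follows at once from the original condition iii) and the boundedness of $\eta$ and $1/\eta$ on compacts. The main obstacle, and the only nontrivial step, is the combined use of L\'evy's characterization and the time-change formula for stochastic integrals in Step 1 and the $dB$-part of Step 3: one must simultaneously keep track of two filtrations, verify that the integrand $\sigma_\alpha(Y_r)/\sqrt{\eta(f^{-1}(r))}$ is predictable with respect to $\mathcal{G}_r$, and confirm that the change-of-time identity for It\^o integrals is applicable without integrability loss. Since $f$ is deterministic and $C^1$, no genuine Dambis--Dubins--Schwarz machinery is needed; nonetheless, this filtration matching is the key technical point to present with care.
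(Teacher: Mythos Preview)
Your proof is correct and follows the standard route; the paper itself does not give an explicit proof of this proposition but only sets up the notation (defining $W'$ by $dW'_t=\sqrt{\eta(t)}\,dW_t$ and noting that $\mathcal{H}_\eta(W')$ is again a Brownian motion) and then cites the martingale-problem analogue from Rogers--Williams~V.26. Your argument via L\'evy's characterization plus the deterministic time-change formula for It\^o integrals is exactly the computation the paper's setup is pointing to, so nothing further is needed.
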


Denoting by $f^{-1}(t)$ the inverse of $f(t)$, we can recall the analogue of the above proposition for the martingale problem formulation of a SDE.

\begin{proposition}[Rogers-Williams,V.26]
Let $\eta :\mathbb{R}_+  \to \mathbb{R}_+$ be a smooth and strictly positive function. Then the time change $f^{-1}(t) $ transforms the martingale problem for $ (a,\mu)$ into the martingale problem for $(\frac{a}{\eta},\frac{\mu}{\eta,})$ (with $ a=\sigma\sigma^T$) 	
\end{proposition}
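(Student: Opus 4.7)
The plan is to exploit the martingale characterization directly and perform a deterministic time change of the associated martingale. Assume $\mathbb{P}_x$ solves the martingale problem for $(a,\mu)$, so that for every $g\in C^\infty(\mathbb{R}^n)$ the process
\[ C^g_t = g(X_t)-g(X_0)-\int_0^t Lg(X_s)\,ds \]
is an $\mathcal{H}^n_t$-martingale under $\mathbb{P}_x$, with $L=\tfrac{1}{2}a^{ij}\partial_i\partial_j+\mu^i\partial_i$. Define the time-changed process $Y_t:=X_{f^{-1}(t)}$ and the time-changed filtration $\widetilde{\mathcal{H}}_t:=\mathcal{H}^n_{f^{-1}(t)}$. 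Because $\eta$ is smooth and strictly positive, $f(t)=\int_0^t\eta(s)\,ds$ is a smooth strictly increasing bijection of $\mathbb{R}_+$ with smooth inverse, so $\widetilde{\mathcal{H}}_t$ is a bona fide filtration and $Y$ is adapted to it.

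The standard elementary fact that a deterministic continuous time change of a martingale is still a martingale in the time-changed filtration then yields that $t\mapsto C^g_{f^{-1}(t)}$ is a $\widetilde{\mathcal{H}}_t$-martingale under $\mathbb{P}_x$. The substantive step is the change of variable $s=f^{-1}(u)$ in the finite-variation integral, which (since $f$ is smooth with $(f^{-1})'(u)=1/\eta(f^{-1}(u))$) gives
\[ C^g_{f^{-1}(t)} \;=\; g(Y_t)-g(Y_0)-\int_0^t \frac{Lg(Y_u)}{\eta(f^{-1}(u))}\,du. \]
Recognising the integrand as $\widetilde{L}g(Y_u)$ with $\widetilde{L}=\tfrac{1}{2}(a^{ij}/\eta)\partial_i\partial_j+(\mu^i/\eta)\partial_i$ (with $\eta$ in the new time variable understood as the reparametrized function $\eta\circ f^{-1}$) identifies $Y$ as a solution of the martingale problem for $(a/\eta,\mu/\eta)$, and the initial condition $Y_0=x$ follows automatically from $f^{-1}(0)=0$.

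There is no genuine obstacle: once one performs the change of variable and invokes the time-change-of-martingale principle, the result drops out. The only point that requires a small disclaimer is the meaning of the symbol $\eta$ in the transformed coefficients, which must be read as $\eta\circ f^{-1}$ when expressed on the new time line. Everything goes through because the time change is \emph{deterministic}, so predictability, adaptedness and the filtration transformation pose no issue, and because the smoothness of $\eta$ legitimises the change of variable in the Lebesgue integral pathwise.
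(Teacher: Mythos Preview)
The paper does not supply its own proof of this proposition: it is stated as a recalled result with the attribution ``Rogers--Williams, V.26'' and no argument is given in the text. Your proof is correct and is precisely the standard one found in that reference: evaluate the $C^g$-martingale at the deterministic increasing times $f^{-1}(t)$, observe that this remains a martingale for the time-changed filtration (trivially, since for deterministic $\tau(s)<\tau(t)$ the martingale property at those two times is exactly what is needed), and perform the substitution $s=f^{-1}(u)$ in the drift integral to read off the new generator. Your disclaimer about interpreting $\eta$ as $\eta\circ f^{-1}$ on the new time axis is appropriate and matches the convention implicit in the paper's Proposition~\ref{Prop2}.
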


\subsection{Random measure changes}

In order to further enlarge the family of admissible transformations, we randomly change the probability measure under which the driven process is a Brownian motion by using Girsanov theorem.\\
For the probabilistic theory of absolutely continuous predictable change of measure of Brownian motion the reader can see, e.g.,  \cite{RoWi2000} Section 38, while an extended treatment of the subject can be found in \cite{Oksendal} Section 8.6.\\

Given a previsible process $u_{t\in[0,\mathcal{T}]} $, let us define the Doleans-Dade exponential process  $\left(Z_t\right)_{t\in[0,\mathcal{T}]}$ by setting
\begin{equation}\label{supermartingale}
Z_t=Z_t(\theta):=\exp\left\{\int_{0}^{t}u_sdW_s -\frac{1}{2}\int_{0}^{t}u_s^2ds \right\}.
\end{equation}
By It\^o formula one has
\[ dZ_t=Z_t\theta_tdW_t,\]
which says that $Z$ is a local martingale. Indeed,  since $Z_t$ is strictly positive, we have that $Z_t$ is always a supermartingale.
Moreover, being  the Radon-Nikodym derivative   in Girsanov theorem strictly positive, i.e. $\frac{\mathbb{Q}_{\mathcal{F}_{\mathcal{T}}}}{\mathbb{P}_{\mathcal{F}_{\mathcal{T}}}}=Z_\mathcal{T}>0$, one can prove that the measure  $\mathbb{Q}_{\mathcal{F}_{\mathcal{T}}}$ is actually equivalent to  $\mathbb{P}_{\mathcal{F}_{\mathcal{T}}}$.\\

Instead of using the well known Novikov condition (see \cite{ReYo1999}, Chapter VIII, Proposition 1.14) to force the supermartingale $Z_t$ to be a $\mathbb{P}$-(global) martingale, according to \cite{DMU3} we follow an alternative strategy, given by the next Lemma, which  assumes the non explosiveness property both of the original SDE and of the transformed one, according to the following definition.

\begin{definition} Let $\mu\colon{M}\to{\mathbb{R}^n}$ and $\sigma\colon{M}\to{Mat(n,m)}$ be two smooth functions. The SDE $(\mu,\sigma)$ is called \emph{non explosive} if any solution $(X,W)$ to $(\mu,\sigma)$ is defined for all times $t\geq0$.
	
	A smooth vector field $h$ is called \emph{non explosive} for the non explosive SDE $(\mu,\sigma)$ if the SDE $(\mu+\sigma\cdot{h},\sigma)$ is a non explosive SDE.
	
	A positive smooth function $\eta$ is called a \emph{non explosive} time change for the non explosive SDE $(\mu,\sigma)$ if the SDE $\Bigr(\frac{\mu}{\eta},\frac{\sigma}{\sqrt{\eta}}\Bigl)$ is non explosive.
\end{definition}
In the following we consider a smooth function $h\colon M\to\mathbb{R}^m$ such that $h(X)$ is a predictable and non explosive stochastic process for the continuous solution process $X$ of the SDE $(\mu,\sigma)$.
\begin{lemma}[\cite{DMU3}]
	\label{lemm:lea}
	Let $(\mu,\sigma)$ be a non explosive SDE with a weak solution $(X,W)$ and let $h\colon{M}\to\mathbb{R}^n$ be a smooth non explosive vector field. Then the exponential supermartingale $(Z_t)_{t\in[0,\mathcal{T}]}$ associated with $u_t=h(X_t)$ is a $\mathbb{P}$-(global) martingale.
\end{lemma}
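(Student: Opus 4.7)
The plan is to upgrade the nonnegative local martingale $Z$ to a true $\mathbb{P}$-martingale on $[0,\mathcal{T}]$ by showing $\mathbb{E}[Z_\mathcal{T}]=1$, via a Benes-type localization that plays the non-explosiveness of the original SDE $(\mu,\sigma)$ off against the non-explosiveness of the perturbed SDE $(\mu+\sigma\cdot h,\sigma)$.

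First, I would pick an exhaustion of $M$ by compact sets $K_1\subset K_2\subset\cdots$ with $\bigcup_n K_n=M$, and introduce the exit times $\tau_n:=\inf\{t:X_t\notin K_n\}$. Since $h$ is smooth on $M$, the integrand $h(X)$ is bounded by $\max_{K_n}|h|$ on $[0,\tau_n]$, so the stopped exponential $Z^{\tau_n}$ satisfies Novikov's condition and is a genuine $\mathbb{P}$-martingale. Hence $\mathbb{E}[Z_{\mathcal{T}\wedge\tau_n}]=1$, which rewrites as
\[
\mathbb{E}\!\left[Z_\mathcal{T}\,\mathbf{1}_{\{\tau_n>\mathcal{T}\}}\right]+\mathbb{E}\!\left[Z_{\tau_n}\,\mathbf{1}_{\{\tau_n\leq\mathcal{T}\}}\right]=1.
\]
The non-explosiveness of $(\mu,\sigma)$ forces $\tau_n\uparrow\infty$ $\mathbb{P}$-a.s., so monotone convergence identifies the limit of the first summand with $\mathbb{E}[Z_\mathcal{T}]$. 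It therefore suffices to show that the second summand tends to $0$.

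To this end, I would define the probability $\mathbb{Q}_n$ on $\mathcal{F}_\mathcal{T}$ by $d\mathbb{Q}_n/d\mathbb{P}=Z_{\mathcal{T}\wedge\tau_n}$. Girsanov's theorem supplies a $\mathbb{Q}_n$-Brownian motion $W^{(n)}_t:=W_t-\int_0^{t\wedge\tau_n}h(X_s)\,ds$, and a direct rewriting of the It\^o equation shows that, up to time $\tau_n$, the pair $(X,W^{(n)})$ is a weak solution to the perturbed SDE $(\mu+\sigma\cdot h,\sigma)$ with initial law $\mathrm{Law}(X_0)$. Because $\mu,\sigma,h$ are smooth and hence locally Lipschitz on $K_n$, pathwise uniqueness for this SDE holds up to exit from $K_n$, and by Yamada--Watanabe this yields weak uniqueness for the stopped problem. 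Consequently the $\mathbb{Q}_n$-law of $X^{\tau_n}$ coincides with the law of $\tilde X$ stopped at its exit time $\tilde\tau_n$ from $K_n$, where $(\tilde X,\tilde W)$ is any weak solution of the perturbed SDE with the same initial distribution, defined on an auxiliary space $(\tilde\Omega,\tilde{\mathbb{P}})$. Thus
\[
\mathbb{E}\!\left[Z_{\tau_n}\,\mathbf{1}_{\{\tau_n\leq\mathcal{T}\}}\right]=\mathbb{Q}_n(\tau_n\leq\mathcal{T})=\tilde{\mathbb{P}}(\tilde\tau_n\leq\mathcal{T}),
\]
and the hypothesis that $h$ is non-explosive for $(\mu,\sigma)$, meaning that the perturbed SDE is non-explosive, ensures $\tilde\tau_n\uparrow\infty$ $\tilde{\mathbb{P}}$-a.s., so the right-hand side vanishes as $n\to\infty$.

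The main obstacle is precisely the identification $\mathbb{Q}_n(\tau_n\leq\mathcal{T})=\tilde{\mathbb{P}}(\tilde\tau_n\leq\mathcal{T})$: this is where the $n$-dependence of $\mathbb{Q}_n$ has to be disentangled from the non-explosiveness hypothesis, and the step is only legitimate because the perturbed SDE admits a uniqueness-in-law theory up to exit from compacts, supplied here by smoothness of the coefficients. Once both convergence statements are in hand, passing to the limit in the displayed identity gives $\mathbb{E}[Z_\mathcal{T}]=1$; combined with $Z_0=1$ and the supermartingale property, this automatically promotes the nonnegative supermartingale $Z$ to a martingale on $[0,\mathcal{T}]$.
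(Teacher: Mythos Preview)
Your argument is correct and follows the standard Bene\v{s}/McKean localization scheme: stop at exit times from compacts, use Novikov on each level, and then transfer the residual mass $\mathbb{E}[Z_{\tau_n}\mathbf{1}_{\{\tau_n\le\mathcal{T}\}}]$ to the perturbed SDE via Girsanov and local uniqueness in law, where non-explosion of $(\mu+\sigma\cdot h,\sigma)$ kills it. The only delicate point, which you flag yourself, is the identification $\mathbb{Q}_n(\tau_n\le\mathcal{T})=\tilde{\mathbb{P}}(\tilde\tau_n\le\mathcal{T})$; your justification through local Lipschitz continuity and Yamada--Watanabe is adequate, since the exit time from $K_n$ is a functional of the stopped path and weak uniqueness up to exit suffices.

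Note, however, that the paper does not actually prove this lemma: it is quoted from \cite{DMU3} and stated without proof here. So there is no ``paper's own proof'' to compare against in this document. What you have written is precisely the kind of argument one expects to find in the cited reference, and it is self-contained enough to stand on its own.
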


The following theorem shows how this probability measure change works on a given SDE.
\begin{theorem}
	\label{theo:theg}
	Let $(X,W)$ be a solution to the non explosive SDE $(\mu,\sigma)$ on the probability space $(\Omega,\mathcal{F},\mathbb{P})$ and let $h$ be a smooth non explosive vector field for $(\mu,\sigma)$. Then $(X,W')$ is a solution to the SDE $(\mu',\sigma')=(\mu+\sigma\cdot{h},\sigma)$ on the probability space $(\Omega,\mathcal{F},\mathbb{Q})$, where
	
	\begin{align*}
	W'_t &=  -\int_0^t h(X_s)ds+W_t,\\
	\left.\frac{d\mathbb{Q}}{d\mathbb{P}}\right|_{\mathcal{F}_{\mathcal{T}}} &=  \exp \left( \int_0^\mathcal{T} h_j(X_s)dW_s^j-\frac{1}{2}\int_0^\mathcal{T} \sum_{j=1}^m(h_j(X_s))^2ds \right).
	\end{align*}
	
\end{theorem}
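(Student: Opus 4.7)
The plan is to combine Lemma \ref{lemm:lea} with the classical multidimensional Girsanov theorem, in three essentially bookkeeping steps. First, I would verify that the process $u_t := h(X_t)$ qualifies as a previsible integrand for the stochastic exponential: since $X$ is a continuous $\mathcal{F}_t$-adapted semimartingale and $h$ is smooth, $u$ is continuous and adapted, hence previsible and locally bounded on $[0,\mathcal{T}]$. Applying Lemma \ref{lemm:lea} with this choice of $u$, the Doleans-Dade exponential
\[
Z_t=\exp\!\left(\int_0^t h_j(X_s)\,dW^j_s-\frac{1}{2}\int_0^t\sum_{j=1}^m h_j(X_s)^2\,ds\right)
\]
is a genuine (not just local) $\mathbb{P}$-martingale on $[0,\mathcal{T}]$. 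Because $Z_\mathcal{T}>0$ and $\mathbb{E}_{\mathbb{P}}[Z_\mathcal{T}]=1$, the prescription $d\mathbb{Q}/d\mathbb{P}|_{\mathcal{F}_{\mathcal{T}}}=Z_\mathcal{T}$ defines a probability measure $\mathbb{Q}$ on $(\Omega,\mathcal{F}_{\mathcal{T}})$ equivalent to $\mathbb{P}$.

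Next, I would invoke Girsanov's theorem in the form stated in Rogers-Williams Section 38 (or \O ksendal Section 8.6): under the equivalent measure $\mathbb{Q}$ with density $Z_\mathcal{T}$, the process
\[
W'_t:=W_t-\int_0^t h(X_s)\,ds
\]
is a continuous $\mathcal{F}_t$-local martingale whose quadratic covariation matrix coincides with that of $W$, namely $\langle W'^i,W'^j\rangle_t=\delta^{ij}t$. By L\'evy's characterization, $W'$ is therefore a standard $m$-dimensional $\mathbb{Q}$-Brownian motion.

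Finally, I would substitute the identity $dW^\alpha_s=dW'^\alpha_s+h^\alpha(X_s)\,ds$ into the original equation \eqref{SDE_definition} to obtain, $\mathbb{P}$-a.s.\ and hence $\mathbb{Q}$-a.s.\ (the two measures being equivalent on $\mathcal{F}_{\mathcal{T}}$),
\[
X_t-X_0=\int_0^t\bigl(\mu(X_s)+\sigma_\alpha(X_s)h^\alpha(X_s)\bigr)ds+\int_0^t\sigma_\alpha(X_s)\,dW'^\alpha_s,
\]
which is precisely the SDE $(\mu+\sigma\cdot h,\sigma)$. Together with the integrability requirements in Definition \ref{SDE1} (which are satisfied because $h$ is smooth and $X$ has continuous paths), this shows that $(X,W')$ is a weak solution to $(\mu',\sigma')$ on $(\Omega,\mathcal{F},\mathbb{Q})$.

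The only non-routine point is the global martingale property of $Z$, which in general requires Novikov-type integrability on $h(X)$; here it is obtained for free from the non-explosiveness hypothesis on $h$ via the preceding Lemma \ref{lemm:lea}. Once that is secured, the remainder is a direct application of Girsanov's theorem and a straightforward substitution of $dW=dW'+h(X)ds$ into the defining equation of $X$.
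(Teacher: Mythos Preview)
Your proposal is correct and follows the standard route. Note that the paper itself does not supply a proof of this theorem: Section~\ref{section_1} explicitly recalls these results from \cite{DMU3,DMU4}, so there is no ``paper's own proof'' to compare against. Your argument---apply Lemma~\ref{lemm:lea} to secure the true martingale property of $Z$, then invoke the multidimensional Girsanov theorem and L\'evy's characterization, and finally substitute $dW=dW'+h(X)\,ds$ into \eqref{SDE_definition}---is precisely the expected proof and is how the cited references proceed.
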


\begin{theorem}[Rogers-Williams, V.27]
	Let us suppose that $\mathbb{P}_x $ solves the martingale problem for $(a,\mu)$ starting from $x $, that $h:\Omega \rightarrow \Real^n$ is a bounded previsible path functional and that $a=\sigma\sigma^T$ and $\mu$ are bounded. Then
	
\begin{equation}\label{supermartingale2}
\tilde{Z_t}:=\exp\left\{\int_{0}^{t}h(Xs)^TdM_s -\frac{1}{2}\int_{0}^{t}h(X_s)a(X_s)h(X_s)ds \right\}
\end{equation}	
is a $\mathbb{P}_x $ martingale with
\[
M_t=X_t-\int_{0}^{t}\mu(X_s)ds.
\]
Defining a measure $\mathbb{Q}$ on $(\Omega, \mathcal{H}) $ by
\[
\frac{d\mathbb{Q}}{d\mathbb{P}}|_{\mathcal{H}_{t}} \quad on \quad {\mathcal{H}_{t}},
\]
then $\mathbb{Q}$ solves the martingale problem for $(a,\mu^\prime)$ starting from $x $, where
\[
\mu^\prime=\mu+ah.
\]
\end{theorem}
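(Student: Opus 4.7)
\medskip

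The plan is to proceed in two halves: first establishing that $\tilde{Z}$ is a genuine $\mathbb{P}_x$-martingale so that the change of measure $\mathbb{Q}$ is consistently defined, then invoking a Girsanov-type computation inside the martingale problem to identify the transformed drift.

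For the first half, I would start by using the martingale problem for the coordinate test functions $g(x)=x^i$ to conclude that each component $M^i_t=X^i_t-\int_0^t\mu^i(X_s)ds$ is a continuous $\mathbb{P}_x$-local martingale, and then apply the martingale problem to $g(x)=x^ix^j$ together with integration by parts to identify the bracket $\langle M^i,M^j\rangle_t=\int_0^t a^{ij}(X_s)ds$. An application of It\^o's formula to the exponential in \eqref{supermartingale2} gives $d\tilde{Z}_t=\tilde{Z}_t\,h(X_t)^T dM_t$, so $\tilde{Z}$ is a positive continuous local martingale. The key step is then to promote it to a true martingale: since $h$ is bounded and $a$ is bounded, the quadratic variation of the exponent $\int_0^t h(X_s)^T a(X_s)h(X_s)\,ds$ is uniformly bounded on $[0,\mathcal{T}]$, which makes the Novikov condition $\mathbb{E}_{\mathbb{P}_x}[\exp(\tfrac{1}{2}\int_0^{\mathcal{T}}h^T a h\,ds)]<\infty$ trivial. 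This delivers the martingale property and lets one define $\mathbb{Q}$ consistently on $(\Omega,\mathcal{H})$ by the Kolmogorov-type consistency of the family $\{\tilde Z_t\}$.

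For the second half, I would apply the Girsanov--Meyer theorem for continuous semimartingales: since $d\langle M^i,\tilde{Z}\rangle_s=\tilde{Z}_s\,(ah)^i(X_s)\,ds$, the process
\[
\tilde{M}_t^i := M_t^i - \int_0^t (ah)^i(X_s)\,ds = X_t^i - \int_0^t\bigl(\mu^i+(ah)^i\bigr)(X_s)\,ds
\]
is a continuous $\mathbb{Q}$-local martingale with the same bracket $\int_0^t a^{ij}(X_s)ds$. To recover the full martingale problem under $\mathbb{Q}$, fix $g\in C^\infty(\R^n)$ and apply It\^o's formula to $g(X_t)$: using $dX^i=d\tilde{M}^i+(\mu^i+(ah)^i)(X_s)ds$ and the bracket identification, the It\^o decomposition reads
\[
g(X_t)-g(X_0)-\int_0^t L'g(X_s)\,ds \,=\, \int_0^t \partial_ig(X_s)\,d\tilde{M}_s^i,
\]
with $L'=\tfrac{1}{2}a^{ij}\partial_i\partial_j+(\mu^i+(ah)^i)\partial_i$. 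The right-hand side is a $\mathbb{Q}$-local martingale, and for $g$ with bounded derivatives (or after localisation by the hitting times of balls, together with the boundedness of $a$ and $\mu'$) it is a genuine $\mathbb{Q}$-martingale, which is exactly the martingale problem for $(a,\mu')$.

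I expect the hard part to be the promotion of $\tilde{Z}$ from a local to a true martingale, since without the boundedness of $h$, $\mu$ and $a$ one would otherwise need a more delicate localisation or a tailored non-explosion argument in the spirit of Lemma \ref{lemm:lea}; the boundedness hypotheses of the statement are precisely what make the Novikov criterion automatic and circumvent this difficulty. The remaining steps --- the bracket computation, the Girsanov shift, and the It\^o expansion --- are essentially bookkeeping once the change of measure is justified.
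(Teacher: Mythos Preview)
Your argument is correct and is essentially the standard proof one finds in Rogers--Williams; the paper itself does not supply a proof of this theorem but simply cites it as \emph{Rogers--Williams, V.27}, so there is nothing to compare against beyond noting that your Novikov-based shortcut is exactly the route the boundedness hypotheses are designed to enable.

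One small remark: when you invoke the martingale problem for the unbounded test functions $g(x)=x^i$ and $g(x)=x^ix^j$, strictly speaking the paper's Definition requires $C^g_t$ to be a genuine martingale for all $g\in C^\infty(\R^n)$, so you are entitled to use these directly; in other formulations (e.g.\ $g\in C^\infty_c$) one would first localise by the hitting times $\tau_r$ of balls and then pass to the limit using the boundedness of $a$ and $\mu$. Either way the bracket identification $\langle M^i,M^j\rangle_t=\int_0^t a^{ij}(X_s)\,ds$ goes through, and your Girsanov--Meyer step and the final It\^o expansion are clean.
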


We put together the previous stochastic transformations in the following natural way.
\begin{definition}[Stochastic transformation] Given two open subsets $M$ and $M'$ of $\mathbb{R}^n$, a diffeomorphism $\Phi \colon{M}\to{M'}$, a deterministic time change $f(t)$ and a random change of measure $h\colon{M}\to\mathbb{R}^m$, we call $T=(\Phi,\eta,h)$ a \emph{(weak finite) stochastic transformation}.
\end{definition}
In order to explicitly describe how the random transformation $T$ acts on the solution process we give the following definition.
\begin{definition}
	\label{defi:dea}
	Let $T=(\Phi,\eta,h)$ be a stochastic transformation. Let $X$ be a continuous stochastic process taking values in $M$ and $W$ be an $m$-dimensional Brownian motion in the space $(\Omega,\mathcal{F},\mathbb{P})$ such that the pair $(X,W)$ is a solution to the non explosive SDE $(\mu,\sigma)$. Given  two smooth non explosive functions $h$ and $f$ for the same SDE, we can define the process $P_T(X,W)=(P_T(X),P_T(W))$, where $P_T(X)$ takes values in $M'$ and $P_T(W)$ is a Brownian motion into the space $(\Omega,\mathcal{F},\mathbb{Q})$.
	The process components are given by
	\begin{align*}
	X'=P_T(X)&=\Phi(\mathcal{H}_{f_{-1}}), \notag \\
	W'=P_T(W)&=\tilde W, \notag \\
	\end{align*}
	where $\tilde W_t$ satisfies
	\begin{align*}
	d\tilde W_t&=\sqrt{\eta(t)}(dW_t-h(X_t)dt), \notag \\
	\end{align*}
	and
	\begin{align*}
	\frac{d\mathbb{Q}}{d\mathbb{P}}\Bigg|_{\mathcal{F}_{\mathcal{T}}}&=\exp\left(\int_0^\mathcal{T}h_j(X_s)dW_s^j-\frac{1}{2}\int_0^\mathcal{T}\sum_{j=1}^m(h_j(X_s))^2ds\right). \notag\\
	\end{align*}
	We call $P_T(X,W)$ the \emph{transformed process} of $(X,W)$ with respect to $T$ and we call the function $P_T$ the \emph{process transformation} associated with $T$.
\end{definition}

\begin{theorem}
	\label{theo:thea}
	Given a stochastic transformation $T=(\Phi,\eta,h)$ and a solution $(X,W)$ to the non explosive SDE $(\mu,\sigma)$ such that $E_T(\mu,\sigma)$ is non explosive, then $P_T(X,W)$ is solution to the SDE $E_T(\mu,\sigma)$.
\end{theorem}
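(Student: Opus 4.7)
The plan is to prove the statement by decomposing the stochastic transformation $T=(\Phi,\eta,h)$ into its three elementary components and applying successively the results already established in this section, namely Proposition \ref{Prop1} for the spatial diffeomorphism, Proposition \ref{Prop2} for the deterministic time change, and Theorem \ref{theo:theg} for the random change of measure. I would apply them in the order: first the measure change induced by $h$, then the time change $\eta$, and finally the diffeomorphism $\Phi$, tracking at each stage how the coefficients $(\mu,\sigma)$ of the SDE are modified.

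More concretely, starting from the solution $(X,W)$ to $(\mu,\sigma)$ on $(\Omega,\mathcal{F},\mathbb{P})$, Theorem \ref{theo:theg} yields that $(X,W')$ with $W'_t = W_t - \int_0^t h(X_s)\,ds$ is a solution to $(\mu+\sigma\cdot h,\sigma)$ under the new probability $\mathbb{Q}$ given by the Dol\'eans--Dade exponential. Then Proposition \ref{Prop2}, applied to this new solution on $(\Omega,\mathcal{F},\mathbb{Q})$, gives that after the time change $\mathcal{H}_{\eta}$ one obtains a solution to the SDE with coefficients $\bigl(\tfrac{1}{\eta}(\mu+\sigma\cdot h),\tfrac{1}{\sqrt{\eta}}\sigma\bigr)$; the Brownian motion $\tilde W_t$ defined in Definition \ref{defi:dea} is exactly the time-changed version of $W'$. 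Finally, Proposition \ref{Prop1} applied to the diffeomorphism $\Phi$ pushes the coefficients forward to
\[
\mu' = \Bigl(L_{\eta,h}(\Phi)\Bigr)\circ \Phi^{-1},\qquad \sigma' = \bigl(\nabla\Phi\cdot\tfrac{1}{\sqrt{\eta}}\sigma\bigr)\circ\Phi^{-1},
\]
where $L_{\eta,h}$ is the generator associated with the intermediate SDE. Composing these identifications defines $E_T(\mu,\sigma)$, and by construction $P_T(X,W)$ is its solution.

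The main technical obstacle is ensuring that each intermediate SDE in the chain is non-explosive so that the hypotheses of Theorem \ref{theo:theg} (used via Lemma \ref{lemm:lea}) and of Proposition \ref{Prop2} remain satisfied at every step; the assumption that both $(\mu,\sigma)$ and the final $E_T(\mu,\sigma)$ are non-explosive, combined with the definitions of non-explosive vector field and non-explosive time change, is precisely designed to make this work. A secondary point is to check that the order of application of the three elementary transformations does not affect the resulting SDE: while composition of a spatial diffeomorphism with a time change or a measure change commutes up to a reparametrization of $h$ and a rescaling of the coefficients, the definition of $P_T$ in Definition \ref{defi:dea} fixes one specific order, and $E_T$ must be defined consistently by the same order.

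Finally, I would verify the measurability and adaptedness issues: the Radon--Nikodym derivative is $\mathcal{F}_{\mathcal{T}}$-measurable and strictly positive by Lemma \ref{lemm:lea}, so $\mathbb{Q}\sim\mathbb{P}$ on $\mathcal{F}_{\mathcal{T}}$; the time-changed filtration is well defined because $f$ is a smooth increasing bijection of $[0,\mathcal{T}]$; and $\Phi$ being a diffeomorphism guarantees that the It\^o formula used in Proposition \ref{Prop1} applies pathwise to the continuous semimartingale $\mathcal{H}_{f^{-1}}(X)$. Putting these three successive transformations together yields that $P_T(X,W)$ solves the SDE $E_T(\mu,\sigma)$, as claimed.
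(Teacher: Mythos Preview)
Your approach is correct and is the natural one: decompose $T=(\Phi,\eta,h)$ into its three elementary constituents and apply, in the order fixed by Definition~\ref{defi:dea}, Theorem~\ref{theo:theg}, Proposition~\ref{Prop2} and Proposition~\ref{Prop1}. Note, however, that the paper does not actually supply a proof of Theorem~\ref{theo:thea}: at the beginning of Section~\ref{section_1} the authors explicitly state that ``for all the proofs see \cite{DMU3} and \cite{DMU4}'', and the theorem is recalled without argument. Your sketch is precisely the kind of proof one finds in those references, so in substance it matches the intended argument.

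One small refinement: you worry about non-explosion of the \emph{intermediate} SDEs, but the hypotheses only give non-explosion of $(\mu,\sigma)$ and of $E_T(\mu,\sigma)$. In your chosen order the first intermediate SDE is $(\mu+\sigma\cdot h,\sigma)$, whose non-explosion is exactly the requirement that $h$ be a non-explosive vector field (needed for Lemma~\ref{lemm:lea}); after the deterministic time change, non-explosion is preserved because $f$ is a smooth increasing bijection of $[0,\mathcal{T}]$ onto a bounded interval, so no extra assumption is needed there; and the final diffeomorphism $\Phi$ cannot introduce explosion. So the chain does go through under the stated hypotheses, but it is worth making explicit that ``$E_T(\mu,\sigma)$ non-explosive'' is what closes the argument rather than separate assumptions on each intermediate step.
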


\section{Geometric setting}
In this section we present the main ideas of Lie group theory underlying our set of stochastic transformations of autonomous SDEs. In this setting, the important feature
of Lie Groups is that they have the structure of a differential manifold and, in particular, their elements can vary continuously.
\begin{definition}[Lie group]
An $r$-parameters Lie Group is a group G with the structure of an $r$-dimensional manifold such that the group operation
\[
m:G\times G \rightarrow G, \quad m(g_1,g_2)=g_1 \cdot g_2, \quad g_1,g_2 \in G
\]
and the inversion
\[
i:G \rightarrow G, \quad i(g)=g^{-1}, \quad g\in G
\]
are smooth functions between manifolds.
\end{definition}
Very often, as in our case, Lie groups naturally arise as transformations groups between manifolds.
Let $G=\mathbb{R}_+\times\mathbb{R}^m$ be the group of traslations with a scaling factor, whose elements $g=(\eta,h)$ can be identified with the matrices
\[
\begin{pmatrix}
\sqrt{\eta} & h \\
0 & 1
\end{pmatrix}.
\]
Given the trivial principal bundle $\pi\colon{M\times{G}}\to{M}$ with structure group $G$, we can define the action of $G$ on $M\times{G}$ as
\begin{align}
R_{g_2}\colon{M\times{G}}&\to{M\times{G}} \notag \\
(x,g_1)&\mapsto(x,g_1\cdot{g_2}). \notag
\end{align}
which leaves $M$ invariant, with the standard product in $G$ given by $g_1\cdot{g_2}=(\eta_1,h_1)\cdot(\eta_2,h_2)=(\eta_1\eta_2,\sqrt{\eta_1}h_2+h_1)$.
Considering a second trivial principal bundle $\pi'\colon{M'\times{G}}\to{M'}$, we say that a diffeomorphism $F\colon{M\times{G}}\to{M'\times{G}}$ is an \emph{isomorphism} if $F$ preserves the structures of principal bundles of both $M\times{G}$ and $M'\times{G}$, i.e. there exists a diffeomorphism $\Phi\colon{M}\to{M'}$ such that for any $g\in{G}$
\begin{align}
\pi'\circ F&=\Phi\circ\pi, \notag \\
F\circ{R_g}&=R_g\circ{F}. \notag \\
\end{align}

We notice that such an isomorphism is completely determined by its value on $(x,e)$ (where $e$ is the unit element of $G$). Therefore there is a natural identification between a stochastic transformation $T=(\Phi,\eta,h)$ and the isomorphism $F_T$ such that $F_T(x,e)=(\Phi(x),g)$, where $g=(\eta,h)$.

The next Theorem provides the explicit form of the composition of two stochastic transformations in this group setting.
\begin{theorem}[Composition law]
Let   $T_1=(\Phi_1,\eta_1,h_1)$ and $T_2=(\Phi_2,\eta_2,h_2)$ be two stochastic transformations. Then the composition $T_2\circ T_1$ is defined as the stochastic transformation
\[
T_2\circ{T_1}=\Bigr(\Phi_2\circ\Phi_1,(\eta_2\circ\Phi_1)\eta_1,\frac{1}{\sqrt{\eta_1}}\cdot(h_2\circ\Phi_1)+h_1\Bigl),
\]
and the inverse transformation of $T=(\Phi,\eta,h)$ can be expressed as
\begin{equation}
\label{inversetransformation}
T^{-1}=\Bigr(\Phi^{-1},(\eta\circ\Phi^{-1})^{-1},-\frac{1}{\sqrt{\eta}}({h}\circ\Phi^{-1})\Bigl).
\end{equation}	
\end{theorem}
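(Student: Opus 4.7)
The plan is to leverage the identification, established immediately before the theorem, of stochastic transformations with isomorphisms of the trivial principal bundle $M\times G$. Each $T=(\Phi,\eta,h)$ corresponds to a unique bundle isomorphism $F_T\colon M\times G\to M'\times G$ determined by $F_T(x,e)=(\Phi(x),(\eta(x),h(x)))$ together with $G$-equivariance. Since the composition of two bundle isomorphisms is again a bundle isomorphism (and respects the underlying base map), $F_{T_2}\circ F_{T_1}$ is of the form $F_{T_2\circ T_1}$ for a unique stochastic transformation $T_2\circ T_1$, so the whole task reduces to reading off its three components from a group-theoretic calculation.

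The concrete computation proceeds in two steps. First, I make the equivariance explicit: applying $F_T\circ R_{g_0}=R_{g_0}\circ F_T$ to $(x,e)$ one obtains $F_T(x,g_0)=(\Phi(x),(\eta(x),h(x))\cdot g_0)$. Evaluating at $(x,e)$ then gives
\begin{equation*}
(F_{T_2}\circ F_{T_1})(x,e)=F_{T_2}(\Phi_1(x),(\eta_1(x),h_1(x)))=\bigl(\Phi_2(\Phi_1(x)),\,(\eta_2(\Phi_1(x)),h_2(\Phi_1(x)))\cdot(\eta_1(x),h_1(x))\bigr).
\end{equation*}
The first coordinate is $(\Phi_2\circ\Phi_1)(x)$. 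For the second, I evaluate the $G$-product using the matrix model $g=(\eta,h)\leftrightarrow\bigl(\begin{smallmatrix}\sqrt{\eta}&h\\0&1\end{smallmatrix}\bigr)$; tracking the evaluation points ($\eta_1,h_1$ at $x$ and $\eta_2,h_2$ at $\Phi_1(x)$) delivers the scaling factor $(\eta_2\circ\Phi_1)\eta_1$ and the translation $\frac{1}{\sqrt{\eta_1}}(h_2\circ\Phi_1)+h_1$ claimed in the statement.

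For the inverse, I would either invert the matrix representation pointwise or impose the equation $T\circ T^{-1}=(\mathrm{id},1,0)$ and solve componentwise using the composition law just established. The matrix approach immediately produces the candidate $T^{-1}=\bigl(\Phi^{-1},(\eta\circ\Phi^{-1})^{-1},-\tfrac{1}{\sqrt{\eta}}(h\circ\Phi^{-1})\bigr)$; a short verification using the composition formula (with $T_1=T^{-1}$ and $T_2=T$) confirms that $T\circ T^{-1}$, and by a symmetric calculation $T^{-1}\circ T$, both reduce to the identity triple.

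The one piece of bookkeeping that demands real care is the evaluation-point tracking: $\eta_1,h_1$ naturally live on $M$ while $\eta_2,h_2$ live on $M'$, so the latter must be pulled back along $\Phi_1$ before the group product can be performed, and an analogous remark applies to the formula for $T^{-1}$. Beyond this, closure of the construction needs to be checked, namely that $T_2\circ T_1$ really is a stochastic transformation in the sense of the preceding definition: smoothness and positivity of the new scaling component are clear from elementary closure properties of smooth positive functions, the new $h$-component is smooth by composition and multiplication, and $\Phi_2\circ\Phi_1$ is a diffeomorphism as a composition of diffeomorphisms; the non-explosiveness conditions required by Theorem~\ref{theo:thea} are to be imposed when the transformations are actually applied to a solution process rather than at the purely algebraic level.
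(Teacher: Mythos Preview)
Your approach via the bundle-isomorphism identification is genuinely different from the paper's: the paper argues directly on the Brownian differential, writing $dW'_t=\sqrt{\eta_1}(dW_t-h_1\,dt)$ and then $dW''_t=\sqrt{\eta_2}(dW'_t-h_2\,dt)$, expanding, and reading off $\tilde\eta=\eta_1\eta_2$ and $\tilde h=h_1+h_2/\sqrt{\eta_1}$; the evaluation at $\Phi_1(X)$ for $\eta_2,h_2$ is then justified by the remark that after $T_1$ the state variable has become $\Phi_1(X)$. The inverse is handled ``in the same way.'' So the paper's argument is a two-line probabilistic computation rather than a geometric one.

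Your geometric route, however, has a real gap at the crucial step. Following your own setup, equivariance gives
\[
(F_{T_2}\circ F_{T_1})(x,e)=\bigl(\Phi_2(\Phi_1(x)),\,g_2(\Phi_1(x))\cdot g_1(x)\bigr),
\]
and with the paper's stated group law $g_a\cdot g_b=(\eta_a\eta_b,\sqrt{\eta_a}\,h_b+h_a)$ (equivalently the matrix model $\bigl(\begin{smallmatrix}\sqrt{\eta}&h\\0&1\end{smallmatrix}\bigr)$) this product equals
\[
\bigl((\eta_2\circ\Phi_1)\,\eta_1,\ \sqrt{\eta_2\circ\Phi_1}\;h_1+h_2\circ\Phi_1\bigr),
\]
whose $h$-component is \emph{not} $\frac{1}{\sqrt{\eta_1}}(h_2\circ\Phi_1)+h_1$. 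Reversing the order of the factors does not help either: $g_1(x)\cdot g_2(\Phi_1(x))$ yields $\sqrt{\eta_1}(h_2\circ\Phi_1)+h_1$. So the sentence ``tracking the evaluation points \ldots delivers \ldots the translation $\frac{1}{\sqrt{\eta_1}}(h_2\circ\Phi_1)+h_1$'' is simply false as written. The mismatch is that the matrix representing the \emph{action on the noise} is $\bigl(\begin{smallmatrix}\sqrt{\eta}&-\sqrt{\eta}\,h\\0&1\end{smallmatrix}\bigr)$, not the paper's displayed $\bigl(\begin{smallmatrix}\sqrt{\eta}&h\\0&1\end{smallmatrix}\bigr)$; only with the former does left-multiplication by the $T_2$-matrix reproduce the theorem's formula. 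If you want to salvage the geometric argument you must either change the matrix model to this one, or else argue directly at the level of $dW\mapsto\sqrt{\eta}(dW-h\,dt)$ as the paper does---which is in any case shorter.
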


\begin{proof}
Applying the first random change of measure together with the deterministic time transformation to the Brownian motion we get
\[
dW^\prime_t=\sqrt{\eta_1}(dW_t-h_1dt);
\]
now if we apply the second transformation, we obtain
\[
dW^{\prime\prime}_t=\sqrt{\eta_2}(dW^\prime_t-h_2dt)=\sqrt{\tilde\eta}(dW_t-\tilde{\eta}dt),
\]
where $\tilde{\eta}=\eta_1\eta_2$ and $\tilde{h}=h_1+\frac{h_2}{\sqrt{\eta_1}}$.
Since after the SDE transformation by $T_1$ the state variable is $\Phi_1(X)$ and since $T_2$ acts on the new variable $\Phi_1(X)$, both $h_2$ and $\eta_2$ depend on the actual value of the process, that is $h_2(\Phi_1(X))$ and $\eta_2(\Phi_1(X))$.
In the same way we can compute the explicit form of  the inverse transformation.
\end{proof}

The following theorem shows the notable probabilistic counterpart in terms of SDE and process transformation of the above geometric identification.
\begin{theorem}
	\label{theo:tha}
	Let $T_1$ and $T_2$ be two stochastic transformations, let $(\mu,\sigma)$ be a non explosive SDE such that $E_{T_1}(\mu,\sigma)$ and $E_{T_2}(E_{T_1}(\mu,\sigma))$ are non explosive and let $(X,W)$ be a solution to the SDE $(\mu,\sigma)$ on the probability space $(\Omega,\mathcal{F},\mathbb{P})$. Then, on the probability space $(\Omega,\mathcal{F},\mathbb{Q})$, we have
	\begin{align}
	P_{T_2}(P_{T_1}(X,W))&=P_{T_2\circ{T_1}}(X,W), \notag \\
	E_{T_2}(E_{T_1}(\mu,\sigma))&=E_{T_2\circ{T_1}}(\mu,\sigma). \notag
	\end{align}
\end{theorem}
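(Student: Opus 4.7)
The plan is to unwind both identities using Definition 3.9 (process transformation) and Theorem 3.10 (SDE transformation), and to exploit the composition law for stochastic transformations established in the preceding theorem. Since that law already prescribes the form of $T_2\circ T_1$, the task reduces to checking that a two-step application of $P_T$ (resp.\ $E_T$) reproduces this composed action component by component. The spatial part is immediate from functoriality of push-forward, yielding $\Phi_2\circ\Phi_1$, once one commutes the (deterministic) time reparametrization through $\Phi_1$ and identifies the composed time rate as $\tilde\eta=\eta_1\cdot(\eta_2\circ\Phi_1)$ via the chain rule applied to $f_2\circ f_1$.

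The heart of the argument concerns the transformed Brownian motion and the associated Radon-Nikodym density. Applying $T_1$ produces $dW'=\sqrt{\eta_1}(dW-h_1(X)\,dt)$ on $(\Omega,\mathcal{F},\mathbb{Q}_1)$ with density $Z_1=\exp\!\left(\int h_1(X_s)\,dW_s-\tfrac12\int h_1(X_s)^2\,ds\right)$. Applying $T_2$ to the transformed pair then yields $dW''=\sqrt{\eta_2(\Phi_1(X))}\,(dW'-h_2(\Phi_1(X))\,dt)$ together with a further density $Z_2$ built from $h_2(\Phi_1(X))$ against $W'$ under $\mathbb{Q}_1$. Substituting the first relation into the second and expanding gives, after elementary algebra, $dW''=\sqrt{\tilde\eta}\,(dW-\tilde h\,dt)$ with $\tilde h=h_1+(h_2\circ\Phi_1)/\sqrt{\eta_1}$, and the product $Z_1\,Z_2$ rewrites as $\exp\!\left(\int \tilde h(X_s)\,dW_s-\tfrac12\int\tilde h(X_s)^2\,ds\right)$: exactly the Brownian-motion rule and Radon-Nikodym density that Definition 3.9 assigns to $T_2\circ T_1$, which establishes the first identity.

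For the SDE identity, once process equality is in hand I would invoke Theorem 3.10 twice on the left and once on the right: $P_{T_2}(P_{T_1}(X,W))$ solves $E_{T_2}(E_{T_1}(\mu,\sigma))$, while $P_{T_2\circ T_1}(X,W)$ solves $E_{T_2\circ T_1}(\mu,\sigma)$. Since the two processes coincide and the drift and diffusion coefficients are recovered as continuous functions of the state through the martingale-problem formulation (cf.\ Propositions 3.2 and 3.4 and Theorem 3.7), matching the two martingale problems forces the equality of SDEs. Alternatively, one can verify it by direct computation, composing the explicit coefficient formulas of those propositions and applying the chain rule to $\Phi_2\circ\Phi_1$.

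The main obstacle I anticipate is the bookkeeping when the state-dependent rate $\eta_2$ and drift $h_2$ interact with the first transformation: $T_2$ acts on the already-transformed process $\Phi_1(X)$ at the reparametrized time, so these coefficients must be evaluated along the new flow, under the new measure $\mathbb{Q}_1$, and converted back to stochastic integrals against the original $(\mathbb{P},W)$-integrator. Carefully tracking which measure, which time variable, and which integrator each quantity lives against is the delicate point that produces both the factor $\eta_2\circ\Phi_1$ in $\tilde\eta$ and the $1/\sqrt{\eta_1}$ rescaling of $h_2\circ\Phi_1$ in $\tilde h$.
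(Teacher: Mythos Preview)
The paper does not actually supply a separate proof of this theorem: it is stated immediately after the Composition law theorem as its ``notable probabilistic counterpart'' and left unproved. The essential computation you carry out---substituting $dW'=\sqrt{\eta_1}(dW-h_1\,dt)$ into $dW''=\sqrt{\eta_2}(dW'-h_2\,dt)$ to obtain $dW''=\sqrt{\tilde\eta}(dW-\tilde h\,dt)$ with $\tilde\eta=\eta_1\cdot(\eta_2\circ\Phi_1)$ and $\tilde h=h_1+(h_2\circ\Phi_1)/\sqrt{\eta_1}$---is exactly the argument the paper gives in the proof of the Composition law, including the remark that $h_2$ and $\eta_2$ must be evaluated at $\Phi_1(X)$. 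So your approach to the process identity is faithful to the paper's own reasoning.

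Your handling of the SDE identity (invoking Theorem~\ref{theo:thea} twice on the left, once on the right, and matching coefficients via the martingale problem, or alternatively composing the explicit formulas of Propositions~\ref{Prop1} and~\ref{Prop2} and Theorem~\ref{theo:theg}) goes a step beyond what the paper writes down and is a sound way to close the argument. The caution you flag in your last paragraph---keeping track of which measure, which time, and which integrator each object lives against when converting the $\mathbb{Q}_1$-stochastic integral defining $Z_2$ back into a $\mathbb{P}$-integral against $W$---is indeed the only delicate point, and it is precisely where the $1/\sqrt{\eta_1}$ factor and the cross term in $\tfrac12\tilde h^2$ arise; your sketch handles it correctly.
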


Since the set of our stochastic transformations forms  a group with respect to the composition $\circ$, one can introduce the one parameter group $T_\lambda=(\Phi_\lambda,\eta_\lambda,h_\lambda)$ and the corresponding \emph{infinitesimal (general) transformation} $V=(Y,\tau,H)$ obtained in the usual way
\begin{align}
Y(x)=&\partial_\lambda(\Phi_\lambda(x))\vert_{\lambda=0} \notag \\
\tau(x)=&\partial_\lambda(\eta_\lambda(x))\vert_{\lambda=0} \notag \\
H(x)=&\partial_\lambda(h_\lambda(x))\vert_{\lambda=0}, \notag
\end{align}
where $Y$ is a vector field on $M$, $\tau\colon{M}\to\mathbb{R}$ and $H\colon{M}\to\mathbb{R}^m$ are smooth functions. If $V$ is of the form $V=(Y,0,,0)$ we call $V$ a \emph{strong infinitesimal stochastic  transformation}.

\begin{proposition}[Flow reconstruction]\label{flow}
Let $V=(Y,\tau,H)$ be an \emph{infinitesimal stochastic transformation}. Then we can reconstruct the one parameter group $T_\lambda$ choosing $\Phi_\lambda$,  $\eta_\lambda$ and $h_\lambda $ as the one parameter solutions to the following system
\begin{align}
\partial_\lambda(\Phi_\lambda(x))=&Y(\Phi_\lambda(x)) \notag \\
\partial_\lambda(\eta_\lambda(x))=&\tau(\Phi_\lambda(x))\eta_\lambda(x) \notag \\
\partial_{\lambda}(h_{\lambda}(x))=&\frac{1}{\sqrt{\eta_{\lambda}}}{H(\Phi_{\lambda}(x))}, \notag
\end{align}
with initial condition $\Phi_0=id_M, \eta_0=1$ and $h_0(x)=0$.	
\end{proposition}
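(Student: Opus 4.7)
The plan is to derive the three ODEs by exploiting the one-parameter group law $T_{\lambda+\mu} = T_\mu \circ T_\lambda$ together with the explicit composition formula from the preceding theorem, and then invoke standard ODE theory to reconstruct $T_\lambda$ from $V=(Y,\tau,H)$.

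First I would write the group law componentwise. Since $T_\mu \circ T_\lambda = (\Phi_\mu \circ \Phi_\lambda,\; (\eta_\mu\circ\Phi_\lambda)\eta_\lambda,\; \tfrac{1}{\sqrt{\eta_\lambda}}(h_\mu\circ\Phi_\lambda)+h_\lambda)$, identifying this with $T_{\lambda+\mu}$ gives
\begin{align*}
\Phi_{\lambda+\mu}(x) &= \Phi_\mu(\Phi_\lambda(x)),\\
\eta_{\lambda+\mu}(x) &= \eta_\mu(\Phi_\lambda(x))\,\eta_\lambda(x),\\
h_{\lambda+\mu}(x) &= \tfrac{1}{\sqrt{\eta_\lambda(x)}}\,h_\mu(\Phi_\lambda(x)) + h_\lambda(x).
\end{align*}
Differentiating each relation with respect to $\mu$ at $\mu=0$ and using the definition of $V=(Y,\tau,H)$ as well as $\Phi_0=\mathrm{id}_M,\eta_0=1,h_0=0$ yields precisely the three ODEs in the statement, together with the prescribed initial data at $\lambda=0$. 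This establishes the \emph{necessity} of the system.

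For the reconstruction (sufficiency), I would solve the system in cascade. The first equation is an autonomous ODE for the flow of $Y$, so by classical ODE theory it admits a unique smooth local solution $\Phi_\lambda$. Once $\Phi_\lambda$ is known, the second equation becomes a linear ODE in $\eta_\lambda$ with smooth, bounded coefficient $\lambda\mapsto\tau(\Phi_\lambda(x))$, solved explicitly by $\eta_\lambda(x)=\exp\bigl(\int_0^\lambda\tau(\Phi_s(x))ds\bigr)$; in particular $\eta_\lambda>0$, so the $\sqrt{\eta_\lambda}$ in the third equation is well defined, and $h_\lambda$ is then obtained by direct integration of the right-hand side from $0$ to $\lambda$.

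The main subtlety — and the step I would treat most carefully — is checking that the triple $(\Phi_\lambda,\eta_\lambda,h_\lambda)$ constructed from the ODEs really defines a one-parameter subgroup of stochastic transformations, i.e.\ that $T_{\lambda+\mu}=T_\mu\circ T_\lambda$ holds for the reconstructed family and not only infinitesimally. This is the standard flow-reconstruction argument: fixing $\lambda$, both sides of the group law, viewed as functions of $\mu$, satisfy the same ODE system (obtained by differentiating the composition law in $\mu$ and using the original ODEs at the point $\Phi_\lambda(x)$) with the same initial condition at $\mu=0$; uniqueness for ODEs with smooth coefficients then forces equality. Once the group law is secured, differentiating it at $\mu=0$ reproduces $V$, so the reconstructed $T_\lambda$ has $V$ as its infinitesimal generator, completing the proof.
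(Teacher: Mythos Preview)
Your proposal is correct and follows essentially the same approach as the paper: both derive the ODEs by writing the group law $T_{\lambda+\mu}=T_\mu\circ T_\lambda$ componentwise via the composition formula and differentiating in the outer parameter at $0$. You are in fact somewhat more thorough than the paper, which only checks the necessity direction for $\eta_\lambda$ and $h_\lambda$ and does not spell out the cascade integration or the uniqueness argument for the reconstructed group law.
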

\begin{proof}
We prove only the equation for $\eta$ and $h$. The equation for $\Phi_\lambda $ is true by definition of $Y$. By the composition law in Theorem \ref{theo:tha} and by the properties of the flow we have
\begin{align}
\eta_{\lambda_1+\lambda_2}(x)=&{\eta_{\lambda_1}}(\Phi_{\lambda_2}(x))\eta_{\lambda_2}(x)\notag \\
\partial_{\lambda_1}(\eta_{\lambda_1+\lambda_2}(x))=&\partial_{\lambda_1}(\eta_{\lambda_1}(\Phi_{\lambda_2}(x))\eta_{\lambda_2}(x)) \notag \\
\partial_{\lambda_1}(\eta_{\lambda_1+\lambda_2}(x))\vert_{\lambda_1=0}=&\tau (\Phi_{\lambda_2}(x))\eta_{\lambda_2}(x), \notag
\end{align}
with initial condition $\eta_0(x)=1$

In the same way we obtains that $h_{\lambda}$ satifies
\begin{align}
h_{\lambda_1+\lambda_2}(x)=&\frac{1}{\sqrt{\eta_{\lambda_2}}}{h_{\lambda_1}}(\Phi_{\lambda_2}(x))+h_{\lambda_2}(x) \notag \\
\partial_{\lambda_1}(h_{\lambda_1+\lambda_2}(x))=&\frac{1}{\sqrt{\eta_{\lambda_2}}}\partial_{\lambda_1}(h_{\lambda_1}(\Phi_{\lambda_2}(x))) \notag \\
\partial_{\lambda_1}(h_{\lambda_1+\lambda_2}(x))\vert_{\lambda_1=0}=&\frac{1}{\sqrt{\eta_{\lambda_2}}}{H(\Phi_{\lambda_2}(x))}, \notag
\end{align}
with initial condition $h_0(x)=0$.
\end{proof}
If the time change is deterministic, as in the present paper, there is a function $m$ such that
\[
m=L(\tau).
\]
Indeed in this case time transformation has a closed form: if $f_{\lambda} $ solves the equation
\[
\partial_{\lambda} f_{\lambda}=m(f_{\lambda}),\
f_0=0
\]
then
\begin{equation}\label{deterministictimechange}
\int_{0}^{t}\eta_{\lambda}(x_s,s)ds=f_{\lambda}(X_t,t).
\end{equation}
Finally we introduce the relevant notion of symmetry of a SDE.
\begin{definition}[Finite and infinitesimal symmetry]\label{symmetry_definition} A stochastic transformation $T$ is a \emph{(finite weak) symmetry} of a non explosive SDE $(\mu,\sigma)$ if, for every solution process $(X,W)$, $P_T(X,W)$ is a solution process to the same SDE.
	An infinitesimal stochastic transformation $V$ generating a one parameter group $T_\lambda$ is called an \emph{infinitesimal (general) symmetry} of the non explosive SDE $(\mu,\sigma)$ if $T_\lambda$ is a symmetry of $(\mu,\sigma)$.
\end{definition}
\begin{proposition}
	\label{prop:pra}
	A stochastic transformation $T=(\Phi,\eta,h)$ is a symmetry of the non explosive SDE $(\mu,\sigma)$ if and only if
	\begin{align}
	\Bigr(\frac{1}{\eta}[L(\Phi)+\nabla\Phi\cdot\sigma\cdot{h}]\Bigl)\circ\Phi^{-1}&=\mu \notag \\
	\Bigr(\frac{1}{\sqrt{\eta}}\nabla\Phi\cdot\sigma \Bigl)\circ\Phi^{-1}&=\sigma \notag
	\end{align}
\end{proposition}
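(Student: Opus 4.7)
The plan is to reduce the symmetry condition to an algebraic identity between the SDE coefficients $E_T(\mu,\sigma)$ and $(\mu,\sigma)$, and then to compute $E_T(\mu,\sigma)$ explicitly by decomposing $T$ into its three elementary components.

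By Theorem \ref{theo:thea}, for every solution $(X,W)$ of the non-explosive SDE $(\mu,\sigma)$ the transformed process $P_T(X,W)$ solves $E_T(\mu,\sigma)$. Thus $T$ is a symmetry, in the sense of Definition \ref{symmetry_definition}, precisely when $E_T(\mu,\sigma)=(\mu,\sigma)$ as pairs of smooth functions on $M=M'$. The ``if'' direction is immediate. For the ``only if'' direction, since $P_T(X,W)$ solves both $(\mu,\sigma)$ and $E_T(\mu,\sigma)$, comparing the bounded variation and the quadratic variation parts of the two It\^o decompositions yields equality of the coefficients along every trajectory; by varying the initial distribution so that the trajectories cover $M'$, and using continuity of the coefficients, one obtains pointwise equality on $M'$.

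To compute $E_T$ I would factor $T=(\Phi,1,0)\circ(\mathrm{id},\eta,0)\circ(\mathrm{id},1,h)$, a direct verification from the composition law, and use Theorem \ref{theo:tha} to split $E_T$ accordingly. Iterated application of Theorem \ref{theo:theg}, Proposition \ref{Prop2} and Proposition \ref{Prop1} then produces
\[
(\mu,\sigma)\;\longmapsto\;(\mu+\sigma\cdot h,\sigma)\;\longmapsto\;\Bigl(\frac{\mu+\sigma h}{\eta},\frac{\sigma}{\sqrt{\eta}}\Bigr)\;\longmapsto\;E_T(\mu,\sigma),
\]
where the last arrow applies the diffeomorphism $\Phi$ to the intermediate SDE, whose infinitesimal generator is $L'=\frac{1}{\eta}\bigl(L+(\sigma h)^i\partial_i\bigr)$. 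The identity $L'(\Phi)=\frac{1}{\eta}\bigl[L(\Phi)+\nabla\Phi\cdot\sigma\cdot h\bigr]$, combined with Proposition \ref{Prop1}, yields exactly the two equations in the statement.

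The main obstacle I anticipate is purely bookkeeping: I must verify that the ordering of the three elementary operations used in the factorization matches the one hard-coded into Definition \ref{defi:dea}, that the generator $L$ is updated consistently at each step (so that the outer composition with $\Phi^{-1}$ appears only at the last stage), and that the non-explosiveness assumptions required to apply Theorem \ref{theo:thea} and its three ingredients are inherited at each intermediate SDE. Getting the Jacobians and the Einstein indices right requires some care, but no genuinely new analytic input is needed.
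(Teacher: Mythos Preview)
The paper does not include a proof of Proposition \ref{prop:pra}; as announced at the beginning of Section \ref{section_1}, proofs for this class of results are deferred to \cite{DMU3} and \cite{DMU4}. Your argument is therefore not directly comparable to anything in the text, but it is the natural one and is correct: the factorization $T=(\Phi,1,0)\circ(\mathrm{id},\eta,0)\circ(\mathrm{id},1,h)$ matches the composition law and the order of operations in Definition \ref{defi:dea}, and chaining Theorem \ref{theo:theg}, Proposition \ref{Prop2} and Proposition \ref{Prop1} yields exactly the two displayed equations. The only point worth tightening is the ``only if'' direction: your argument that equality of coefficients follows from equality of the It\^o decompositions along trajectories, together with variation of the initial distribution, is fine in spirit, but in the setting of this paper $\eta$ is a deterministic time change and $\Phi$ may act on the extended space $(x,z)$, so you should be explicit that the coefficients are being compared on the full domain (including the time coordinate) and that continuity plus the freedom in the initial law suffices to pass from equality along trajectories to pointwise equality.
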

Next theorem provides the \emph{general determining equations} satisfied by the infinitesimal symmetries of a SDE $(\mu, \sigma)$.
\begin{theorem}[[Determing equations]\label{TheoDetEq}
	An infinitesimal stochastic transformation $V=(Y,\tau,H)$ is an infinitesimal symmetry of the non explosive SDE $(\mu,\sigma)$ if and only if $V$ generates a one parameter group defined on $M$ and the following equations hold
	\begin{align}
	\label{equa:eqa}
	Y(\mu)-L(Y)-\sigma\cdot{H}+\tau\mu&=0 \\
	\label{equa:eqb}
	[Y,\sigma]+\frac{1}{2}\tau\sigma&=0.
	\end{align}
\end{theorem}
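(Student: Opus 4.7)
I will deduce the determining equations by differentiating at $\lambda=0$ the two finite-symmetry conditions of Proposition~\ref{prop:pra}, applied to the one-parameter group $T_\lambda=(\Phi_\lambda,\eta_\lambda,h_\lambda)$ reconstructed from $V=(Y,\tau,H)$ via Proposition~\ref{flow}. For the converse I rely on a standard Lie-theoretic argument using the functoriality $E_{T_2\circ T_1}=E_{T_2}\circ E_{T_1}$ of Theorem~\ref{theo:tha}: the infinitesimal equations are precisely what says that $(\mu,\sigma)$ is a stationary point of the flow $\lambda\mapsto E_{T_\lambda}(\mu,\sigma)$ on SDE-coefficient space, so by uniqueness of ODE trajectories $E_{T_\lambda}(\mu,\sigma)\equiv(\mu,\sigma)$.

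\textbf{Forward direction.} Assume every $T_\lambda$ is a symmetry. By Proposition~\ref{prop:pra} the identities
\[
\bigl(\tfrac{1}{\eta_\lambda}[L(\Phi_\lambda)+\nabla\Phi_\lambda\cdot\sigma\cdot h_\lambda]\bigr)\circ\Phi_\lambda^{-1}=\mu,\qquad \bigl(\tfrac{1}{\sqrt{\eta_\lambda}}\nabla\Phi_\lambda\cdot\sigma\bigr)\circ\Phi_\lambda^{-1}=\sigma
\]
hold for every $\lambda$, so their $\lambda$-derivatives at $0$ vanish. Proposition~\ref{flow} gives $\partial_\lambda\Phi_\lambda|_0=Y$, $\partial_\lambda\eta_\lambda|_0=\tau$, $\partial_\lambda h_\lambda|_0=H$, and differentiating $\Phi_\lambda\circ\Phi_\lambda^{-1}=\mathrm{id}$ yields $\partial_\lambda\Phi_\lambda^{-1}|_0=-Y$. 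Using $\Phi_0=\mathrm{id}$, $\eta_0=1$, $h_0=0$ (so $L(\Phi_0)=\mu$ and $\nabla\Phi_0=I$), the chain rule applied to the second identity gives $\nabla Y\cdot\sigma-Y(\sigma)-\tfrac{1}{2}\tau\sigma=0$, which is \eqref{equa:eqb} once one recognises $[Y,\sigma]=Y(\sigma)-\nabla Y\cdot\sigma$; the analogous computation on the first identity gives $L(Y)+\sigma H-Y(\mu)-\tau\mu=0$, i.e.\ \eqref{equa:eqa}.

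\textbf{Converse direction.} Assume \eqref{equa:eqa}--\eqref{equa:eqb} and that the ODEs of Proposition~\ref{flow} integrate to a one-parameter group $T_\lambda$ on $M$. Set $\Psi(\lambda):=E_{T_\lambda}(\mu,\sigma)$; Theorem~\ref{theo:tha} and the composition law give $\Psi(\lambda_0+\epsilon)=E_{T_\epsilon}(\Psi(\lambda_0))$, so $\partial_\lambda\Psi(\lambda_0)$ equals the infinitesimal action of $V$ on the SDE $\Psi(\lambda_0)$, whose vanishing is precisely the determining equations with $(\mu,\sigma)$ replaced by $\Psi(\lambda_0)$. When $\Psi(\lambda_0)=(\mu,\sigma)$ these reduce to \eqref{equa:eqa}--\eqref{equa:eqb} and vanish by hypothesis, so $(\mu,\sigma)$ is a stationary point of the smooth flow $\Psi$ on coefficient space; combined with $\Psi(0)=(\mu,\sigma)$, uniqueness yields $\Psi\equiv(\mu,\sigma)$, and Proposition~\ref{prop:pra} then certifies that each $T_\lambda$ is a symmetry.

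\textbf{Main obstacle.} The principal book-keeping issue is the correct differentiation of expressions of the form $A_\lambda\circ\Phi_\lambda^{-1}$: one must separate the $\lambda$-dependence of $A_\lambda$ from the $\lambda$-dependence of the evaluation point, and the second contribution (through $\partial_\lambda\Phi_\lambda^{-1}|_0=-Y$) is precisely what produces the terms $-Y(\mu)$ and $-Y(\sigma)$ that combine with $\nabla Y\cdot\sigma$ into the Lie bracket $[Y,\sigma]$. A secondary subtlety is rigorously justifying the ODE-uniqueness step in the converse at the level of coefficient functions, but in the autonomous smooth setting this follows from the Lipschitz dependence of the infinitesimal action of $V$ on $(\mu,\sigma)$ together with the smoothness of $Y,\tau,H$ and of the reconstructed flow $T_\lambda$.
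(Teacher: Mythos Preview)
The paper does not supply its own proof of this theorem: at the start of Section~\ref{section_1} the authors state that ``for all the proofs see \cite{DMU3} and \cite{DMU4}'', and indeed Theorem~\ref{TheoDetEq} is recalled without argument. There is therefore no in-paper proof to compare against.

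That said, your approach is the standard one and is essentially correct. The forward direction is clean: differentiating the two identities of Proposition~\ref{prop:pra} at $\lambda=0$ and carefully separating the $\lambda$-dependence of the integrand from that of the evaluation point $\Phi_\lambda^{-1}$ produces exactly \eqref{equa:eqa}--\eqref{equa:eqb}; your identification $[Y,\sigma]=Y(\sigma)-\nabla Y\cdot\sigma$ is correct in the paper's conventions.

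For the converse, your flow argument via $\Psi(\lambda)=E_{T_\lambda}(\mu,\sigma)$ and the group law $\Psi(\lambda_0+\epsilon)=E_{T_\epsilon}(\Psi(\lambda_0))$ is the right idea, and it is the argument one finds in the references. The only genuine soft spot is the one you flag yourself: invoking ODE uniqueness for the curve $\Psi$ in the infinite-dimensional space of smooth coefficient pairs. This can be made rigorous in several ways (for instance, by showing directly that the ``defect'' $E_{T_\lambda}(\mu,\sigma)-(\mu,\sigma)$ satisfies a linear first-order ODE in $\lambda$ with smooth, $\lambda$-dependent coefficients and zero initial datum, then applying a Gronwall-type estimate pointwise in $x$ on compact sets), but as written the appeal to ``Lipschitz dependence of the infinitesimal action'' is a sketch rather than a proof. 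If you want the argument to be fully self-contained, spell out that finite-dimensional reduction; otherwise, citing \cite{DMU3,DMU4} as the paper does is an acceptable shortcut.
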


In the following we recall the not trivial subclass of the general Girsanov transformations given by the quasi Doob transformations introduced in \cite{DMU4}. An abstract introduction of this measure change class can be found, e.g., in \cite{doob}.

\begin{definition}\label{quasidoob}[Quasi Doob transformation] Let $(\mu,\sigma)$ be a non explosive SDE and let $(X,W)$ be a solution to $(\mu,\sigma)$. Given a smooth function $h\colon M\to\mathbb{R}^m$ non explosive with respect to $(\mu,\sigma)$, we say that $h$ is a \emph{quasi Doob transformation with respect to the SDE $(\mu,\sigma)$ and characterized by the smooth function} $\mathfrak{h}\colon M\to\mathbb{R}^m$ if the measure $\mathbb{Q}$ generated by the random change of measure associated with $h$ is such that on the time horizon $[0,\mathcal{T}]$
	\[
	\frac{d\mathbb{Q}}{d\mathbb{P}}\Bigg|_{\mathcal{F}_\mathcal{T}}=\exp\left\{\mathfrak{h}(X_\mathcal{T})-\mathfrak{h}(X_0)-\int_{0}^{\mathcal{T}}G_{\mathfrak{h}}(X_s)ds\right\}
	\]
	where $G_{\mathfrak{h}}$ is a suitable $C^2(\mathbb{R}^m)$ function depending on $\mathfrak{h}$.
\end{definition}
In order to provide  conditions on $\mathfrak{h}$ and $h$ which guarantees that a random change of measure  is a quasi Doob one we can prove the following Proposition.
\begin{proposition}\label{PropDoobTransf}
	Let $h\colon M\to\mathbb{R}^m$ be a smooth function associated with a random change of measure transformation on $(\mu,\sigma)$. If $\mathfrak{h}\colon M\to\mathbb{R}^m$ is a smooth function satisfying
	\begin{align}
	\label{equa:equata}
	h_j(x)&=\sigma_j^i(x)\partial_{x^i}(\mathfrak{h})(x),\\
	\label{equa:equatb}
	\frac{1}{2}\sum_{j=1}^m(h_j(x))^2&=\frac{L(\exp(\mathfrak{h}))}{\exp(\mathfrak{h})}-L(\mathfrak{h})(x)=G_{\mathfrak{h}}(x)-L(\mathfrak{h})(x),
	\end{align}
	then $h$ is also a quasi Doob transformation.
\end{proposition}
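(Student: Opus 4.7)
The plan is to rewrite the Girsanov exponent from Theorem~\ref{theo:theg} as an exact differential, so that the resulting density takes the quasi Doob form of Definition~\ref{quasidoob}. Concretely, the target identity is
\[
\int_0^\mathcal{T} h_j(X_s)\,dW_s^j - \frac{1}{2}\int_0^\mathcal{T}\sum_{j=1}^m h_j(X_s)^2\,ds = \mathfrak{h}(X_\mathcal{T}) - \mathfrak{h}(X_0) - \int_0^\mathcal{T} G_{\mathfrak{h}}(X_s)\,ds,
\]
which, once exponentiated, gives exactly the Radon--Nikodym derivative demanded by the definition of a quasi Doob transformation.

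The first step is to apply It\^o's formula to the smooth function $\mathfrak{h}$ evaluated along the weak solution $X$ to the SDE $(\mu,\sigma)$. Using $dX_t^i = \mu^i(X_t)\,dt + \sigma_j^i(X_t)\,dW_t^j$ together with the explicit form $L = \frac{1}{2}(\sigma\sigma^T)^{ij}\partial_i\partial_j + \mu^i\partial_i$ of the generator, this yields
\[
\mathfrak{h}(X_\mathcal{T}) - \mathfrak{h}(X_0) = \int_0^\mathcal{T} L(\mathfrak{h})(X_s)\,ds + \int_0^\mathcal{T} \partial_i\mathfrak{h}(X_s)\,\sigma_j^i(X_s)\,dW_s^j.
\]
Hypothesis \eqref{equa:equata} is precisely the identification $\sigma_j^i\partial_i\mathfrak{h} = h_j$, so the stochastic integral on the right-hand side coincides with $\int_0^\mathcal{T} h_j(X_s)\,dW_s^j$; solving for this integral expresses it as $\mathfrak{h}(X_\mathcal{T}) - \mathfrak{h}(X_0)$ minus the Lebesgue integral of $L(\mathfrak{h})(X_s)$.

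Substituting this expression into the Girsanov exponent from Theorem~\ref{theo:theg} and then invoking hypothesis \eqref{equa:equatb} in the equivalent form $L(\mathfrak{h}) + \frac{1}{2}\sum_j h_j^2 = G_{\mathfrak{h}}$ collapses the two resulting time integrals into a single $-\int_0^\mathcal{T} G_{\mathfrak{h}}(X_s)\,ds$, producing precisely the quasi Doob representation. As a preliminary sanity check, one can verify the double-equality in \eqref{equa:equatb}: a direct expansion of $L(e^{\mathfrak{h}})/e^{\mathfrak{h}}$ via the chain rule for $L$ gives $L(\mathfrak{h}) + \frac{1}{2}(\sigma\sigma^T)^{ij}\partial_i\mathfrak{h}\,\partial_j\mathfrak{h}$, and the bilinear term reduces to $\frac{1}{2}\sum_j h_j^2$ thanks to \eqref{equa:equata}, so the second equality is a consistency relation rather than an independent hypothesis.

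I do not anticipate any genuine obstacle: the argument is essentially one application of It\^o's formula followed by two algebraic substitutions. The only minor points worth flagging are that the smoothness of $\mathfrak{h}$ is needed to apply It\^o's formula (automatically satisfied by assumption), and that one implicitly relies on Lemma~\ref{lemm:lea} to ensure the Dol\'eans--Dade exponential associated with $h$ is a true martingale, so that $\mathbb{Q}$ is a genuine probability measure on $\mathcal{F}_\mathcal{T}$ and the identity of Radon--Nikodym densities is meaningful.
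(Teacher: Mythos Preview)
Your proposal is correct. The paper itself does not include a proof of this proposition: it is stated without proof, as quasi Doob transformations are being recalled from the authors' earlier work \cite{DMU4}. Your argument---apply It\^o's formula to $\mathfrak{h}(X_t)$, use \eqref{equa:equata} to identify the martingale part with $\int h_j\,dW^j$, then use \eqref{equa:equatb} to collapse the drift terms into $-\int G_{\mathfrak{h}}\,ds$---is the natural and standard proof, and your additional observation that the second equality in \eqref{equa:equatb} is a consequence of \eqref{equa:equata} rather than an independent assumption is accurate and worth noting.
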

\begin{remark}
	Since in  Proposition \ref{PropDoobTransf} equation \eqref{equa:equata} depends on $\sigma$ and equation \eqref{equa:equatb} depends, through the operator $L$, on both $\mu$ and $\sigma$, the property of a change of measure to be a quasi Doob one depends on the given SDE $(\mu,\sigma)$.
\end{remark}

If in equation \eqref{equa:equata}  both $h_\lambda$ and $\mathfrak{h}_\lambda$ depend on a parameter $\lambda$ and we take the derivative with respect to that parameter in $\lambda=0$ with initial condition $h_0=0$ and $\mathfrak{h}_0=0$, we have that there exists a function $k=\partial_\lambda (\mathfrak{h}_\lambda)\vert_{\lambda=0}$ such that
\begin{align}
\label{equa:equatc}
H_j(x)&=\sigma_j^i(x)\partial_{x^i}(k)(x).
\end{align}
In this setting we finally provide the determining equations for the infinitesimal symmetries of quasi Doob type.
\begin{theorem}\label{TheoDOOB}
	An infinitesimal stochastic transformation $V=(Y,C,\tau,H)$ ( with $H=\sigma^T\cdot\nabla k$) is a symmetry of the SDE $(\mu,\sigma)$ involving only quasi Doob transformations with respect to $(\mu,\sigma)$ if and only if $V$ generates a one parameter group of transformations such that the following equations hold
	\begin{align}
	H-\sigma^T\cdot\nabla k&=0 \notag \\
	Y(\mu)-L(Y)-\sigma\cdot\sigma^T\cdot\nabla k+\tau\mu&=0 \notag \\
	[Y,\sigma]+\frac{1}{2}\tau\sigma+\sigma\cdot C&=0 \notag
	\end{align}
\end{theorem}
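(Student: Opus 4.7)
The plan is to derive these equations by specializing the general determining equations of Theorem~\ref{TheoDetEq} to the subclass of symmetries whose measure-change component is quasi Doob. First I would note that the infinitesimal transformation $V=(Y,C,\tau,H)$ is to be read as an augmentation of $(Y,\tau,H)$ by a skew-symmetric matrix $C$ acting on the Brownian motion component (a rotation of $\mathbb{R}^m$ that leaves Brownian law invariant); the corresponding finite transformation is therefore $(\Phi_\lambda,O_\lambda,\eta_\lambda,h_\lambda)$ with $O_\lambda\in SO(m)$ generated by $C$, and the symmetry condition on $\sigma$ in Proposition~\ref{prop:pra} must be modified to $\bigl(\tfrac{1}{\sqrt{\eta}}\nabla\Phi\cdot\sigma\cdot O^{-1}\bigr)\circ\Phi^{-1}=\sigma$. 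Differentiating this relation in $\lambda$ at $\lambda=0$ gives the third equation $[Y,\sigma]+\tfrac{1}{2}\tau\sigma+\sigma\cdot C=0$, exactly as in \eqref{equa:eqb} plus the new rotational term.

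Next I would handle the quasi Doob constraint on $h$. By Proposition~\ref{PropDoobTransf}, the finite measure change associated with $h_\lambda$ is quasi Doob iff there exists $\mathfrak{h}_\lambda$ with $(h_\lambda)_j=\sigma^i_j\,\partial_{x^i}\mathfrak{h}_\lambda$. This is the condition that must hold for every $\lambda$ in the one parameter group, so I would differentiate in $\lambda$ at $\lambda=0$, using $h_0=0$ and $\mathfrak{h}_0=0$. Setting $k:=\partial_\lambda\mathfrak{h}_\lambda|_{\lambda=0}$ yields precisely $H_j=\sigma^i_j\partial_{x^i}k$, i.e.\ $H-\sigma^T\cdot\nabla k=0$, recovering the first equation of the theorem and \eqref{equa:equatc}. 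Conversely, given $H=\sigma^T\nabla k$, I would reconstruct a one parameter family $\mathfrak{h}_\lambda$ by integrating $\partial_\lambda\mathfrak{h}_\lambda=k\circ\Phi_\lambda$ along the flow and verify, using the flow-reconstruction formula in Proposition~\ref{flow} for $h_\lambda$, that $(h_\lambda)_j=\sigma^i_j\,\partial_{x^i}\mathfrak{h}_\lambda$ is preserved by the Lie group action; the time derivative of both sides yields matching expressions by the chain rule.

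For the second equation, I would substitute $H=\sigma^T\cdot\nabla k$ directly into the first general determining equation \eqref{equa:eqa}: since $\sigma\cdot H=\sigma\cdot\sigma^T\cdot\nabla k$, the identity $Y(\mu)-L(Y)-\sigma\cdot H+\tau\mu=0$ becomes $Y(\mu)-L(Y)-\sigma\cdot\sigma^T\cdot\nabla k+\tau\mu=0$, which is the stated second equation. The converse is immediate: if the three equations hold with $H=\sigma^T\nabla k$ and the generated one parameter group is defined on $M$, the two general determining equations (augmented by the $\sigma\cdot C$ rotational term) are satisfied, so by Theorem~\ref{TheoDetEq} (in its rotation-augmented form) $V$ is an infinitesimal symmetry, and by Proposition~\ref{PropDoobTransf} its measure-change part is quasi Doob.

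The main obstacle I anticipate is not the algebraic substitution, which is routine, but rather the consistency verification that the quasi Doob structure is preserved along the one parameter group: one must show that if $H=\sigma^T\nabla k$ at $\lambda=0$, then the finite $h_\lambda$ produced by Proposition~\ref{flow} continues to satisfy $(h_\lambda)_j=\sigma^i_j\partial_{x^i}\mathfrak{h}_\lambda$ for some $\mathfrak{h}_\lambda$ at all $\lambda$. This requires combining the transformation rule for $\sigma$ along the flow (which itself is controlled by the third determining equation and the rotation $O_\lambda$) with the reconstruction ODE for $h_\lambda$, and checking that the resulting system is compatible with taking $\mathfrak{h}_\lambda$ as the flow-transported primitive of $k$. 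Once this compatibility is established, both directions of the equivalence follow.
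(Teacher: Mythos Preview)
Your proposal is correct and follows exactly the approach the paper itself indicates. Note that the paper does not give a formal proof of this theorem (Section~\ref{section_1} explicitly refers all proofs to \cite{DMU3} and \cite{DMU4}); what it does provide is the short derivation immediately preceding the statement, where equation~\eqref{equa:equatc} is obtained by differentiating the quasi Doob condition~\eqref{equa:equata} at $\lambda=0$, yielding $H=\sigma^T\cdot\nabla k$. Your argument reproduces this step and then, as the paper implicitly intends, substitutes it into the general determining equations of Theorem~\ref{TheoDetEq} (augmented by the rotational term $\sigma\cdot C$) to obtain the remaining two equations. Your additional care about the converse---checking that the quasi Doob structure persists along the flow---goes beyond what the paper makes explicit, but is the right point to flag.
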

In the following an infinitesimal stochastic symmetry satisfying the hypotheses of Theorem \ref{TheoDOOB}  is called a \emph{quasi Doob symmetries} for the SDE $(\mu, \sigma)$.
\begin{remark}
	If in Definition \ref{quasidoob} we take
	\[
	G_{\mathfrak{h}}(x)=0
	\]
	we obtain the well-known subclass of Doob transformations, that constitute a very important class with a deep meaning in our setting. Indeed in \cite{DMU3} it was proved that there exists  a one-to-one correspondence between infinitesimal symmetries of Doob type of an SDE and Lie point infinitesimal symmetries of the associated Kolmogorov equation. As stressed in \cite{DMU3}, this fact shows that  generally the family of symmetries of an SDE is wider than the family  of the symmetries for the corresponding (deterministic) Kolmogorov equation.
\end{remark}

\section{A quasi-invariance principle and integration by parts formulas}

In this section we provide a quasi-invariance property for the solution process to a given SDE with respect to a Lie's (finite) symmetry.
More precisely we establish the quasi-invariance of the corresponding law  $\mathbb{P}$ under the class of symmetries introduced in the previous section.
Furthermore, starting from the quasi-invariance principle we derive some integration by parts formulas which are the main results of the present paper.

The next result is a fundamental invariance principle of the type of Bismut-Malliavin calculus based on the theory of Lie symmetries for a process which is solution to an autonomous SDE.
\begin{theorem}\label{fundamental_invariance}
	Let $(X,W)$ be a (weak) solution to a non explosive SDE$(\mu,\sigma)$ of the type \eqref{SDE_definition} and let $(X^\prime,W^\prime)$ be the transformed process through the (finite) stochastic transformation $T=(\Phi,\eta,h)$. Let $h$ be a predictable stochastic process which is a non explosive vector field for $(\mu,\sigma)$. If $T$ is a (finite) symmetry for the  SDE $(\mu,\sigma)$, then for any fixed $t \in [0,\mathcal{T}] $ and for  any bounded measurable function $g$,  the following quasi-invariance principle holds
	\begin{equation}\label{invariance_principle2}
	\mathbb{E}_{\mathbb{P}}[g(X)]=\mathbb{E}_{\mathbb{{P}}}\left[g(X^\prime)\exp\left(\int_{0}^{t}h_s dW_s-\frac{1}{2}\int_{0}^{t}h_s^2ds\right)\right].
	\end{equation}
\end{theorem}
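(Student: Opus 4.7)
The plan is to combine the symmetry hypothesis with a direct application of Girsanov's change of measure in three short steps, isolating everything that depends on the SDE into a law-invariance statement and everything that depends on the reference measure into the exponential density factor.

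First, I would invoke Definition \ref{symmetry_definition} together with Theorem \ref{theo:thea}: because $T=(\Phi,\eta,h)$ is a finite symmetry of $(\mu,\sigma)$, the transformed pair $(X',W')=P_T(X,W)$ is itself a weak solution of $E_T(\mu,\sigma)=(\mu,\sigma)$, but now living on the probability space $(\Omega,\mathcal{F},\mathbb{Q})$ prescribed by Definition \ref{defi:dea}, whose Radon--Nikodym density is
\[
\left.\frac{d\mathbb{Q}}{d\mathbb{P}}\right|_{\mathcal{F}_{\mathcal{T}}}=\exp\Bigl(\int_0^{\mathcal{T}} h_j(X_s)\,dW^j_s-\tfrac12\int_0^{\mathcal{T}}\sum_j h_j(X_s)^2\,ds\Bigr)=:Z_{\mathcal{T}}.
\]
Lemma \ref{lemm:lea} supplies exactly what is needed to make this step rigorous: since $h$ is non-explosive for $(\mu,\sigma)$, the supermartingale $Z$ is actually a $\mathbb{P}$-martingale, so that $\mathbb{Q}$ is a bona fide probability measure equivalent to $\mathbb{P}$ on $\mathcal{F}_{\mathcal{T}}$.

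Second, both $(X,W)$ under $\mathbb{P}$ and $(X',W')$ under $\mathbb{Q}$ solve the same martingale problem for $(\sigma\sigma^T,\mu)$ with the same initial law (the diffeomorphism $\Phi$ together with the definition of a symmetry forces this matching at $t=0$), so by uniqueness in law of the solutions to $(\mu,\sigma)$ — which is implicit in the weak-well-posedness assumption behind the SDE — we have $\mathcal{L}_{\mathbb{P}}(X)=\mathcal{L}_{\mathbb{Q}}(X')$. Therefore, for any bounded measurable functional $g$,
\[
\mathbb{E}_{\mathbb{P}}[g(X)]=\mathbb{E}_{\mathbb{Q}}[g(X')]=\mathbb{E}_{\mathbb{P}}\bigl[g(X')\,Z_{\mathcal{T}}\bigr],
\]
the second equality being the Bayes identity defining $\mathbb{Q}$. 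This already yields \eqref{invariance_principle2} with $Z_{\mathcal{T}}$ in place of $Z_t$; when $g$ is measurable with respect to the sigma-algebra generated by $X$ up to time $t$, conditioning on $\mathcal{F}_t$ and using the martingale property of $Z$ replaces $Z_{\mathcal{T}}$ by $Z_t$, matching the displayed formula exactly.

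The main obstacle is conceptual rather than computational: ensuring that the identification $\mathcal{L}_{\mathbb{P}}(X)=\mathcal{L}_{\mathbb{Q}}(X')$ is legitimate. This is the only point where one uses something beyond the bookkeeping already established in Section \ref{section_1}, and it rests on uniqueness in law for the martingale problem of $(\mu,\sigma)$. Everything else — the non-explosion of $h$, the true-martingale property of $Z$, the Girsanov identification of $W'$, and the closure of the class of solutions under the transformation $P_T$ — is packaged into Lemma \ref{lemm:lea} and Theorem \ref{theo:thea}, so once that law-invariance is in hand the proof reduces to a one-line change of measure.
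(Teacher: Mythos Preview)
Your proposal is correct and follows essentially the same approach as the paper: invoke the symmetry to identify the law of $X'$ under $\mathbb{Q}$ with that of $X$ under $\mathbb{P}$, then undo the measure change via Radon--Nikodym. The paper's proof is terser---it simply asserts the law identity and applies the density---whereas you spell out the supporting ingredients (Lemma~\ref{lemm:lea}, Theorem~\ref{theo:thea}, the martingale step from $Z_{\mathcal{T}}$ to $Z_t$) and flag the implicit reliance on uniqueness in law for the martingale problem, a point the paper leaves tacit.
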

\begin{proof}
	Since $T$ is a symmetry, the transformed  process $(X^\prime,W^\prime)$ is solution to the same SDE $(\mu,\sigma)$ and in particular $X^\prime$ under the measure $\mathbb{Q}$ has the same law of $X$ under $\mathbb{P}$, which implies
	\[
	\mathbb{E}_{\mathbb{P}}[g(X)]=\mathbb{E}_{\mathbb{Q}}[g(X^\prime)].
	\]
	By applying a Radon-Nykodim change of measure in the expectation with respect to the measure $\mathbb{Q} $ we obtain that for any $t \in [0,\mathcal{T}]$
	\begin{equation}\label{invariance_principle}
	\mathbb{E}_{\mathbb{P}}[g(X)]=\mathbb{E}_{\mathbb{{P}}}\left[g(X^\prime)\exp\left(\int_{0}^{t}h_s dW_s-\frac{1}{2}\int_{0}^{t} h_s^2ds\right)\right],
	\end{equation}
	which is the statement of the theorem. 	
\end{proof}

In the Malliavin-Bismut calculus it is well known that from the fundamental invariance principle an integration by parts formula can be derived.
In this section we provide some integration by parts formulas using a Lie's symmetries approach in a finite dimensional setting and, for simplicity, we provide the proof for the case of functionals of the process valuated at a single time. The extension of our integration by parts formula to smooth cylinder functionals of diffusion processes is given in Appendix A.\\

In order to prove our main results, we need the following technical lemma, that states a condition under which we can take the derivative with respect to a parameter inside the expectation.

\begin{lemma}\label{lemma_derivative2}
	Let $g(\lambda, X): \Real \times M \rightarrow \Real$ be a function which is integrable for all $\lambda$ and continuously differentiable with respect to $\lambda$ and  such that
	$\mathbb{E}_{\mathbb{{P}}}[|\partial^2_{\lambda}{g}(\lambda, X)|]<+\infty $.
Then
	\[
	\partial_{\lambda}\mathbb{E}_{\mathbb{{P}}}[{g}(\lambda, X)]=\mathbb{E}_{\mathbb{{P}}}[\partial_{\lambda}{g}(\lambda, X)].
	\]
\end{lemma}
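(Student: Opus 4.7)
The plan is to reduce the statement to an application of the dominated convergence theorem to the difference quotient
\[
\Delta_h(\omega) := \frac{g(\lambda+h, X(\omega)) - g(\lambda, X(\omega))}{h}.
\]
Since $g$ is continuously differentiable in $\lambda$, $\Delta_h(\omega) \to \partial_\lambda g(\lambda, X(\omega))$ pointwise in $\omega$ as $h\to 0$, so the entire work consists in exhibiting an integrable dominator that is uniform in $h$ in a neighbourhood of $0$.

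To build the dominator I would combine the mean value theorem in the $\lambda$-variable with a second application of the fundamental theorem of calculus. Concretely, the MVT gives $\Delta_h = \partial_\lambda g(\lambda + \theta h, X)$ for some $\theta = \theta(\omega, h) \in (0,1)$, and expanding
\[
\partial_\lambda g(\lambda+\theta h, X) = \partial_\lambda g(\lambda, X) + \int_\lambda^{\lambda+\theta h} \partial^2_\lambda g(s, X)\, ds
\]
yields the uniform bound
\[
|\Delta_h(\omega)| \;\le\; |\partial_\lambda g(\lambda, X(\omega))| + \int_{\lambda-|h|}^{\lambda+|h|} |\partial^2_\lambda g(s, X(\omega))|\, ds.
\]
The first term is in $L^1(\mathbb{P})$ by the FTC identity $\partial_\lambda g(\lambda, X) = \partial_\lambda g(0, X) + \int_0^\lambda \partial^2_\lambda g(s, X)\, ds$ combined with Tonelli and the hypothesis on $\partial^2_\lambda g$. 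For the integral term, a direct application of Tonelli gives
\[
\mathbb{E}\Bigl[\int_{\lambda-|h|}^{\lambda+|h|} |\partial^2_\lambda g(s, X)|\, ds\Bigr] = \int_{\lambda-|h|}^{\lambda+|h|} \mathbb{E}[|\partial^2_\lambda g(s, X)|]\, ds,
\]
which is finite for all sufficiently small $h$ provided $s\mapsto \mathbb{E}[|\partial^2_\lambda g(s,X)|]$ is locally integrable near $\lambda$ (a soft consequence of the continuity in $s$ inherited from the smoothness of $g$). Dominated convergence then delivers $\partial_\lambda \mathbb{E}[g(\lambda,X)] = \mathbb{E}[\partial_\lambda g(\lambda,X)]$.

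The main technical point I expect is exactly the production of an $h$-independent dominator: the stated hypothesis $\mathbb{E}[|\partial^2_\lambda g(\lambda,X)|]<\infty$ is a one-slice bound, and the argument above tacitly needs the same bound on a small interval around the base point. In the applications later in the paper this will be harmless, because the function $g$ arises from the Doléans--Dade density and the flow of an infinitesimal symmetry, both of which are jointly smooth in $(\lambda,\omega)$ with good uniform estimates; I would therefore interpret (or state) the hypothesis as local integrability in $\lambda$ of $\mathbb{E}[|\partial^2_\lambda g|]$ in a neighbourhood of the parameter value at which the derivative is taken, and proceed as above.
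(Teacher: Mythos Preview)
Your proposal is correct and follows essentially the same route as the paper: apply the mean value theorem to write the difference quotient as $\partial_\lambda g$ at an intermediate point, then use the fundamental theorem of calculus to express the discrepancy from $\partial_\lambda g(\lambda,X)$ as an integral of $\partial^2_\lambda g$, and conclude from the integrability hypothesis. Your write-up is in fact more careful than the paper's, which glosses over exactly the point you flag---that the stated one-slice bound $\mathbb{E}[|\partial^2_\lambda g(\lambda,X)|]<\infty$ must really hold locally uniformly in $\lambda$ for the dominator to work.
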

\begin{proof}
	This can be easily proved by mean-value theorem which states that there exists  $\tau(\epsilon)\in (\lambda, \lambda+\epsilon)$ such that
	 \[
	 \partial_{\lambda}\mathbb{E}_{\mathbb{{P}}}[\tilde{g}(\lambda, X)]=\lim_{\epsilon \rightarrow 0}\mathbb{E}_{\mathbb{{P}}}\left[\frac{\tilde{g}(\lambda+\epsilon, X)-\tilde{g}(\lambda, X)}{\epsilon}\right]=\lim_{\epsilon \rightarrow 0}\mathbb{E}_{\mathbb{{P}}}[\partial_{\lambda}\tilde{g}(\tau(\epsilon), X)].
	 \]
By putting
\[
\partial_{\lambda}{g}(\tau(\epsilon), X)=\partial_{\lambda}{g}(\lambda, X)+\int_{\lambda}^{\lambda+\epsilon}\partial^2_{\lambda}{g}(\sigma^\prime, X)d\sigma^\prime,
\]
since $\mathbb{E}_{\mathbb{{P}}}[|\partial^2_{\lambda}{g}(\lambda, X)|]<+\infty $ we get the result.	
\end{proof}
For later use, in the following we recall a result about the continuity and the differentiability with respect to parameters of stochastic integrals that can be found in \cite{Kunita_flows} (see Proposition 2.3.1 for details).
\begin{lemma}[Kunita]\label{lemma-derivative3}
Let $\Sigma=\Real^e $ be the parameter set and let $p>\min ( e,  2)$. Suppose that
\begin{equation}\label{Kunita_conditions}
\sup_{\lambda}\mathbb{E}\left[\int_{0}^{T}|g_\lambda (r)|^pdr\right]<+\infty, \quad \mathbb{E}\left[\int_{0}^{T}|g_\lambda (r)-g_{\lambda'}|^pdr\right]<c_p |\lambda-\lambda'|^p
\end{equation}
where  $g_\lambda (r) $ is $n$-times continuously differentiable with respect to $ \lambda$ and, for $|i|\leq n $, the derivatives $\partial^i_\lambda g_\lambda (r)   $ satisfy \eqref{Kunita_conditions}. Then the family of It\^o's stochastic integrals $( \int_{0}^{t}g_\lambda (r)dW_r )_t $ has a modification which is $n$-times continuously differentiable with respect to $\lambda $ a.s. For any
$|i| \le n $, we have
\[
\partial^i_\lambda \int_{0}^{t}g_\lambda (r)dW_r=\int_{0}^{t}\partial^i_\lambda g_\lambda (r)dW_r \quad \forall (t,\lambda)\quad  a.s.
\]
	
\end{lemma}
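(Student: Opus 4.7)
The plan is to combine the Burkholder--Davis--Gundy (BDG) inequality with Kolmogorov's continuity criterion applied on the joint space $[0,T] \times \Sigma$ with $\Sigma = \mathbb{R}^e$. The exponent condition $p > \min(e,2)$ is calibrated precisely so that BDG gives effective $L^p$ control (via Jensen, once $p \geq 2$) while $p$ is also large enough for Kolmogorov's criterion on the parameter space to produce jointly continuous modifications from the $p$-th moment estimates of the increments.

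First I would establish joint continuity of $(t,\lambda) \mapsto I_t(\lambda) := \int_0^t g_\lambda(r)\, dW_r$. Combining BDG with Jensen to convert the quadratic variation $\bigl(\int_0^T |g_\lambda - g_{\lambda'}|^2 dr\bigr)^{p/2}$ into a multiple of $\int_0^T |g_\lambda - g_{\lambda'}|^p dr$, hypothesis \eqref{Kunita_conditions} yields
$$\mathbb{E}\left[\sup_{t \leq T} |I_t(\lambda) - I_t(\lambda')|^p\right] \leq K_p\, |\lambda - \lambda'|^p.$$
Kolmogorov's criterion on $\Sigma \times [0,T]$ then delivers a jointly continuous modification $(t,\lambda) \mapsto \tilde I_t(\lambda)$.

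Next I would prove the differentiation formula in a single variable $\lambda_i$. For a unit vector $e_i$ and $h \neq 0$, write the difference quotient as
$$\frac{I_t(\lambda + h e_i) - I_t(\lambda)}{h} = \int_0^t \left(\int_0^1 \partial_{\lambda_i} g_{\lambda + s h e_i}(r)\, ds\right) dW_r.$$
Applying BDG to the difference between this stochastic integral and the candidate derivative $J^i_t(\lambda) := \int_0^t \partial_{\lambda_i} g_\lambda(r)\, dW_r$, and using the hypothesis \eqref{Kunita_conditions} applied to $\partial_{\lambda_i} g$, shows that the difference quotient converges to $J^i_t(\lambda)$ in $L^p$ as $h \to 0$. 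Running the same Kolmogorov argument on the field $(t,\lambda) \mapsto J^i_t(\lambda)$ produces a jointly continuous modification, and a standard telescoping/finite-difference argument identifies this modification with the pointwise partial derivative of $\tilde I$ in $\lambda_i$.

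Higher-order derivatives for $|i| \leq n$ are then handled by iterating the same step, invoking at each stage the hypothesis on $\partial^i_\lambda g_\lambda$. The main obstacle, and where the argument needs care, is to exhibit a \emph{single} modification of $I$ that is simultaneously $n$-times continuously differentiable in $\lambda$ on one full-measure event, rather than constructing modifications $\lambda$ by $\lambda$. This is resolved by applying the Kolmogorov criterion jointly to the vector-valued field $(J^i_t(\lambda))_{|i| \leq n}$ on $[0,T] \times \Sigma$ and then invoking the classical calculus fact that locally uniform convergence of continuous partial derivatives upgrades $C^{n-1}$ regularity to $C^n$, so that the derivatives of $\tilde I$ genuinely coincide almost surely with the corresponding stochastic integrals for all $(t,\lambda)$ at once.
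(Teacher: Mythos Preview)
The paper does not give its own proof of this lemma: it is stated as a quotation from Kunita's monograph (the text preceding the lemma explicitly says the result ``can be found in \cite{Kunita_flows} (see Proposition 2.3.1 for details)''), and no argument is supplied. Your outline --- BDG plus Jensen to obtain $L^p$ increment bounds on the stochastic integrals, Kolmogorov's criterion on $[0,T]\times\Sigma$ for a jointly continuous modification, the mean-value representation of the difference quotient to identify the derivative, and iteration for higher order --- is exactly the standard route used in Kunita's book, so there is nothing to compare: you have reconstructed the referenced proof.

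One small caveat: your sentence claiming that ``$p>\min(e,2)$ is calibrated precisely so that BDG gives effective $L^p$ control (via Jensen, once $p\geq 2$) while $p$ is also large enough for Kolmogorov's criterion'' does not quite parse as written, since $p>\min(e,2)$ alone guarantees neither $p\geq 2$ nor $p>e$. The hypothesis as printed in the paper appears to contain a typo (one expects $p>\max(e,2)$, as in Kunita); your argument actually uses both $p\geq 2$ (for the Jensen step inside BDG) and $p>e$ (for Kolmogorov on the $e$-dimensional parameter), so you should state those two requirements explicitly rather than trying to read them out of the printed condition.
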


We state our integration-by-parts formulas, which is the main results of our novel approach, under the following assumption.

\begin{hypothesis}\label{Hp:A}
For any solution $X_t$ to the SDE $(\mu,\sigma)$ with deterministic initial condition we have that:
	\[H_{\alpha} (X_t, t), Y (H_{\alpha}) (X_t, t), L (Y^i) (X_t, t),
	\Sigma_{\alpha} (Y^i) (X_t, t), L (Y (Y^i)) (X_t, t), \Sigma_{\alpha} (Y
	(Y^i)) (X_t, t) \in L^2 (\Omega)\]
\end{hypothesis}

\begin{theorem}[Integration by parts formula]\label{integrationbyparts1}
	Let $(X,W)$ be a solution to the SDE \eqref{SDE_definition} and let $(Y, \tau ,H)$ be an infinitesimal stochastic symmetry for the SDE according with Definition \ref{symmetry_definition}. Let us consider the one parameter group  $T_\lambda=(\Phi_{\lambda}, \eta_\lambda, h_\lambda) $ associated with the infinitesimal symmetry according to Proposition \ref{flow}. 
	Assuming Hypothesis \ref{Hp:A},
	then the following integration by parts formula holds for every $t \in [0,\mathcal{T}]$
	\begin{multline}
	- m(t)\mathbb{E}_{\mathbb{{P}}} \left[L({F})(X_t)\right]+\mathbb{E}_{\mathbb{{P}}}\left[F(X_t)\int_{0}^{t}(H_\alpha (X_s)dW^\alpha_s)\right]+
	\mathbb{E}_{\mathbb{{P}}}[(Y({F}))(X_t)] - \mathbb{E}_{\mathbb{P}} [Y (F) (X_0)]=0
	\end{multline}
	where $F$ is a bounded functional with bounded first and second derivative.
\end{theorem}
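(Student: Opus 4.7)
The plan is to apply the quasi-invariance identity \eqref{invariance_principle2} to the bounded functional $F(X_t)$ along the one-parameter family $T_\lambda = (\Phi_\lambda, \eta_\lambda, h_\lambda)$ reconstructed from the infinitesimal symmetry $(Y, \tau, H)$ via Proposition \ref{flow}, and then to differentiate both sides at $\lambda = 0$. Because the left-hand side $\mathbb{E}_\mathbb{P}[F(X_t)]$ is manifestly independent of $\lambda$, the sum of the contributions coming from the right-hand side derivative must vanish, and the task reduces to identifying each contribution with one of the four summands in the claim.

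Concretely, the right-hand side reads $\mathbb{E}_\mathbb{P}[F(\Phi_\lambda(X_{f_\lambda^{-1}(t)})) Z_{\lambda,t}]$, where $Z_{\lambda,t}$ is the Doleans-Dade exponential associated with $h_\lambda(X)$ and $f_\lambda$ is the deterministic time-change reconstructed from $\tau$ through \eqref{deterministictimechange}. Differentiating under the expectation splits $\partial_\lambda|_{\lambda = 0}$ into four pieces: (a) the variation of the exponential, which using $h_0 = 0$ and $\partial_\lambda h_\lambda|_{\lambda=0} = H$ contributes $\mathbb{E}[F(X_t)\int_0^t H_\alpha(X_s) dW^\alpha_s]$; (b) the spatial variation through $\Phi_\lambda$, which by the chain rule and $\partial_\lambda \Phi_\lambda|_{\lambda=0} = Y$ contributes $\mathbb{E}[Y(F)(X_t)]$; (c) the time-change variation through $f_\lambda^{-1}(t)$ inside $X_{f_\lambda^{-1}(t)}$, which contributes $-m(t)\mathbb{E}[L(F)(X_t)]$; and (d) the variation of the initial point $X'_{\lambda, 0} = \Phi_\lambda(X_0)$, which contributes $-\mathbb{E}[Y(F)(X_0)]$. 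Setting their sum equal to zero is exactly the claim. The interchange of $\partial_\lambda$ with the expectation is licensed by Lemma \ref{lemma_derivative2} and its interchange with the It\^o integral inside $Z_{\lambda,t}$ by Kunita's Lemma \ref{lemma-derivative3}; Hypothesis \ref{Hp:A} supplies exactly the $L^2$ bounds on the symmetry components $H, Y(H), L(Y), \Sigma_\alpha(Y), L(Y(Y)), \Sigma_\alpha(Y(Y))$ required to verify the second-$\lambda$-derivative-in-$L^1$ condition of the former and the moment and regularity conditions of the latter, with Cauchy--Schwarz and the boundedness of $F, F', F''$ closing the estimates.

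The main obstacle is the temporal term (c): the pathwise derivative $\partial_\lambda X_{f_\lambda^{-1}(t)}$ does not exist because trajectories of $X$ are a.s.\ nowhere differentiable, so one cannot differentiate pathwise and then take the expectation. The plan for this term is to pass to expectation first and invoke the Kolmogorov forward relation $\partial_s \mathbb{E}[F(X_s)] = \mathbb{E}[L(F)(X_s)]$ together with $\partial_\lambda f_\lambda^{-1}(t)|_{\lambda=0} = -m(t)$; equivalently, one applies It\^o's formula to $F(\Phi_\lambda(X_{f_\lambda^{-1}(t)}))$ on the compressed time interval and exploits the determining equations \eqref{equa:eqa}--\eqref{equa:eqb} to cancel the extraneous martingale and drift contributions, leaving exactly the four terms enumerated above.
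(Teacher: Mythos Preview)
Your high-level strategy---apply the quasi-invariance principle along $T_\lambda$ and differentiate at $\lambda=0$---is exactly the paper's approach, and you correctly isolate the key obstacle (the pathwise time-derivative through $X_{f_\lambda^{-1}(t)}$) together with the correct remedy (pass to the It\^o-integral form and exploit the determining equations). In that sense the plan is on target.

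There are, however, two genuine gaps. First, your item (d) is misidentified. Differentiating the right-hand side $\mathbb{E}_\mathbb{P}[F(\Phi_\lambda(X_{f_\lambda^{-1}(t)}))Z_{\lambda,t}]$ produces no explicit initial-point term; and if instead you track the $\lambda$-dependence of the law through the transformed initial condition $\Phi_\lambda(x_0)$, what appears is $Y^i(x_0)\,\partial_{y^i}\mathbb{E}^y[F(X_t)]\big|_{y=x_0}$, which is \emph{not} $\mathbb{E}[Y(F)(X_0)]=Y(F)(x_0)$. In the paper the term $-\mathbb{E}[Y(F)(X_0)]$ arises differently: after going to the It\^o-integral representation (equation \eqref{IP3}) and differentiating, one uses the commutator identity $[Y,L]=-\tau L+H_\alpha\Sigma_\alpha$ to convert $\int_0^t Y(L(F))(X_s)\,ds$ into $\int_0^t L(Y(F))(X_s)\,ds$ plus corrections, and then a second application of It\^o's formula to $Y(F)(X_t)$ yields $\mathbb{E}[Y(F)(X_t)]-\mathbb{E}[Y(F)(X_0)]=\mathbb{E}[\int_0^t L(Y(F))\,ds]$. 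That is where the $X_0$ term comes from.

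Second, once you adopt the It\^o-integral workaround (which is unavoidable for (c)), the clean splitting (a)--(d) no longer falls out directly. Differentiating $Z_\lambda$ against the integral form gives $\mathbb{E}\big[(\int_0^t H_\alpha\,dW^\alpha)(\int_0^t L(F)(X_s)\,ds)\big]$, not $\mathbb{E}[F(X_t)\int_0^t H_\alpha\,dW^\alpha]$; converting the former into the latter requires the stochastic integration-by-parts manipulations (and quadratic-variation cancellations) that occupy the second half of the paper's proof. Likewise the $\int \tau L(F)$ and $\int H_\alpha\Sigma_\alpha(F)$ corrections produced by the commutator must be shown to cancel against the cross-terms. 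Your plan should state explicitly that after the It\^o reduction one invokes $[Y,L]=-\tau L+H_\alpha\Sigma_\alpha$, applies It\^o to $Y(F)$, and performs the stochastic product-rule computation on $(\int H\,dW)(\int L(F)\,ds)$; without these steps the four terms of the claim are not recovered.
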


\begin{proof}

Since the random transformation $T_\lambda$ is a symmetry for the SDE, denoting by $X^\lambda_t=P_{T_\lambda}(X,W)$ the related transformed process, by definition of symmetry we have $\forall t \in [0,\mathcal{T}] $
\[
\mathbb{E}_{\mathbb{{P}}}[{F}(X_t)]=\mathbb{E}_{\mathbb{{Q}_\lambda}}[{F}(X^\lambda_t)].
\]
By It\^o formula and since $X^\lambda_t$ is solution to the same original SDE
\begin{align*}
\mathbb{E}_{\mathbb{{Q}_\lambda}}[{F}(X^\lambda_t)]=&\mathbb{E}_{\mathbb{{Q}_\lambda}}\left[\int_{0}^{t}L(F)(X^{\lambda}_s)ds_\lambda +\partial_{i} (F)(X^{\lambda}_s)\sigma_\alpha^i dW^{\alpha,\lambda}_s)\right]\\
=&\mathbb{E}_{\mathbb{{Q}_\lambda}}\left[\int_{0}^{t}L(F)(X^{\lambda}_s)ds_\lambda\right],
\end{align*}
where we used that, under the hypotheses of the theorem, the expectation of the stochastic integral with respect to $W^{\alpha,\lambda}_t$ is zero. Indeed, introducing $g(s,X_s)=\partial_i (F\circ \Phi_{\lambda})(X_s)\sigma_\alpha^i$, if $g $ is $\mathcal{B}\otimes \mathcal{F}$-measurable, $\forall s, g(s,\cdot) $ is $ \mathcal{F}$-measurable and, for any $s$,
\[
g(s,\cdot) \in L^2(\Omega, \mathcal{F}, \mathbb{Q}_\lambda),\quad \mathbb{E}_{\mathbb{{Q}}_\lambda}\left[\int_{0}^{t}|g(s,\cdot)|^2ds\right] < +\infty,
\]
then the stochastic integral with respect to the $\mathbb{Q}_{\lambda}$-Brownian motion $W^{\alpha,\lambda}_t$ is a (global) martingale. By Hypothesis A the integrand is in $L^2$ and so the It\^o integral is well-defined.

Denoting by $f_{-\lambda}(t) $ the inverse of the deterministic time-change $f_\lambda (t) $ and  applying a deterministic change of variables in the integral we get $\forall t \in [0,\mathcal{T}] $
\[
\mathbb{E}_{\mathbb{{Q}_\lambda}}\left[\int_{0}^{t}L(F)(X^{\lambda}_{s_\lambda})ds_\lambda\right]=\mathbb{E}_{\mathbb{{Q}_\lambda}}\left[\int_{0}^{f_{-\lambda}(t) } L(F)(\Phi_{\lambda}(X_s,s))f^\prime_\lambda(s)ds\right].
\]
By performing a Radom-Nikodym measure change the right-hand side expectation becomes
\[
\mathbb{E}_{\mathbb{{Q}_\lambda}}\left[\int_{0}^{f_{-\lambda}(t) } L(F)(\Phi_{\lambda}(X_s,s))f^\prime_\lambda(s)ds\right] =\mathbb{E}_{\mathbb{{P}}}\left[\left.\frac{d\mathbb{Q}_{\lambda}}{d\mathbb{P}}\right|_{\mathcal{F}_\mathcal{T}} \int_{0}^{f_{-\lambda}(t) } L(F) \circ \Phi_\lambda(X_s,s)f^\prime_\lambda(s)ds\right],
\]
so we finally get
\begin{equation}\label{IP3}
\mathbb{E}_{\mathbb{{P}}}[{F}(X_t)]=\mathbb{E}_{\mathbb{{P}}}\left[\left.\frac{d\mathbb{Q}_{\lambda}}{d\mathbb{P}}\right|_{\mathcal{F}_\mathcal{T}} \int_{0}^{f_{-\lambda}(t) } L(F) \circ \Phi_\lambda(X_s,s)f^\prime_\lambda(s)ds\right].
\end{equation}

We want to take the derivative with respect to the parameter $\lambda$ of both sides in the previous equality. Since there is no dependence on the parameter $\lambda$ in the left part, the same is true for the term in the right part.
 Here we have to take the derivative inside the expectation.
Denoting by $P(\lambda) = Z_\lambda \int_{0}^{{f_{-\lambda}(t) }}L(F) \circ \Phi_{\lambda})(X_s,s)f^\prime_\lambda(s)ds$,  where $Z_\lambda=\frac{d\mathbb{Q}_{\lambda}}{d\mathbb{P}}$, under Hypothesis A by Theorem \ref{technicalderivability}
 we have
 $\forall \lambda$
\begin{equation}\label{derivability_conditions}
\partial^2_{\lambda}P(\lambda)\in L^2,
\end{equation}
and therefore by Lemma \ref{lemma_derivative2}
\[
\partial_{\lambda}\mathbb{E}_{\mathbb{{P}}}[P(\lambda)]=\mathbb{E}_{\mathbb{{P}}}[\partial_{\lambda}P(\lambda)].
\]
Taking the derivative with respect to $\lambda $ and evaluating the result at $\lambda=0 $ in \eqref{IP3} we obtain
\begin{multline}\label{IP4}
0=\mathbb{E}_{\mathbb{{P}}}\left[\left(\int_{0}^{\mathcal{T}}H_\alpha (X_t)dW^\alpha_t\right)\left(\int_{0}^{t}L({F})(X_s)ds\right)]-m(t)\mathbb{E}_{\mathbb{{P}}}[L({F})(X_t)\right]+ \mathbb{E}_{\mathbb{{P}}}\left[\int_{0}^{t}Y(L({F}))(X_s)ds\right]+\\
+\mathbb{E}_{\mathbb{{P}}}\left[\int_{0}^{t}\tau(s) L({F})(X_s)ds\right]
\end{multline}
where the first term has been obtained by taking the partial derivative of the Doleans-Dade exponential process as in Definition \ref{defi:dea} and using the fact that, by Proposition \ref{flow},  $\partial_{\lambda}h_{\lambda, j}(x)|_{\lambda=0}=H_j(x)$, the second term by using the fundamental theorem of calculus for deriving the $\lambda $ dependent estremes of integration and the fact that $\partial_{\lambda} f_{-\lambda}(t)|_{\lambda=0}=-m(t) $ and $f^\prime_{\lambda}(t)=\eta_{\lambda}(t) $ as in \eqref{deterministictimechange}, the third one by deriving the spatial flow $\Phi_\lambda$ according with Proposition \ref{flow} together with the property that $\Phi_0$ is the identity function and the last one by observing that, since $f^\prime=\eta $, then $\partial_{\lambda}f^\prime_\lambda=\tau f^\prime_\lambda $ (with $f^\prime_0=1$).

Since for an infinitesimal symmetry we have $[Y,L]=-\tau L+H_\alpha \Sigma_{\alpha} $, this implies that
\begin{align*}
\mathbb{E}_{\mathbb{{P}}}\left[\int_{0}^{t}Y(L({F}))(X_s)ds\right]=&\mathbb{E}_{\mathbb{{P}}}\left[\int_{0}^{t}L(Y({F}))(X_s)ds\right]- \mathbb{E}_{\mathbb{{P}}}\left[\int_{0}^{t}\tau (s) L({F})(X_s)ds\right]+\\
&+ \mathbb{E}_{\mathbb{{P}}}\left[\int_{0}^{t}H_\alpha (X_s) \sigma_{\alpha}  \nabla F(X_s)ds\right].
\end{align*}
By It\^o formula we have
\begin{align*}
\mathbb{E}_{\mathbb{{P}}}\left[(Y({F}))(X_{t})\right]=&\mathbb{E}_{\mathbb{{P}}}\left[(Y({F})(X_{0})\right]\\
&+\mathbb{E}_{\mathbb{P}}\left[\int_{0}^{t}L(Y({F}))(X_s)ds\right] + \mathbb{E}_{\mathbb{{P}}}\left[\int_{0}^{t} \partial_{i} Y({F})(X_s)\sigma^i_\alpha dW^\alpha_s\right],
\end{align*}
from which we get
\[
\mathbb{E}_{\mathbb{{P}}}\left[(Y({F}))(X_{t})\right]-\mathbb{E}_{\mathbb{{P}}}[(Y({F}))(X_{0})]=
\mathbb{E}_{\mathbb{P}}\left[\int_{0}^{t}L(Y({F}))(X_s)ds\right].
\]
Let us consider the first addendum in the right-hand side of \eqref{IP4}. Since stochastic integrals are martingales we get
\[
\mathbb{E}_{\mathbb{{P}}}\left[\left(\int_{0}^{\mathcal{T}}H_\alpha (X_t)dW^\alpha_t\right)\left(\int_{0}^{t}L({F})(X_s)ds\right)\right]=\mathbb{E}_{\mathbb{{P}}}\left[\left(\int_{0}^{t} H_\alpha (X_s)dW^\alpha_s \right) \left(\int_{0}^{t}L({F})(X_s)ds\right)\right]
\]
and by integration by parts
\[ \left(\int_{0}^{t} H_\alpha (X_s)dW^\alpha_s \right) \cdot \left(\int_{0}^{t}  L(F) (X_s) ds\right) =\int_0^t \left\{ \left( \int_0^s H_\alpha (X_\tau)dW^\alpha_\tau \right) L(F) (X_s) ds \right\}  \]
\[ +\int_0^t \left(\int_{0}^{s} L(F) (X_\tau)d\tau\right) H_\alpha (X_s)dW^\alpha_s  +\left [\int_{0}^{t} H_\alpha (X_s)dW^\alpha_s, \int_{0}^{t}  L(F) (X_s) ds \right ]  \]
where the last term is a zero quadratic variation since the second term is absolutely continuous.
Since stochastic integrals are martingales
\[ \mathbb{E}_{\mathbb{P}} \left[ \int_0^t \left( \int_0^s H_\alpha (X_\tau)dW^\alpha_\tau \right) L (F) (X_s)  ds \right] = \]
\begin{equation} \label{scomposition}
 =\mathbb{E}_{\mathbb{P}} \left[ \int_0^t \left\{ \left( \int_0^s H_\alpha (X_\tau)dW^\alpha_\tau \right) L (F) (X_s) ds + \left( \int_0^s H_\alpha (X_\tau)dW^\alpha_\tau \right) \sigma_\alpha \nabla F (X_s) dW^\alpha_s
\right\} \right].
\end{equation}
Applying integration by parts formula to the second term in the right-hand side of \eqref{scomposition} we obtain
\[ \int_0^t  \left( \int_0^s H_\alpha (X_\tau)dW^\alpha_\tau \right)
\sigma_\alpha \nabla F (X_s) dW^\alpha_s = \left(\int_{0}^{t} H_\alpha (X_s)dW^\alpha_s \right) \left(\int_{0}^{t} \sigma_\alpha \nabla F (X_s) dW^\alpha_s\right) \]
\[ -\int_0^t (\int_{0}^{s}\sigma_\alpha \nabla F (X_\tau) dW^\alpha_\tau) H_\alpha (X_s)dW^\alpha_s   -\left [\int_{0}^{t} H_\alpha (X_s)dW^\alpha_s, \int_{0}^{t}  \sigma_\alpha \nabla F (X_s) dW^\alpha_s  \right ],  \]
with the quadratic variation equal to
\[  \int_0^t (\sigma^i_\alpha \partial_i F (X_s)) \cdot H_\alpha (X_s) ds. \]
Summing the two stochastic expressions for the two stochastic integrals in \eqref{scomposition}  we get
\[ \left( \int_0^t L (F) (X_s) ds + \int_0^t \sigma_\alpha \nabla F (X_s)
dW^\alpha_s \right) \left( \int_0^t H_\alpha (X_s)dW^\alpha_s
\right)  \]
\[ - \int_0^t \left( \int_0^s L (F) (X_\tau) d\tau + \int_0^s \sigma_\alpha \nabla F
(X_\tau) dW^\alpha_\tau  \right) H_\alpha (X_s)dW^\alpha_s + \]
\[ - \int_0^t (\sigma_\alpha \nabla F (X_s)) \cdot H_\alpha (X_s) ds. \]
Using the following well-known property
\[ \mathbb{E}_{\mathbb{P}} \left[ \int_0^t \left( \int_0^s L (F) (X_\tau) d\tau
 + \int_0^s \sigma_\alpha \nabla F (X_\tau) dW^\alpha_\tau \right) H_\alpha (X_s)dW^\alpha_s \right] = 0 \]
and inserting all the final expressions in \eqref{IP4}, we finally get our integration by parts formula:
\[- m (t) \mathbb{E}_{\mathbb{P}} [L (F) (X_t)] +\mathbb{E}_{\mathbb{P}} \left[
F (X_t) \left( \int_0^t H_\alpha (X_s) dW^\alpha_s \right) \right]
+\mathbb{E}_{\mathbb{P}} [Y (F) (X_t)]-\mathbb{E}_{\mathbb{P}} [Y (F) (X_0)]=0.
\]
\end{proof}

When the Lie's symmetries are of quasi Doob type,  the integration by parts formula becomes simpler and the previous result admits an interesting corollary.
\begin{hypothesis}\label{Hp:B}
	For any solution $X_t$ to the SDE $(\mu,\sigma)$ with deterministic initial condition we have that:
\[ Y (H_{\alpha}) (X_t, t), L (Y^i) (X_t, t),
	\Sigma_{\alpha} (Y^i) (X_t, t), L (Y (Y^i)) (X_t, t), \Sigma_{\alpha} (Y
	(Y^i)) (X_t, t) \in L^2 (\Omega)\]
\end{hypothesis}

\begin{corollary}\label{corollary_integration}
Let $(X.W)$ be a solution to the SDE \eqref{SDE1} and let $(Y, \tau ,H)$ be an infinitesimal stochastic symmetry of quasi Doob type for the SDE. Let us consider the one parameter group  $T_\lambda=(\Phi_{\lambda}, \eta_\lambda, h_\lambda) $ associated with the infinitesimal symmetry according to Proposition \ref{flow}. 
Assuming Hypothesis \eqref{Hp:B}
 the following integration by parts formula holds for every $t \in [0,\mathcal{T}]$
\begin{multline}
- m(t)\mathbb{E}_{\mathbb{{P}}}\left[L({F})(X_t)\right]+\mathbb{E}_{\mathbb{{P}}}\left[F(X_t)\left(k(X_t)-k(X_0)-\int_{0}^{t}L(k(X_s))ds\right)\right]+
\mathbb{E}_{\mathbb{{P}}}[(Y({F}))(X_t)]+\\ -  \mathbb{E}_{\mathbb{P}} [Y (F) (X_0)]=0
\end{multline}	
\end{corollary}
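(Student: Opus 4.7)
The plan is to deduce the corollary from Theorem \ref{integrationbyparts1} by exploiting the special form of $H$ in the quasi Doob case to convert the stochastic integral $\int_{0}^{t}H_{\alpha}(X_s)dW^{\alpha}_s$ into a pathwise expression via It\^o's formula. This is in perfect analogy with Definition \ref{quasidoob}, where the Dol\'eans-Dade exponential of a quasi Doob change of measure is written without any stochastic integral.

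First I would use the structural identity coming from Proposition \ref{PropDoobTransf} and equation \eqref{equa:equatc}: since $(Y,\tau,H)$ is an infinitesimal symmetry of quasi Doob type, there exists a smooth $k\colon M\to\mathbb{R}$ such that $H_{\alpha}(x)=\sigma_{\alpha}^{i}(x)\partial_{i}k(x)$. Applying It\^o's formula to $k(X_s)$ along the solution $(X,W)$ gives
\begin{equation*}
k(X_t)-k(X_0)=\int_{0}^{t}L(k)(X_s)\,ds+\int_{0}^{t}\partial_{i}k(X_s)\sigma^{i}_{\alpha}(X_s)\,dW^{\alpha}_s=\int_{0}^{t}L(k)(X_s)\,ds+\int_{0}^{t}H_{\alpha}(X_s)\,dW^{\alpha}_s,
\end{equation*}
so that
\begin{equation*}
\int_{0}^{t}H_{\alpha}(X_s)\,dW^{\alpha}_s=k(X_t)-k(X_0)-\int_{0}^{t}L(k)(X_s)\,ds.
\end{equation*}
Substituting this identity into the conclusion of Theorem \ref{integrationbyparts1} yields exactly the claim of the corollary; multiplying by $F(X_t)$ inside the expectation replaces the second term of Theorem \ref{integrationbyparts1} with $\mathbb{E}_{\mathbb{P}}[F(X_t)(k(X_t)-k(X_0)-\int_{0}^{t}L(k)(X_s)ds)]$.

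The main obstacle is the discrepancy between Hypothesis \ref{Hp:A}, which Theorem \ref{integrationbyparts1} invokes, and the weaker Hypothesis \ref{Hp:B} used in the corollary: the latter drops the requirement $H_{\alpha}(X_t,t)\in L^{2}(\Omega)$. To close this gap, I would revisit the places in the proof of Theorem \ref{integrationbyparts1} where $H_{\alpha}\in L^{2}$ was needed (essentially to guarantee that $\int_{0}^{t}H_{\alpha}(X_s)dW^{\alpha}_s$ is a well-defined true martingale and that the product/integration-by-parts manipulations yield integrable terms) and replace the stochastic integral throughout by its pathwise representation $k(X_t)-k(X_0)-\int_{0}^{t}L(k)(X_s)ds$. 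Because $k$ is smooth and enters the Radon--Nikodym derivative already in the simpler exponential form from Definition \ref{quasidoob}, the integrability of these terms reduces to regularity properties of $k$ and $L(k)$ rather than of $H$ itself; hence the remaining integrability requirements on $Y(H_{\alpha}),L(Y^{i}),\Sigma_{\alpha}(Y^{i}),L(Y(Y^{i})),\Sigma_{\alpha}(Y(Y^{i}))$ listed in Hypothesis \ref{Hp:B} are sufficient to justify differentiation under the expectation in Lemma \ref{lemma_derivative2} and the applications of It\^o's formula, exactly as in the proof of Theorem \ref{integrationbyparts1}.
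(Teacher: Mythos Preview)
Your substitution argument is correct and leads to the same formula, but it is organized differently from the paper's proof. The paper does not invoke the conclusion of Theorem \ref{integrationbyparts1} and then post-process the stochastic integral via It\^o's formula; instead it goes back to equation \eqref{IP3} and differentiates the Dol\'eans--Dade exponential in its quasi Doob form (Definition \ref{quasidoob}) directly, obtaining $\partial_\lambda(\tfrac{d\mathbb{Q}_\lambda}{d\mathbb{P}})|_{\lambda=0}=k(X_{\mathcal{T}})-k(X_0)-\int_0^{\mathcal{T}}L(k)(X_s)\,ds$ from $\partial_\lambda\mathfrak{h}_\lambda|_{\lambda=0}=k$ and $\partial_\lambda G_{\mathfrak{h}_\lambda}|_{\lambda=0}=L(k)$. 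It then uses the martingale property of $k(X_\cdot)-k(X_0)-\int_0^\cdot L(k)(X_s)\,ds$ and the same integration-by-parts manipulation as in the proof of Theorem \ref{integrationbyparts1} to reach the stated identity.

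The practical difference concerns exactly the hypothesis gap you identified. In the paper's route the term $\int_0^t H_\alpha(X_s)\,dW^\alpha_s$ never appears, so the condition $H_\alpha(X_t,t)\in L^2(\Omega)$ is simply not needed and Hypothesis \ref{Hp:B} is the natural assumption from the outset. Your route first uses Theorem \ref{integrationbyparts1} (hence formally Hypothesis \ref{Hp:A}) and then recognizes, correctly, that to work under Hypothesis \ref{Hp:B} one must revisit the proof and replace the stochastic integral by its pathwise representation --- which is precisely the paper's strategy. So your approach is valid, and your proposed fix for the hypothesis issue converges to the paper's argument; the paper simply takes that route from the start rather than as a repair.
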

\begin{proof}
In the proof of Theorem \ref{integrationbyparts1} the only derivative with respect to $\lambda $ which changes in the first term on the righ-hand side of \eqref{IP3} is the derivative of the Doleans-Dade exponential process as in Definition \ref{quasidoob}, that in the case of quasi Doob symmetries is
\[
\partial_{\lambda}\left(\frac{d\mathbb{Q}}{d\mathbb{P}}\Bigg|_{\mathcal{F}_\mathcal{T}}\right)|_{\lambda=0}=\{\partial_{\lambda}\mathfrak{h}(X_\mathcal{T})|_{\lambda=0}-\partial_{\lambda}\mathfrak{h}(X_0)|_{\lambda=0}-\int_{0}^{\mathcal{T}}\partial_{\lambda} G_{\mathfrak{h}}(X_s)|_{\lambda=0}ds\}.
\]
Since $\partial_{\lambda}\mathfrak{h}(x)|_{\lambda=0}=k(x)$ and $\partial_{\lambda} G_{\mathfrak{h}}(x)|_{\lambda=0}=L(k(x)) $ we have

\[
-m(t)\mathbb{E}_{\mathbb{{P}}}[L({F})(X_t)]+\mathbb{E}_{\mathbb{{P}}}[\left(k(X_{\mathcal{T}})-k(X_0)-\int_{0}^{\mathcal{T}}L(k(X_\tau))d\tau\right) \int_{0}^{t}L({F})(X_s)ds]+
\]
\[
+\mathbb{E}_{\mathbb{{P}}}[\int_{0}^{t}Y(L({F})))(X_s)ds-\mathbb{E}_{\mathbb{P}} [Y (F) (X_0)]=0.
\]
Since $\left(k(X_{\mathcal{T}})-k(X_0)-\int_{0}^{\mathcal{T}}L(k(X_\tau))d\tau\right) $ is
a martingale and integrating by parts we get

\[
\mathbb{E}_{\mathbb{{P}}}[\left(k(X_{\mathcal{T}})-k(X_0)-\int_{0}^{\mathcal{T}}L(k(X_\tau))d\tau\right) \int_{0}^{t}L({F})(X_s)ds]=
\mathbb{E}_{\mathbb{{P}}}\left[F(X_t)\left(k(X_t)-k(X_0)-\int_{0}^{t}L(k(X_s))ds\right)  \right].
\]
\end{proof}

\section{Some technical considerations}

In this section we provide some technical results necessary to prove our integration by parts formula. In particular, after giving a non-explosion result for the solution to a SDE, we address the problem of how we can verify the conditions that allow us to take the derivative with respect to $\lambda$ in our geometric-analytic setting.
Since one of the key ingredients is the theory of Lyapunov functions, we begin by recalling some important facts about this topic.
\vskip 0.5 cm
{\bf Lyapunov condition}: Given an infinitesimal generator $L$ of the form \eqref{eqGENERATORL}, there exists $ \varphi \in C^{1,2}([0,T]\times \Real^n)$ such that $ \varphi \geq 0 $ and
\[
\lim_{r \rightarrow \infty} (\inf_{0 \leq t \leq T}\varphi(t,x))=\infty
\]
and, for some constant $M$, a.e. on $[0,T]\times \Real^n$
\[
(\partial_{t}+L)\varphi (t,x) \leq M \varphi(t,x).
\]
\vskip 0.5 cm
The function $\varphi$ in the above Lyapunov condition is called a Lyapunov function.
\vskip 0.5 cm
Let $\chi= \inf \left\{t \geq 0: X_t \notin \R^n \right\} $ be the life-time of the diffusion process. Non-explosion results for a diffusion process by means of Lyapunov functions are very interesting and can be stated with a linear probabilistic proof which we report here for completeness (see, e.g., \cite{lee2020analytic,StroockVaradhan}).

\begin{theorem}\label{Lyapunov_function_theorem}
	Let $a_{i.j}:=(\sigma \sigma^T)^{ij} \in H^{1,p}_{loc}(\Real^n)\cup C(\Real^n)$ and $\mu \in L^p_{loc}(\Real^n,\Real^n), p\in (n,\infty)$. A sufficient condition for the martingale problem associated with the infinitesimal generator $L $ (\eqref{eqGENERATORL}) to be well-posed is that, for each $T>0 $, there exist a number $M=M_T>0 $ and a non-negative function $ \varphi \in C^{1,2}([0,T]\times \Real^n ) $ such that $\varphi$ is a Lyapunov function on $[0,T]\times \Real^n  $. In this case the solution process is non explosive and, for any $(t,x) \in [0,T]\times\Real^n $, it holds
	\[
	\mathbb{E}_x[\varphi(t,X_t)]\leq \exp(Mt)\varphi(0,X_0), \quad t \geq 0.
	\]
\end{theorem}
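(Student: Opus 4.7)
The plan is to combine the classical local well-posedness theory of the martingale problem under the regularity $a_{ij}\in H^{1,p}_{loc}$, $\mu\in L^p_{loc}$ with $p>n$ (Krylov / Stroock--Varadhan style results) together with a Lyapunov argument that rules out explosion. The regularity hypothesis is the right one to obtain both existence and uniqueness on every bounded domain $B_R\subset\Real^n$: the embedding $W^{1,p}\hookrightarrow C$ for $p>n$ and the Aleksandrov--Krylov estimates give well-posedness of the stopped martingale problem up to the exit time $\tau_R=\inf\{t\geq 0:|X_t|\geq R\}$. The remaining work, and the content of the Lyapunov condition, is to show that $\tau_R\nearrow\infty$ almost surely.

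First I would fix $T>0$ and consider the process $Z_t:=e^{-Mt}\varphi(t,X_t)$. Using the martingale problem formulation applied to $\varphi\in C^{1,2}$, for every $R>0$ the stopped process $Z_{t\wedge\tau_R}$ differs from a local martingale by the absolutely continuous term
\[
\int_0^{t\wedge\tau_R}e^{-Ms}\bigl[(\partial_s+L)\varphi(s,X_s)-M\varphi(s,X_s)\bigr]\,ds,
\]
which by the Lyapunov inequality is non-positive. Hence $Z_{\cdot\wedge\tau_R}$ is a non-negative local supermartingale; since it is bounded on $[0,T\wedge\tau_R]$ by $\sup_{[0,T]\times\bar B_R}\varphi<\infty$, optional stopping applies and yields
\[
\mathbb{E}_x\bigl[e^{-M(t\wedge\tau_R)}\varphi(t\wedge\tau_R,X_{t\wedge\tau_R})\bigr]\leq \varphi(0,x),\qquad t\in[0,T].
\]

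Next I would exploit the coercivity of $\varphi$. Setting $\psi(R,T):=\inf_{s\in[0,T],\,|y|\geq R}\varphi(s,y)$, which tends to $+\infty$ as $R\to\infty$ by hypothesis, I restrict the previous inequality to $\{\tau_R\leq t\}$ to obtain
\[
e^{-MT}\psi(R,T)\,\mathbb{P}_x(\tau_R\leq t)\leq \varphi(0,x),
\]
so $\mathbb{P}_x(\tau_R\leq T)\to 0$ as $R\to\infty$. Consequently the life-time $\chi=\lim_R \tau_R$ satisfies $\mathbb{P}_x(\chi\leq T)=0$ for every $T$, i.e.\ the process is non-explosive. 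Combined with the local well-posedness this upgrades to global well-posedness of the martingale problem. Finally, applying Fatou's lemma to $e^{-M(t\wedge\tau_R)}\varphi(t\wedge\tau_R,X_{t\wedge\tau_R})$ as $R\to\infty$ (using continuity of paths and $\tau_R\to\infty$ a.s.) produces $\mathbb{E}_x[e^{-Mt}\varphi(t,X_t)]\leq\varphi(0,x)$, which rearranges to the quantitative bound in the statement.

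The main obstacle is the low regularity of the coefficients: strictly speaking $L$ is only defined in an $L^p$ sense, so one cannot apply the classical Itô formula verbatim to $\varphi(t,X_t)$. The cleanest way around this is to invoke Krylov's extension of Itô's formula to functions in $W^{1,p}_{loc}([0,T]\times\Real^n)$ with $p>n$, which is valid precisely under the stated hypotheses on $a_{ij}$ and $\mu$; alternatively one can mollify $\varphi$ (and the coefficients), prove the supermartingale property for the smooth approximations on compact sets, and pass to the limit using the Krylov $L^p$ estimate to control the remainder. Once Itô's formula is available in this generalized sense, the remaining probabilistic steps—optional stopping, the coercivity bound, and Fatou's lemma—are standard.
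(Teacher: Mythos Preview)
Your proposal is correct and follows essentially the same Lyapunov--supermartingale scheme as the paper: show that $e^{-Mt}\varphi(t,X_{t\wedge\tau_R})$ is a supermartingale via the Lyapunov inequality, use the coercivity $\inf_{[0,T]}\varphi(\cdot,y)\to\infty$ to bound $\mathbb{P}_x(\tau_R\le t)$ and deduce non-explosion, then apply Fatou as $R\to\infty$ to obtain the quantitative bound. The only difference is that you are more explicit than the paper about two points it leaves implicit: the local well-posedness input (needed for the ``well-posed'' claim in the statement, but not argued in the paper's proof) and the justification of It\^o's formula under the low-regularity hypotheses via Krylov's $L^p$ theory, where the paper simply writes down the classical It\^o expansion.
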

\begin{proof}
	Set $\tau_r=inf\left\{ t>0: X_t \in \R^n| B_r\right\} $ where $B_r$ denotes the ball of radius $r$. Applying It\^o formula we get
	\begin{align*}
	\exp(-Mt) \varphi (t, X_{\min (t, \tau_r)})=&\varphi(x)+\int_{0}^{t}\exp(-Ms) \partial_i \varphi(s,X_s)\sigma^i_j(X_s)dW^j_s \\
	&+\int_{0}^{t }\exp(-Ms)(-M\varphi \partial_{s}\varphi + L\varphi)(s,X_s)ds
	\end{align*}
	Since, by hypothesis on $[0,T]\times \Real^n  $
	\[
	\partial_{t}\phi(t,x)+L\phi (t,x) - M \phi(t,x)\leq 0,
	\]
$	(\exp(-Mt) \varphi (t, X_{\min (t, \tau_r)})) $ is a supermartingale, which implies
	\begin{align*}
	\varphi(x)\leq& \mathbb{E}_x[\exp(-Mt) \varphi (t, X_{\min (t, \tau_r)})]\leq
	\mathbb{E}[\exp(-Mt) \varphi (t, X_{\min (t, \tau_r)}){1}_{\tau_r\leq t}]\\
	\leq& \exp(-Mt) \varphi
	 \inf_{t \in [0, T]} \mathbb{P}[\tau_r\leq t].
	\end{align*}
	Since $\mathbb{P}[\lim_{r \rightarrow \infty}\tau_r=\chi]=1  $ (see Lemma 3.17 in \cite{lee2020analytic}),  for every fixed t
	\[
	\mathbb{P}_x[\chi \leq t]=\lim_{r \rightarrow \infty}\mathbb{P}_x[\tau_r \leq t]\leq \lim_{r}\frac{\exp (Mt)\varphi(x)}{\inf_{t \in [0, T]}\varphi}=0.
	\]
	Sending $t \rightarrow \infty$  the non-explosivity follows. Furthermore, since $\tau_r \rightarrow \chi $ a.s.
	\begin{align*}
	\mathbb{E}_x[\exp(-Mt) \varphi (t, X_{ t})]=&\mathbb{E}_x[\liminf_r \exp(-Mt) \varphi (t, X_{\min (t, \tau_r)})]\\
	\leq& \liminf_r \mathbb{E}_x[ \exp(-Mt) \varphi (t, X_{\min (t, \tau_r)})]\leq \varphi(x),
	\end{align*}
	where Fatou lemma and supermartingale property has been used.
\end{proof}
	
Let us now consider the problem of how we can verify the conditions that allow us to take the derivative with respect to $\lambda$ in our geometric-analytic setting.
Given  the SDE $(\mu, \sigma)$, we write
	\[ \Sigma_{\alpha} = \sum_{j = 1}^n \sigma_{\alpha}^j (x, z) \partial_{x^j}, \qquad \mu=\mu^i\partial{x^i} \]
	and the operator
	\[ L = \partial_z + \sum_{\alpha = 1}^m \sum_{j, i = 1}^n \sigma_{\alpha}^i
	(x, z) \sigma_{\alpha}^j (x, z) \partial_{x^i x^j} + \sum_{j = 1}^n \mu^j
	(x, z) \partial_{x^j} . \]
	
	We recall that if $(Y, \tau, H)$ is a symmetry of the SDE $(\mu, \sigma)$ we
	have
	\[ [Y, \Sigma_{\alpha}] = - \frac{1}{2} \tau \Sigma_{\alpha} \]
	and
	\[ [Y, L] - \sum_{\alpha = 1}^m H_{\alpha} \Sigma_{\alpha} = - \tau L. \]
	\begin{theorem}\label{Paola}
		Let $(\mu, \sigma)$ be a SDE with symmetry $(Y, \tau, H)$. Then, for any $f
		\in C^2 (\mathbb{R} \times \mathbb{R}^n)$, we have
		\[ \left( \frac{1}{\sqrt{\eta_{\lambda}}} \Sigma_{\alpha} (f \circ
		\Phi_{\lambda}) \right) = (\Sigma_{\alpha} (f)) \circ \Phi_{\lambda} \]
		\[ \left( \frac{1}{\eta_{\lambda}} L (f \circ \Phi_{\lambda}) \right) = \mu
		(f) \circ \Phi_{\lambda} - \frac{1}{\sqrt{\eta_{\lambda}}} \sum_{\alpha
			= 1}^m h_{\lambda, \alpha}  (\Sigma_{\alpha} (f) \circ \Phi_{\lambda}). \]
	\end{theorem}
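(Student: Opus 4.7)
My plan is to derive both identities by direct pointwise computation, using the chain rule and then invoking the finite-symmetry characterization of Proposition \ref{prop:pra} as the only external input. The first identity follows from the diffusion-symmetry equation alone, while the second requires both symmetry equations together with a careful recombination of the first- and second-order chain-rule contributions.

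For the first identity, the chain rule immediately gives
\[
\Sigma_\alpha(f \circ \Phi_\lambda) = (\sigma_\alpha^i \partial_i \Phi_\lambda^k)(\partial_k f) \circ \Phi_\lambda = (\Sigma_\alpha \Phi_\lambda^k)\,(\partial_k f) \circ \Phi_\lambda.
\]
The componentwise reading of Proposition \ref{prop:pra} yields $\Sigma_\alpha \Phi_\lambda^k = \sqrt{\eta_\lambda}\,(\sigma_\alpha^k \circ \Phi_\lambda)$, and substituting factorises $\sqrt{\eta_\lambda}$ in front of $(\Sigma_\alpha f) \circ \Phi_\lambda$, which finishes identity (1) after dividing by $\sqrt{\eta_\lambda}$.

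For the second identity, I would expand $L(f \circ \Phi_\lambda)$ term by term. The second-order chain rule produces
\[
\sigma_\alpha^i \sigma_\alpha^j \partial_{ij}(f \circ \Phi_\lambda) = (\Sigma_\alpha \Phi_\lambda^k)(\Sigma_\alpha \Phi_\lambda^l)(\partial_{kl} f) \circ \Phi_\lambda + (\sigma_\alpha^i \sigma_\alpha^j \partial_{ij} \Phi_\lambda^k)(\partial_k f) \circ \Phi_\lambda.
\]
The first piece collapses to $\eta_\lambda (\sigma^k \sigma^l \partial_{kl} f) \circ \Phi_\lambda$ via identity (1). The second piece, combined with the drift action $\mu^j \partial_j(f \circ \Phi_\lambda) = \mu^j \partial_j \Phi_\lambda^k \cdot (\partial_k f) \circ \Phi_\lambda$ and the (trivial, since $\Phi_\lambda$ does not depend on $z$) time contribution, assembles into $L(\Phi_\lambda^k)\,(\partial_k f) \circ \Phi_\lambda$. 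The drift-symmetry equation from Proposition \ref{prop:pra}, namely $L(\Phi_\lambda^k) = \eta_\lambda \mu^k \circ \Phi_\lambda - \sqrt{\eta_\lambda}\, h_{\lambda,\alpha}(\sigma_\alpha^k \circ \Phi_\lambda)$, then replaces $L(\Phi_\lambda^k)$, and regrouping delivers
\[
L(f \circ \Phi_\lambda) = \eta_\lambda\,(Lf) \circ \Phi_\lambda - \sqrt{\eta_\lambda}\, h_{\lambda,\alpha}(\Sigma_\alpha f) \circ \Phi_\lambda,
\]
so dividing by $\eta_\lambda$ yields the stated identity (reading $\mu(f)\circ \Phi_\lambda$ as the pulled-back action of the generator, which is exactly what is forced by the specialisation $f = x^k$, reducing the identity to the drift-symmetry equation itself).

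The main obstacle is the bookkeeping in the second identity: the second-order chain rule entangles the Hessian of $f$ with the Hessian of $\Phi_\lambda$, and the decisive algebraic manoeuvre is to recognise that $(\sigma_\alpha^i \sigma_\alpha^j \partial_{ij} \Phi_\lambda^k + \mu^j \partial_j \Phi_\lambda^k)(\partial_k f)\circ \Phi_\lambda$ is exactly $L(\Phi_\lambda^k)(\partial_k f)\circ \Phi_\lambda$, so that the first equation of Proposition \ref{prop:pra} can be invoked to close the computation. Keeping derivatives of $f$ evaluated at $\Phi_\lambda(x)$ cleanly separated from derivatives of $\Phi_\lambda$ evaluated at $x$ is the only genuine source of index errors, and in a full write-up I would spell out each summation index explicitly while carrying out the calculation.
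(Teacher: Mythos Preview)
Your approach is correct and follows the same strategy as the paper: both proofs rely solely on the chain rule together with the finite-symmetry characterization of Proposition~\ref{prop:pra}. For the first identity your argument and the paper's are essentially identical.

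For the second identity there is a genuine difference in presentation. The paper invokes Proposition~\ref{prop:pra} directly ``for any $f\in C^2$'', asserting without further justification that
\[
\frac{1}{\eta_\lambda}\Bigl[L(f\circ\Phi_\lambda)+\sum_{\alpha} h_{\lambda,\alpha}\,\Sigma_\alpha(f\circ\Phi_\lambda)\Bigr]=\mu(f)\circ\Phi_\lambda,
\]
whereas Proposition~\ref{prop:pra} as stated is a componentwise identity for $\Phi_\lambda$ itself, not for $f\circ\Phi_\lambda$. Your explicit second-order chain-rule expansion---splitting off the Hessian-of-$f$ contribution (handled via identity~(1)) from the $L(\Phi_\lambda^k)(\partial_k f)\circ\Phi_\lambda$ contribution (handled via the drift equation of Proposition~\ref{prop:pra})---is precisely the missing step that turns the paper's assertion into an honest computation. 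In this sense your write-up is the more complete of the two.

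You also correctly flag the notational issue: your computation produces $(Lf)\circ\Phi_\lambda$ rather than $\mu(f)\circ\Phi_\lambda$, and these agree on coordinate functions $f=x^k$ (where second derivatives vanish) but not in general. The statement as printed is consistent only if $\mu(f)$ is read as the full generator action, which is exactly what your derivation forces; your parenthetical remark handles this appropriately.
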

	
	\begin{proof}
		Using Proposition \ref{prop:pra} we have that, for any $f \in C^2 (\mathbb{R} \times \mathbb{R}^n)$,
		\[\left( \sigma^i_{\alpha}\frac{\partial f}{\partial x^i}\right) \circ \Phi_{\lambda} = \frac{1}{\sqrt{\eta_{\lambda}}} \left( \sigma_{\alpha}^i \frac{\partial \Phi^k_ {\lambda}}{\partial x^i}\frac{\partial f}{\partial x^k} \right) \]
		and rewriting this expression in terms of the vector fields $\Sigma_{\alpha}$ and $\mu$ we get
		\[  (\Sigma_{\alpha} (f)) \circ \Phi_{\lambda} =  \frac{1}{\sqrt{\eta_{\lambda}}} \Sigma_{\alpha} (f \circ
		\Phi_{\lambda}). \]
		In order to prove the second condition we start again from Proposition \ref{prop:pra}  which ensures that, for any $f
		\in C^2 (\mathbb{R} \times \mathbb{R}^n)$, we have
		\[  \frac{1}{\eta_{\lambda}}\left[  L (f\circ \Phi_{\lambda}) + \sum_{\alpha=1}^m h_{\lambda, \alpha }\sigma_{\alpha}^i \frac{\partial \Phi^k_{\lambda}}{\partial x^i}\frac{\partial f}{\partial x^k}\right]=\left( \mu^i\frac{\partial f}{\partial x^i} \right)\circ \Phi_{\lambda}.
		\]
		When we write the previous expression in terms of the vector fields $\Sigma_{\alpha}$ and $\mu$ we get
		\[\frac{1}{\eta_{\lambda}}\left[  L (f\circ \Phi_{\lambda}) + \sum_{\alpha=1}^m h_{\lambda, \alpha }\Sigma_{\alpha}(f\circ  \Phi_{\lambda}) \right]= \mu(f) \circ \Phi_{\lambda} \]
		and using the relation between $\Sigma_{\alpha}(f\circ  \Phi_{\lambda})$ and $\Sigma_{\alpha}(f)\circ  \Phi_{\lambda}$ we have
		\[  \frac{1}{\eta_{\lambda}}  L (f\circ \Phi_{\lambda}) + \frac{1}{\sqrt{\eta_{\lambda}}}\sum_{\alpha=1}^m h_{\lambda, \alpha }(\Sigma_{\alpha}(f)\circ \Phi_{\lambda})= \mu(f) \circ \Phi_{\lambda} \]
		and this concludes the proof.
	\end{proof}

The following theorem establishes how to test the conditions under which it is possible to take the derivative inside the expectation in the proof of Theorem \ref{integrationbyparts1}. Let us introduce
\begin{equation}\label{eq:P} P (\lambda) = \frac{d \mathbb{Q}_{\lambda}}{d \mathbb{P}}
\left( \int_0^{f_{-\lambda }(t)} L(F) \circ
\Phi_{\lambda} (X_s, s) f^\prime_\lambda(s) d s \right). \end{equation}

\begin{theorem}\label{technicalderivability}
	Consider a SDE $(\mu, \sigma)$ with a symmetry $(Y, \tau, H)$ satisfying the Hypothesis \ref{Hp:A}. Introducing the one-parameter group $T_\lambda=(\Phi_\lambda,\eta_\lambda,h_\lambda)$ associated with the symmetry, one has that
\[
\mathbb{E}_{\mathbb{{P}}}\left[|\partial^2_{\lambda} P(\lambda)|\right]< \infty,
\] 	
where $P(\lambda)$ is given by the expression \eqref{eq:P}.
\end{theorem}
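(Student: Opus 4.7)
The plan is to expand $P(\lambda)=Z_\lambda\,J_\lambda$, where
\[
Z_\lambda=\left.\frac{d\mathbb{Q}_\lambda}{d\mathbb{P}}\right|_{\mathcal{F}_\mathcal{T}},\qquad J_\lambda=\int_0^{f_{-\lambda}(t)}L(F)\circ\Phi_\lambda(X_s,s)\,f'_\lambda(s)\,ds,
\]
apply the Leibniz rule,
\[
\partial_\lambda^2 P=(\partial_\lambda^2 Z_\lambda)J_\lambda+2(\partial_\lambda Z_\lambda)(\partial_\lambda J_\lambda)+Z_\lambda(\partial_\lambda^2 J_\lambda),
\]
and estimate $\mathbb{E}_{\mathbb{P}}[|\partial_\lambda^2 P|]$ by repeated Cauchy--Schwarz, reducing the claim to showing that each of the six factors appearing in these products lies in a suitable Lebesgue space.

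For the density factor, Lemma \ref{lemm:lea}, applied to the non-explosive vector field $h_\lambda$ associated with the symmetry, guarantees that $Z_\lambda$ is a genuine martingale and in particular has finite $L^p$ moments for every $p\ge 1$. Differentiating the Doleans--Dade exponential and using the flow equations of Proposition \ref{flow}, one obtains explicit expressions for $\partial_\lambda Z_\lambda$ and $\partial_\lambda^2 Z_\lambda$ in which the only process-dependent factors are $h_{\lambda,\alpha}(X_s)$, $H_\alpha(X_s)$, $Y(H_\alpha)(X_s)$, and $\tau(X_s)H_\alpha(X_s)$; Kunita's Lemma \ref{lemma-derivative3} justifies moving derivatives inside the Ito integral that defines $Z_\lambda$. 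Hypothesis \ref{Hp:A} then forces each of these quantities into $L^2(\Omega)$, which is enough after pairing with the $L^p$ bound on $Z_\lambda$.

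For the integral factor, the key tool is Theorem \ref{Paola}. One first rewrites
\[
L(F)\circ\Phi_\lambda=\frac{1}{\eta_\lambda}L(F\circ\Phi_\lambda)+\frac{1}{\sqrt{\eta_\lambda}}\sum_\alpha h_{\lambda,\alpha}\,\Sigma_\alpha(F\circ\Phi_\lambda),
\]
so that after composition only derivatives of the explicitly bounded $F$ appear, together with iterated derivatives of the flow $\Phi_\lambda$. Differentiating twice in $\lambda$, one repeatedly uses $\partial_\lambda\Phi_\lambda=Y(\Phi_\lambda)$, together with the commutation relations $[Y,L]=-\tau L+H_\alpha\Sigma_\alpha$ recalled in the proof of Theorem \ref{integrationbyparts1} and $[Y,\Sigma_\alpha]=-\tfrac{1}{2}\tau\Sigma_\alpha$, to push every $Y$ past each occurrence of $L$ or $\Sigma_\alpha$. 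The differentiated upper limit $f_{-\lambda}(t)$ and the Jacobian $f'_\lambda=\eta_\lambda$ contribute only bounded deterministic factors built out of $m$ and $\tau$. The upshot is that $\partial_\lambda^2 J_\lambda$ becomes a finite sum of Lebesgue integrals whose integrands are products of bounded derivatives of $F$ with exactly the six functionals of the process listed in Hypothesis \ref{Hp:A}, evaluated at $X_s$.

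The main obstacle is the careful algebraic bookkeeping in the second derivative of $L(F)\circ\Phi_\lambda$: one must verify that this derivation produces \emph{no} third or higher derivative of $F$, and that the only coefficients of the bounded derivatives of $F$ are the quantities $H_\alpha$, $Y(H_\alpha)$, $L(Y^i)$, $\Sigma_\alpha(Y^i)$, $L(Y(Y^i))$, $\Sigma_\alpha(Y(Y^i))$. This is precisely the algebraic role of the symmetry identities, and explains why Hypothesis \ref{Hp:A} has that exact form. Once this reduction is in place, the estimate is closed by Cauchy--Schwarz, pairing the $L^p$ bounds on $Z_\lambda$ and its derivatives with the $L^2$ bounds on the integrand supplied by Hypothesis \ref{Hp:A}.
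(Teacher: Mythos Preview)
Your outline has a genuine gap. The assertion that Lemma \ref{lemm:lea} gives $Z_\lambda\in L^p(\mathbb{P})$ for all $p\ge 1$ is unfounded: that lemma only guarantees that the Doleans--Dade exponential is a true martingale, i.e.\ $\mathbb{E}_{\mathbb{P}}[Z_\lambda]=1$. Nothing in the hypotheses controls $\mathbb{E}_{\mathbb{P}}[Z_\lambda^p]$ for $p>1$, and for a general non-explosive $h_\lambda$ such moments can easily be infinite. Since your entire estimation scheme is to pair $Z_\lambda$ (and its $\lambda$-derivatives, which carry $Z_\lambda$ as a multiplicative factor) against the $L^2$ integrands via Cauchy--Schwarz, the argument does not close. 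A related and equally serious problem is that Hypothesis \ref{Hp:A} gives $L^2(\mathbb{P})$ control of $H_\alpha(X_t)$, $Y(H_\alpha)(X_t)$, etc., evaluated along the \emph{original} process $X$, whereas your expansion of $\partial_\lambda^2 J_\lambda$ produces these same functions composed with $\Phi_\lambda$; you have no a priori bound on, say, $H_\alpha\circ\Phi_\lambda(X_s)$ in $L^2(\mathbb{P})$.

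The paper's proof avoids both issues by a single device you have not used: rather than separating $Z_\lambda$ and bounding it in $L^p$, one absorbs it back into the measure, writing $\mathbb{E}_{\mathbb{P}}[Z_\lambda\cdot G]=\mathbb{E}_{\mathbb{Q}_\lambda}[G]$. The point is that each term of $\partial_\lambda^2 P(\lambda)$, after applying the flow equations, is of the form $Z_\lambda$ times a functional of $X^\lambda_{t_\lambda}=\Phi_\lambda(X_{f_{-\lambda}(t_\lambda)})$ and of the $\mathbb{Q}_\lambda$-Brownian motion $W^{\alpha,\lambda}$. Because $(Y,\tau,H)$ is a symmetry, the law of $(X^\lambda_{t_\lambda},W^{\alpha,\lambda})$ under $\mathbb{Q}_\lambda$ equals the law of $(X_t,W^\alpha)$ under $\mathbb{P}$. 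Hence each $\mathbb{Q}_\lambda$-expectation becomes a $\mathbb{P}$-expectation of the \emph{same} functional evaluated at the original $X$, with the $\lambda$-dependence gone; at that point H\"older and It\^o isometry reduce everything to exactly the six $L^2(\mathbb{P})$ quantities listed in Hypothesis \ref{Hp:A}. This change-of-measure plus symmetry step is the essential idea you are missing, and it is precisely what makes the hypothesis sufficient without any higher integrability of $Z_\lambda$.
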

\begin{proof}
Let us derive $P(\lambda)$ with respect to $\lambda$  obtaining
\[ \partial_{\lambda} P (\lambda) = P (\lambda) \left[-m(t)+\int_0^T \partial_{\lambda} h_{\alpha, \lambda} (X_t, t) d
W^{\alpha}_t - \int_0^T h_{\alpha, \lambda} (X_t, t) \partial_{\lambda}
h_{\alpha, \lambda} (X_t, t) d t \right] \]
\[ + \frac{d \mathbb{Q}_{\lambda}}{d \mathbb{P}} \left( \int_0^{f_{-\lambda }(t)}  Y(L(F)) \circ
\Phi_{\lambda} (X_s, s) f^\prime_\lambda(s) d s \right)+ \frac{d \mathbb{Q}_{\lambda}}{d \mathbb{P}} \left( \int_0^{f_{-\lambda }(t)}  (L(F)) \circ \Phi_{\lambda} (X_s, s)  \partial_{\lambda}f^{\prime}_\lambda(s) d s \right)  \]
The second derivative becomes:
\begin{align*}
\partial_{\lambda}^2 P (\lambda) =& \frac{d \mathbb{Q}_{\lambda}}{d \mathbb{P}} \left( \int_0^{f_{-\lambda} (t)} L(F) \circ
\Phi_{\lambda} (X_s, s)f^{\prime}_\lambda(s) d s \right) \\
&\left[-m(t)+\int_0^T \partial_{\lambda} h_{\alpha, \lambda} (X_t, t) d
W^{\alpha}_t - \int_0^T h_{\alpha, \lambda} (X_t, t) \partial_{\lambda}
h_{\alpha, \lambda} (X_t, t) d t \right]^2 +\\
& +  \frac{d \mathbb{Q}_{\lambda}}{d \mathbb{P}} \left( \int_0^{f_{-\lambda}  (t)} Y(L(F)) \circ
 \Phi_{\lambda} (X_s, s)f^{\prime}_\lambda(s) d s \right)  \left[-m(t)+\int_0^T \partial_{\lambda} h_{\alpha, \lambda} (X_t, t) dW^{\alpha}_t \right.\\
 &\left. - \int_0^T h_{\alpha, \lambda} (X_t, t) \partial_{\lambda}
 h_{\alpha, \lambda} (X_t, t) d t \right]+ 2\frac{d \mathbb{Q}_{\lambda}}{d \mathbb{P}} \left( \int_0^{f_{-\lambda}  (t)} L(F) \circ
\Phi_{\lambda} (X_s, s) \partial_{\lambda}f^{\prime}_\lambda(s) d s \right)\times\\
&\times\left[-m(t)+\int_0^T \partial_{\lambda} h_{\alpha, \lambda} (X_t, t) d
W^{\alpha}_t - \int_0^T h_{\alpha, \lambda} (X_t, t) \partial_{\lambda}
h_{\alpha, \lambda} (X_t, t) d t \right]\\
&+ \frac{d \mathbb{Q}_{\lambda}}{d \mathbb{P}}\left( \int_0^{f_{-\lambda}  (t)} Y(L(F)) \circ
 \Phi_{\lambda} (X_s, s)f^{\prime}_\lambda(s) d s \right)\left[ \int_0^T \partial^2_{\lambda} h_{\alpha, \lambda} d
W^{\alpha}_t -\int_0^T  (\partial_{\lambda}
h_{\alpha, \lambda})^2 d t \right.\\
&\left. - \int_0^T h_{\alpha, \lambda} \partial^2_{\lambda}
h_{\alpha, \lambda} d t \right]+ 2\frac{d \mathbb{Q}_{\lambda}}{d \mathbb{P}} \left( \int_0^{f_{-\lambda} (t)} Y(Y(L(F))) \circ
\Phi_{\lambda} (X_s, s)f^{\prime}_\lambda(s) d s \right)\\
&+ \frac{d \mathbb{Q}_{\lambda}}{d \mathbb{P}} \left( \int_0^{f_{-\lambda}  (t)} Y(L(F)) \circ
\Phi_{\lambda} (X_s, s) \partial_{\lambda}f^{\prime}_\lambda(s) d s + \int_0^{f_{-\lambda}  (t)} L(F) \circ
\Phi_{\lambda} (X_s, s)\partial^2_{\lambda}f^{\prime}_\lambda(s) d s\right).
\end{align*}
To prove the theorem we have to verify  that each term of the previous sum is in $L^1$.
Starting from the first term of the second derivative of $P(\lambda) $ and using Radon-Nikodym theorem, we have that the expectation with respect to $\mathbb{P} $ is equal to
\[ \mathbb{E}_{\mathbb{Q}_\lambda}\left[m^2(t) \left( \int_0^{f_{-\lambda}  (t)} L(F) \circ
\Phi_{\lambda} (X_s, s)f^{\prime}_\lambda(s) d s \right)\right]-\mathbb{E}_{\mathbb{Q}_\lambda}\left[m(t )\left(\int_0^{\mathcal{T}} \frac{1}{\eta_{\lambda}}  H_{\alpha} (X_t, t) dW^{\alpha,\lambda}_{t_\lambda}\right) \right]\]
\[+ \mathbb{E}_{\mathbb{Q}_\lambda}\left[ \left( \int_0^{f_{-\lambda}  (t)} L(F) \circ \Phi_{\lambda} (X_s, s)f^{\prime}_\lambda(s) d s \right) \left(\int_0^{\mathcal{T}} \frac{1}{\eta_{\lambda}} H_{\alpha} (X_t, t)  dW^{\alpha,\lambda}_{t_\lambda}\right)^2\right],\]
where Proposition \ref{flow} and Theorem \ref{theo:theg} have been used.
Applying our time change to the stochastic integral (see, e.g., \cite{Oksendal}) we obtain
\[  \int_0^{\mathcal{T}} \frac{1}{\eta_{\lambda}} H_\alpha
\circ \Phi_{\lambda} (X_{t}, t)
 d W^{\alpha, \lambda}_{t_{\lambda}} =
 \int_0^{f_{- \lambda} (\mathcal{T})} \frac{1}{\eta_{\lambda}} H_\alpha
\circ \Phi_{\lambda} (X_{f_{- \lambda} (t_{\lambda})}, f_{- \lambda}
(t_{\lambda})) d W^{\alpha, \lambda}_{t}, \]
where
\[ t_{\lambda} {= f }_{\lambda} (t), \quad d W^{\alpha,
	\lambda}_{t_{\lambda}} = \sqrt{\eta_{\lambda}} (d W^{\alpha}_{f_{-
		\lambda} (t_{\lambda})} - h_{\alpha, \lambda} (X_{f_{- \lambda}
	(t_{\lambda})}, f_{- \lambda} (t_{\lambda})) d t_{\lambda}). \]
By Girsanov theorem, and by definition of general stochastic transformation,
we have that $W^{\alpha, \lambda}_{t_{\lambda}}$ is a $\mathbb{Q}_{\lambda}-$ Brownian motion.
On the other hand, by writing
\[ X_{t_{\lambda}}^{\lambda} = \Phi_{\lambda} (X_{f_{- \lambda}
	(t_{\lambda})}, f_{- \lambda} (t_{\lambda})) \]
and since $(Y, \tau, H)$ is a
symmetry of $(\mu, \sigma)$, for example for the second term, we obtain
\[\mathbb{E}_{\mathbb{Q}_{\lambda}}\left[ \left( \int_0^{{f_{- \lambda} (t)}} L(F) \circ \Phi_{\lambda} (X_s, s)(s) f^{\prime}_\lambda(s) d s \right)\int_0^{f_{- \lambda} (\mathcal{T})} \frac{1}{\eta_{\lambda}} H_\alpha(X^{\lambda}_{t_{\lambda}},t_{\lambda})
 d W^{\alpha, \lambda}_{t_{\lambda}} \right] \]
\[ =\mathbb{E}_{\mathbb{P}} \left[ \left( \int_0^{t} L(F) (X_s, s) d s \right)\int_0^{\mathcal{T}} H_\alpha
 (X_{ t}, t) d W^{\alpha}_{t} \right] \]
because the process $X_t$ under $\mathbb{P}$ has the same
distribution as the process $X^{\lambda}_{t_{\lambda}}$ under $\mathbb{Q}_{\lambda}$.
Now by Holder's inequality
\[
\left|\mathbb{E}_{\mathbb{P}} \left[ \left( \int_0^{\mathcal{T}} H_{\alpha} (X_t) d
W^{\alpha}_t \right) \left( \int_0^{t} L(F)(X_s, s) d s \right)\right]\right| \leq
\]
\[
\left(\mathbb{E}_{\mathbb{P}} \left[ \left( \int_0^{\mathcal{T}} H_{\alpha} (X_t) d
W^{\alpha}_t \right)^2 \right]\right)^{1/2}\left(\mathbb{E}_{\mathbb{P}} \left[ \left( \int_0^{t}  L(F) (X_s, s) d s  \right)^2 \right]\right)^{1/2}\leq
\]
\[
\leq||F||_{C^2} \left(\mathbb{E}_{\mathbb{P}} \left[  \int_0^{\mathcal{T}} | H_{\alpha} (X_t)|^2 dt  \right]\right)^{1/2} \left(\mathbb{E}_{\mathbb{P}} \left[ \int_0^{t}  |\mu (X_s, s)|^2 d s   \right]+\mathbb{E}_{\mathbb{P}}\left[ \int_0^{t}  |\sigma (X_s, s)|^2 d s   \right]\right)^{1/2} \]
where It\^o's isometry has been used.

Being $ H_{\alpha}, \mu, \sigma \in L^2 $  by Hypothesis A, we have proved that the first term of $\partial^2_{\lambda} P(\lambda) $ is in $L^1$.

Let us consider the  fourth term of $\partial_{\lambda}^2 P (\lambda)  $. By Proposition \ref{flow}
\[ \partial_{\lambda}^2 h_{\alpha, \lambda} = \partial_{\lambda} \left(
\frac{1}{\sqrt{\eta_{\lambda}}} H_{\alpha} \circ \Phi_{\lambda} \right) = -
\frac{\tau (f_{\lambda} (t))}{2 \sqrt{\eta_{\lambda}}} H_{\alpha}  \circ
\Phi_{\lambda} + \frac{1}{\sqrt{\eta_{\lambda}}} Y (H_{\alpha}) \circ
\Phi_{\lambda}, \]
and so we have
\[ \int_0^{\mathcal{T}} \partial_{\lambda}^2 h_{\alpha, \lambda} (X_t, t) d
W^{\alpha}_t - \int_0^{\mathcal{T}}  ((\partial_{\lambda} h_{\alpha, \lambda} (X_t,
t))^2 - h_{\alpha, \lambda} (X_t, t) \partial_{\lambda}^2 h_{\alpha,
	\lambda} (X_t, t)) d t = \]
\[ = \int_0^{\mathcal{T}} \frac{1}{\eta_{\lambda}} \left( Y (H_{\alpha})
(X^{\lambda}_{t_{\lambda}}) - \frac{\tau (t_\lambda)}{2} H_{\alpha}
(X^{\lambda}_{t_{\lambda}}) \right) d W^{\alpha,
	\lambda}_{t_{\lambda}} - \int_0^{\mathcal{T}} \frac{1}{\eta^2_\lambda}\left(  H_{\alpha}
(X^{\lambda}_{t_{\lambda}})\right)^2 d t_{\lambda}. \]
Applying our time change we get that the previous expression is equal to
\[ \int_0^{f_{-\lambda} (\mathcal{T})} \frac{1}{\eta_{\lambda}} \left( Y (H_{\alpha})
(X^{\lambda}_{t_{\lambda}}) - \frac{\tau (f_{\lambda} (t))}{2} H_{\alpha}
(X^{\lambda}_{t_{\lambda}}) \right) d W^{\alpha,
	\lambda}_{t} - \int_0^{f_{-} (\mathcal{T})}\frac{1}{\eta^2_\lambda}\left(  H_{\alpha}
(X^{\lambda}_{t_{\lambda}})\right)^2 d t. \]
The expectation with respect to $\mathbb{P}$ of the third term is equal to
\[
\mathbb{E}_{\mathbb{Q}_{\lambda}}  \left[
\left( \int_0^{f_{-\lambda (t)}} L(F) \circ
\Phi_{\lambda} (X_s, s)f^{\prime}_\lambda(s) d s \right)\left( \int_0^{f_{-\lambda} (\mathcal{T})} \frac{1}{\eta_{\lambda}} \left( Y (H_{\alpha})
(X^{\lambda}_{t_{\lambda}}) - \frac{\tau (f_{\lambda} (t))}{2} H_{\alpha}
(X^{\lambda}_{t_{\lambda}}) \right) d W^{\alpha,
	\lambda}_{t}\right)\right]
\]
\[ - \mathbb{E}_{\mathbb{Q}_{\lambda}}  \left[
\left( \int_0^{f_{-\lambda (t)}} L(F) \circ
\Phi_{\lambda} (X_s, s)f^{\prime}_\lambda(s) d s \right)\left(\int_0^{f_{-} (\mathcal{T})}\frac{1}{\eta^2_\lambda}\left(  H_{\alpha}
(X^{\lambda}_{t_{\lambda}})\right)^2 d t\right) \right].
\]

Since $(Y, \tau, H)$ is a
symmetry of $(\mu, \sigma)$ then the last expectation is equal to
\[
\mathbb{E}_{\mathbb{P}}  \left[
\left( \int_0^{t} L(F)(X_s, s) d s \right)\left( \int_0^{\mathcal{T}} \left( Y (H_{\alpha})
(X_{t}) - \frac{1}{2} H_{\alpha}
(X_{t}) \right) d W^{\alpha}_{t} - \int_0^{ \mathcal{T}}\left(  H_{\alpha}
(X_{t})\right)^2 d t.\right) \right]
\]
By using Holder inequality and It\^o's isometry we can finish the proof.
All the other terms can be handled in a similar way.
\end{proof}

\section{Examples}

\subsection{Brownian motion}

Let us consider as first example the one-dimensional Brownian motion
\[
\begin{pmatrix}
dX_t \\
dZ_t
\end{pmatrix}
=
\begin{pmatrix}
d{W_t} \\
dt
\end{pmatrix}
\]
If we look for quasi Doob symmetries of the form $V=(Y,\tau, H)$, with $Y= f(x,z)\partial_x+m(x,z)\partial_z$, $\tau=\tau(x,z)$ and $H=H(x,z)$, the determining equations are
\begin{align}
L(f)+H&=0 \notag \\
L(m)-\tau &=0 \notag \\
f_x-\frac 12 \tau &=0 \notag \\
m_x &=0 \notag
\end{align}
where $L= \frac 12 \partial_{xx}+\partial_z$ is the generator associated with the SDE. From the last equation we have that $m=\beta(z)$, and from the second one we get $\tau=\beta'(z)$. Therefore, the third equation gives $f=\frac 12 \beta'(z) x$ and from the first one we can compute $H=-L(f)=-\frac 12 x \beta''(z)$ and
we have the following  family of quasi Doob symmetries for the Brownian motion
\[
V=\Biggr(
\begin{pmatrix}
\frac{1}{2}\beta'(z)x \\
\beta(z)
\end{pmatrix}
,\beta'(z),-\frac{1}{2}\beta''(z)x\Biggl),
\]
where $\beta(z)$ is an arbitrary deterministic function of time.
In order to find the one-parameter group $ \Phi_\lambda (x,z)$ corresponding to the vector field $Y$ we solve the following system of differential equations
\begin{eqnarray*}
	\frac {dx}{d\lambda}&=&\frac 12 \beta'(z) x\\
	\frac {dz}{d\lambda}&=&\beta(z) \\
\end{eqnarray*}
and we find $z_{\lambda}= M(\lambda, z_0)$  and $x_\lambda =x_0 \exp (\frac 12 \int \beta'(z_{\lambda}) d\lambda )$.
Moreover, since the equation for the one-parameter group associated with $\eta$ is
\[
\frac {d\eta}{d\lambda}=\eta ( \tau\circ \Phi_{\lambda})=\eta \beta'(z_\lambda)
\]
we get $\eta_{\lambda}=\eta_0\exp (\int \beta'(z_{\lambda}) d\lambda )$. Finally, we have to solve the equation for $h_{\lambda}$, i. e.
\[
\frac {dh}{d\lambda}=\frac {1}{\sqrt{\eta_\lambda}}(H\circ\Phi_{\lambda})
=-\frac {x_0}{2\sqrt{\eta_0}}\beta''(z_{\lambda})
\]
and, choosing $\eta_0=1$ we get
\[
h_{\lambda}= -\frac {x_0}{2}\int\beta''(z_{\lambda}) d\lambda .
\]
Considering $(x,z)=(x_0,z_0)$ as the starting point and exploiting the expression for $h_{\lambda}$, we can find  $\mathfrak{h}_\lambda$: in fact, Proposition \ref{PropDoobTransf} ensures that
\[
\mathfrak{h}_{\lambda}= -\frac {x^2}{4}\int \beta''(z_{\lambda}) d\lambda.
\]
Finally, we can explicitly compute the expression of the function  $G_{\mathfrak{h}_{\lambda}}=\frac 12 h_{\lambda}^2+L(\mathfrak{h}_{\lambda})$  appearing in  Proposition \ref{PropDoobTransf}, i.e.
\[
G_{\mathfrak{h}_{\lambda}}=\frac 14 x^2 \left [ \int \beta''(z_{\lambda}) d\lambda\right]^2-\frac 14 x^2 \int  \beta'''(z_{\lambda})d \lambda -\frac 14\int \beta''(z_{\lambda}) d\lambda
\]
In order to check that conditions of Hypothesis A are satisfied we explicitly compute the following expressions
\[ H_{\alpha}=-\frac 12 \beta''(z) x \]
\[Y( H_{\alpha})=\frac 12x\left[ -\frac 12 \beta'(z)\beta''(z) -\beta(z)\beta'''(z)\right] \]
\[ L(Y^i)= \begin{pmatrix}
\frac 12 \beta''(z) x \\
\beta'(z)
\end{pmatrix} \]
\[ \Sigma_{\alpha}(Y^i)= \begin{pmatrix}
\frac 12 \beta'(z)  \\
0
\end{pmatrix} \]
\[ L(Y(Y^i))= \begin{pmatrix}
\frac 12 x\left[\frac 12 (\beta'(z))^2 +\beta(z)\beta''(z)\right] \\
(\beta'(z))^2+\beta(z)\beta''(z)
\end{pmatrix} \]
\[ \Sigma_{\alpha}(Y(Y^i))= \begin{pmatrix}
\frac 14 (\beta'(z))^2 +\frac 12\beta(z)\beta''(z) \\
0
\end{pmatrix} \]
Since the previous expressions are continuous in $z$ and at most with linear growth in $x$ we get that $ H_\alpha, Y, L(Y), \Sigma_{\alpha}, L(Y(Y^i)), \Sigma_{\alpha}(Y(Y^i)) $ are in $L^2(\Omega) $ since are polynomials of Gaussian r.v.

Applying Theorem \eqref{integrationbyparts1} we get our integration by parts formula (with $m=\beta(z)$)
\[
\mathbb{E}_{\mathbb{P}}\left[F^\prime(X_t)\frac{1}{2} \beta'(Z) X_t\right]=
\mathbb{E}_{\mathbb{P}}\left[F(X_t)\left(\int_{0}^{\mathcal{T}}(\frac{1}{2}\beta''(s) X_sdW_s\right)\right] + \beta(t)\mathbb{E}_{\mathbb{P}}\left[\frac{\partial_{xx}}{2}F(X_t)\right].
\]
Since we started by a quasi Doob symmetry, by writing the Doleans-Dade exponential according to Definition \ref{quasidoob} and considering Corollary \ref{corollary_integration} with $ k(x,z)=- \frac{x^2}{4}\beta''(z)   $ the previous integration by parts formula  admits also the following simplyfied form
\[
\mathbb{E}_{\mathbb{P}}\left[F^\prime(X_t)\frac{1}{2} \beta'(Z) X_t\right]=
\]
\[
\mathbb{E}_{\mathbb{P}}\left[F(X_t)\left(\frac{X^2_\mathcal{T}}{4}\beta''(Z)-\frac{X^2_0}{4}\beta''(Z)-\int_{0}^{\mathcal{T}}\left(\frac{X^2_s}{4}\beta'''(Z)+\frac 14\beta''(Z)\right)ds\right)\right] + \beta(t)\mathbb{E}_{\mathbb{P}}\left[\frac{\partial_{xx}F(X_t)}{2}\right].
\]

A standard application of It\^o formula to the function $g(X,Z)=\frac{X^2}{4}\beta''(Z)$ allows us to verify that the two  integration by parts formulas are consistent.

\subsection{Generalized Ornstein Uhlenbeck process }

We consider the generalized OU process
\[
\begin{pmatrix}
dX_t \\
dZ_t
\end{pmatrix}
=
\begin{pmatrix}
-v'(X_t) \\
1
\end{pmatrix}
dt+
\begin{pmatrix}
1 \\
0
\end{pmatrix}
dW_t,
\]
where $W_t$ is a one dimensional Brownian motion and $v'(x)$ is the derivative of a function $v(x) $, which is a polynomial with an even degree with positive leading coefficients.
If we look for quasi Doob symmetries of the form $V=(Y,\tau, H)$, with $Y= f(x,z)\partial_x+m(x,z)\partial_z$, $\tau=\tau(x,z)$ and $H=H(x,z)$, the determining equations are
\begin{align}
fv''(x)+L(f)+H+\tau v'(x)&=0 \notag \\
L(m)-\tau &=0 \notag \\
f_x-\frac 12 \tau &=0 \notag \\
m_x &=0 \notag
\end{align}
where $L= \frac 12 \partial_{xx}-v'(x)\partial_x +\partial_z$ is the generator associated with the SDE.
In particular, if we look for quasi Doob symmetries with $\tau=0$ we get $m(x,z)=const$ and $f(x,z)=\beta(z)$. Moreover, from the first equation we find $H(x,z)= -\beta(z) v''(x)-\beta'(z)$ and, choosing $m=0$
we have an infinite-dimensional family of infinitesimal symmetries of the form
\[
V=\Biggr(
\begin{pmatrix}
\beta(z) \\
0
\end{pmatrix}
,0,-\beta(z)v''(x)-\beta'(z)\Biggl)
\]
where $\beta(z)$ is an arbitrary deterministic function of z.

In order to find the one-parameter group $ \Phi_\lambda (x,z)$ corresponding to the vector field $Y$ we solve the following system of differential equations
\begin{eqnarray*}
	\frac {dx}{d\lambda}&=&\beta(z) \\
	\frac {dz}{d\lambda}&=&0 \\
\end{eqnarray*}
and we find $ \Phi_\lambda (x,z)=(\beta(z)\lambda+x, z)$.
Moreover, since the equation for the one-parameter group associated with $\eta$ is
\[
\frac {d\eta}{d\lambda}=\eta ( \tau\circ \Phi_{\lambda})=0
\]
we get $\eta_{\lambda}=\eta_0=1$. Therefore, solving the equation for $h_{\lambda}$, i.e.
\[
\frac {dh}{d\lambda}=\frac {1}{\sqrt{\eta_\lambda}}(H\circ\Phi_{\lambda})
=\left[ -\beta(z)v''(x+\lambda\beta(z))-\beta'(z)\right]
\]
we find
\[
h_{\lambda}= -v'(x+\lambda\beta(z))+v'(x)-\lambda\beta'(z).
\]
The following step consists in writing the explicit expression for $\mathfrak{h}$. Using Proposition \ref{PropDoobTransf} we have
\[
h_{j \lambda}(x)=\sigma_j^i(x)\partial_{x^i}(\mathfrak{h}_{\lambda})(x)
\]
that means
\[
\mathfrak{h}_{\lambda}=\int_{0}^{x}\left( -v'(s+\lambda\beta(z))+v'(s)-\lambda\beta'(z_0)\right) ds= -v(x+\lambda\beta(z))+ v(\lambda\beta(z))+v(x)-v(0)-x\lambda\beta'(z).
\]
Now, we can explicitly compute the expression of the function  $G_{\mathfrak{h}_{\lambda}}=\frac 12 h_{\lambda}^2+L(\mathfrak{h}_{\lambda})$  appearing in  Proposition \ref{PropDoobTransf}, i.e.
\[
G_{\mathfrak{h}_{\lambda}}=-\frac 12 v''(x+\lambda\beta(z))+\frac 12 v''(x)-\frac 12 v'(x)^2+\lambda \beta'(z) v'(\lambda\beta(z))-\lambda x\beta''(z)+\frac 12[v'(x+\lambda\beta(z))]^2 +\frac 12 \lambda^2\beta'(z)^2.
\]

In order to check that conditions of Hypothesis A are satisfied we explicitly compute
\[ H_{\alpha}=-\beta(z) v''(x) -\beta'(z) \]
\[Y( H_{\alpha})=-(\beta(z))^2v'''(x)  \]
\[ L(Y^i)= \begin{pmatrix}
 \beta'(z)  \\
0
\end{pmatrix} \]
\[ \Sigma_{\alpha}(Y^i)= \begin{pmatrix}
0 \\
0
\end{pmatrix} \]
\[ L(Y(Y^i))= \begin{pmatrix}
0 \\
0
\end{pmatrix} \]
\[ \Sigma_{\alpha}(Y(Y^i))= \begin{pmatrix}
0 \\
0
\end{pmatrix}. \]
Taking $\varphi (x) = \exp (K v (x))$ we have that
\[ \partial_t(\varphi)+ L (\varphi) = - K  (v' (x))^2 \varphi (t, x) +
\frac{1}{2} K v'' (x) \varphi (t, x) + \frac{1}{2} (K)^2  (v' (x))^2
\varphi (t, x). \]
Since $v$ by hypothesis is a polynomial with an even degree with positive leading coefficients, then there exist $K,M>0$ such that the following inequality holds
\[ \left( - K + \frac{1}{2} (K)^2 \right) (v' (x))^2 + \frac{1}{2} K v''
(x) \leq M . \]
Furthermore $\varphi = \exp (K v (x))$ is a Lyapunov function for OU process. Indeed, since $v'' (x)  $ and $v''' (x) $ are polynomials while $\varphi(x) $ grows more than exponentially, then
\[ | v'' (x) |^p, | v''' (x) |^p \leq c_1\varphi(x)+c_2. \]

Finally, we can apply Theorem \ref{integrationbyparts1}, obtaining the following explicit integration by parts formula
\[
\mathbb{E}_{\mathbb{P}}[\beta (t) F^\prime(X_t)]=
\mathbb{E}_{\mathbb{P}}[F(X_t)(\int_{0}^{\mathcal{T}}(\beta (s) v^{''} (X_s)  + \beta^\prime (s))dW_s)],
\]
where we used the assumption that $m(z)=0$.\\

Applying Corollary \ref{corollary_integration} with
\[
k(x,z)=\partial_{\lambda}\mathfrak{h}_{\lambda}|_{\lambda=0}=-v'(x)\beta(z)+v'(0)\beta(z)-x\beta'(z)
\]
we have
\[
\mathbb{E}_{\mathbb{P}}[\beta (t) F^\prime(X_t)]=
\mathbb{E}_{\mathbb{P}}\left[F(X_t)(v^\prime(X_{t})\beta(t)+X_{t}\beta^\prime(t)-v'(0)\beta(t)-(v^\prime(X_{0})\beta(0)+X_{0}\beta^\prime(0)-v'(0)\beta'(0))\phantom{-\int_{0}^{\mathcal{T}}} \right.
\]
\[
\left.-\int_{0}^{\mathcal{T}}(\frac 12 \beta (s) v^{'''} (X_s)  + X_s\beta^{''}(s)- v^\prime (0)\beta^\prime (s)- v^{'} (X_s)  v^{''} (X_s) \beta(s))ds\right]
\]
and by applying It\^o formula to the function $g(x,z)=v^\prime (x)\beta(z)+x\beta^\prime (z)-v'(0)\beta(z) $ it is possible to verify that the two formulas are consistent.

\subsection{Bessel process}

We consider the one-dimensional Bessel process
\[
\begin{pmatrix}
dX_t \\
dZ_t
\end{pmatrix}
=
\begin{pmatrix}
\frac {a}{X_t} \\
1
\end{pmatrix}
dt+
\begin{pmatrix}
1 \\
0
\end{pmatrix}
dW_t,
\]
where $W_t$ is a one dimensional Brownian motion and $a\geq \frac{5}{2}$ is a real constant.

The determining equations for quasi Doob symmetries of the form $V=(Y,\tau, H)$, with $Y= f(x,z)\partial_x+m(x,z)\partial_z$, $\tau=\tau(x,z)$ and $H=H(x,z)$ are
\begin{align}
\frac {a}{x^2}f+L(f)+H-\tau \frac ax &=0 \notag \\
L(m)-\tau &=0 \notag \\
f_x-\frac 12 \tau &=0 \notag \\
m_x &=0 \notag
\end{align}
where $L= \frac 12 \partial_{xx}+\frac ax \partial_x +\partial_z$ is the generator associated with the SDE.\\

It is easy to check that we have an infinite dimensional family of quasi Doob symmetries of the form
\[
V=\Biggr(
\begin{pmatrix}
\frac{1}{2}\beta'(z)x \\
\beta(z)
\end{pmatrix}
,\beta'(z),-\frac{1}{2}\beta''(z)x\Biggl)
\]
where $\beta(z)$ is an arbitrary deterministic function of time.

The one-parameter group $ \Phi_\lambda (x,z)$ corresponding to the vector field $Y$ has been already computed in Example 1 (Brownian Motion) as well as
\begin{align*}
\eta_{\lambda}	&=\eta_0\exp [\int \beta'(z_{\lambda}) d\lambda ]\notag \\
h_{\lambda} &=-\frac {x_0}{2}\int\beta''(z_{\lambda}) d\lambda \notag \\
\mathfrak{h}_{\lambda} &=-\frac {x^2}{4}\int \beta''(z_{\lambda}) d\lambda.\notag
\end{align*}
Therefore, we have only to compute the explicit expression of the function  $G_{\mathfrak{h}_{\lambda}}=\frac 12 h_{\lambda}^2+L(\mathfrak{h}_{\lambda})$  appearing in  Proposition \ref{PropDoobTransf}, i.e.
\[
G_{\mathfrak{h}_{\lambda}}=\frac 14 x^2 \left [ \frac 12\left(\int \beta''(z_{\lambda})\right)^2- \int \beta'''(z_{\lambda}) d\lambda \right]-\frac{1+2a}{4}  \int  \beta''(z_{\lambda})d \lambda.
\]

In order to check that conditions of Hypothesis A are satisfied we explicitly compute
\[ H_{\alpha}=-\frac 12 \beta''(z) x \]
\[Y( H_{\alpha})=\frac 12x\left[ -\frac 12 \beta'(z)\beta''(z) -\beta(z)\beta'''(z)\right] \]
\[ L(Y^i)= \begin{pmatrix}
\frac 12\left[ \frac ax\beta'(z)+ \beta''(z) x\right] \\
\beta'(z)
\end{pmatrix} \]
\[ \Sigma_{\alpha}(Y^i)= \begin{pmatrix}
\frac 12 \beta'(z)  \\
0
\end{pmatrix} \]
\[ L(Y(Y^i))= \begin{pmatrix}
\frac {a}{2x} \left[\frac 12 (\beta'(z))^2 +\beta(z)\beta''(z)\right]+\frac 12 x [2\beta'(z)\beta''(z)+\beta(z)\beta'''(z)] \\
(\beta'(z))^2+\beta(z)\beta''(z)
\end{pmatrix} \]
\[ \Sigma_{\alpha}(Y(Y^i))= \begin{pmatrix}
-\frac {a}{2x^2}\left[ \frac 12(\beta'(z))^2 +\beta(z)\beta''(z)\right]+\frac 12[2\beta'(z)\beta''(z)+\beta(z)\beta'''(z)] \\
0
\end{pmatrix} \]
If we take
\[ \varphi_1 (x, t) = \exp (g (t) x^2) \]
 we have
\[\partial_t(\varphi_1)+ L (\varphi_1) = g' (t) x^2 \varphi_1 + 2 a g (t)
\varphi_1 + \frac{1}{2} (2 g (t) \varphi_1 + 4 g (t)^2 x^2 \varphi_1). \]
To make the previous quantity less than $M\varphi_1 $ we require that
\[ g' (t) + 2 g (t)^2 \leq 0. \]
Let us analyze the behaviour in the neighborhood of zero by considering
\[ \varphi_2 (t, x) = x^{- \alpha} \quad \alpha > 0 .\] Since
\[ \partial_t(\varphi_2)+ L (\varphi_2) =  \alpha \left[-a  +
\frac{1}{2} (\alpha + 1)\right] x^{- \alpha - 2} \]
we have to ask
\[ \left( - a + \frac{1}{2} (\alpha + 1) \right) \leq 0 \Rightarrow
\alpha \leq 2 a - 1. \]
Requiring $\alpha > 4 $ we get
\[   a > \frac{5}{2} .\]
Since, for suitable constants $c_2, c_2$ and $c_3$, we have
\[
|H_\alpha|^2, |Y(H_\alpha) |^2, |L(Y^i) |^2, |\Sigma_{\alpha}(Y^i) |^2,|L(Y(Y^i)) |^2,|\Sigma_{\alpha}(Y(Y^i)) |^2\leq c_1\varphi_1+c_2\varphi_2 + c_3,
\]
the Hypothesis \ref{Hp:A} holds.
Applying Theorem \eqref{integrationbyparts1} we get
\[
\mathbb{E}_{\mathbb{{P}}}\left[\frac 12 \beta^\prime (t)X_t\partial_{x} F(X_t)\right]=m(t)\mathbb{E}_{\mathbb{{P}}}\left[\frac 12 \partial_{xx}F(X_t)+\frac{a}{X_t}\partial_{x}F(X_t)\right]+
\mathbb{E}_{\mathbb{{P}}}\left[F(X_t)\left(\int_{0}^{T}\frac 12 \beta^{''}(s)X_s  dW_s\right)\right].
\]
Since we have a quasi Doob symmetry, by Corollary  \ref{corollary_integration} with $k(x,z)=-\frac{x^2}{4}\beta''(z) $ we get
\[
\mathbb{E}_{\mathbb{{P}}}\left[\frac 12 \beta^\prime (t)X_t\partial_{x} F(X_t)\right]=m(t)\mathbb{E}_{\mathbb{{P}}}\left[\frac 12 \partial_{xx}F(X_t)+\frac{a}{X_t}\partial_{x}F(X_t)\right]
\]
\[
+\mathbb{E}_{\mathbb{{P}}}\left[F(X_t)
\left(\frac{X^2_t}{4}\beta''(t)-\frac{X^2_0}{4}\beta''(0)+\int_{0}^{T}(\frac 14 \beta^{''}(s)+\frac a2 \beta^{''}(s)+\frac{X^2_s}{4}\beta'''(s))ds \right)\right]
\]
and the  consistency of the two formulas can be established applying It\^o formula to the function $g(x,z)=\frac{x^2}{4}\beta''(z) $.

\subsection{Stochastic volatility models}

We consider the following family of volatility models, reprensenting a generalization of the well-known Heston model
\[
\begin{pmatrix}
d\nu_t\\
dS_t \\
dZ_t
\end{pmatrix}
=
\begin{pmatrix}
a\nu_t+b\\
\mu_0S_t \\
1
\end{pmatrix}
dt+
\begin{pmatrix}
\sigma_0\nu_t^{\alpha_1} & 0\\
0 & \nu_t^{\alpha_2} S_t\\
0&0
\end{pmatrix}\cdot
\begin{pmatrix}
dW^1_t\\
dW^2_t \\
\end{pmatrix}
\]
where $W_t=(W^1_t,W^2_t)$ is a two-dimensional Brownian motion and $a,b,\mu_0, \sigma_0$ are constants whose range of values will be discussed below.
For calculations it is convenient to introduce the transformation $X_t=\log(S_t)$, obtaining for the second equation
\[
dX_t=\left(\mu_0-\frac 12 \nu^{2\alpha_2}\right)dt + \nu_t^{\alpha_2}dW^2_t.
\]
The generator corresponding to the (complete) SDE is
\begin{equation}\label{generalized_Heston_generator}
L=\frac 12\sigma_0^2\nu^{2\alpha_1}\partial_{\nu\nu}+\frac 12 \nu^{2\alpha_2}\partial_{xx}+(a\nu+b)\partial_{\nu}+\left(\mu_0-\frac 12 \nu^{2\alpha_2}\right)\partial_x +\partial_z
\end{equation}
and the determining equation for Girsanov infinitesimal symmetries are
\begin{align}
af-L(f)-\sigma_0\nu^{\alpha_1}H_1+\tau (a\nu+b)&=0 \notag \\
-\alpha_2f\nu^{2\alpha_2-1}-L(g)-\nu^{\alpha_2}H_2+\tau(\mu_0-\frac 12 \nu^{2\alpha_2}) &=0 \notag \\
-L(m)+\tau &=0 \notag \\
\alpha_1\sigma_0 f \nu^{\alpha_1-1}-f_{\nu}\sigma_0\nu^{\alpha_1}+\frac 12 \tau\sigma_0\nu^{\alpha
	_1} &=0 \notag \\
-f_x\nu^{\alpha_2} &=0 \notag \\
-g_{\nu}\sigma_0\nu^{\alpha_1} &=0 \notag \\
\alpha_2 f \nu^{\alpha_2-1}-g_x\nu^{\alpha_2}+\frac 12 \tau \nu^{\alpha_2}  &=0 \notag \\
-m_{\nu}\sigma_0\nu^{\alpha_1}&=0 \notag \\
-m_x\nu^{\alpha_2}&=0
\end{align}
Solving these equations we find the following infinite dimensional family of Girsanov symmetries:
\[
V=\left(
\begin{pmatrix}
\frac{\beta'(z)}{2(1-\alpha_1)}\nu \\
\frac{(\alpha_2+1-\alpha_1)\beta'(z)}{2(1-\alpha_1)}x \\
\beta(z)
\end{pmatrix}
,\beta'(z),
\begin{pmatrix}H_1\\H_2\\ \end{pmatrix}
\right)
\]
where $\beta(z)$ is an arbitrary  function of time and
\[
H_1=\frac {1}{\sigma_0}\left[ \nu^{-\alpha_1}\left( \frac {b\beta'(1-2\alpha_1)}{2(1-\alpha_1)}\right)+\nu^{1-\alpha_1}\left(a\beta'-\frac {\beta''}{2(1-\alpha_1)}\right)\right]
\]
\[
H_2=\nu^{\alpha_2}\left[ \frac {\beta'(-\alpha_2+\alpha_1-1)}{4(1-\alpha_1)}\right] + \nu^{-\alpha_2}\left[ \frac{\mu_0\beta'(1-\alpha_1-\alpha_2)-x\beta''(\alpha_2+1-\alpha_1)}{2(1-\alpha_1)}\right].
\]
In order to find the one-parameter group $ \Phi_\lambda (\nu,x,z)$ corresponding to the vector field $Y$ we have to solve the following system of differential equations
\begin{eqnarray*}
	\frac {d\nu}{d\lambda}&=&\frac{\beta'(z)}{2(1-\alpha_1)} \nu\\
	\frac {dx}{d\lambda}&=&\frac {\beta'(z)(\alpha_2+1-\alpha_1)}{2(1-\alpha_1)}x \\
	\frac {dz}{d\lambda}&=&\beta(z) \\
\end{eqnarray*}
and we find
\begin{eqnarray*}
	\nu_{\lambda}&=&\nu_0\exp\left[\frac {M}{2(1-\alpha_1)}\right]\\
	x_{\lambda}&=&x_0\exp \left[\frac {M(\alpha_2+1-\alpha_1)}{2(1-\alpha_1)}\right]\\
	z_{\lambda}&=& N(\lambda, z_0)\\
\end{eqnarray*}
where $M=\int \beta'(z_{\lambda}) d\lambda$.
Moreover, since the equation for the one-parameter group associated with $\eta$ is
\[
\frac {d\eta}{d\lambda}=\eta ( \tau\circ \Phi_{\lambda})=\eta \beta'(z_\lambda)
\]
we get $\eta_{\lambda}=\eta_0\exp [M])$ and we can use the previous expression in order to find $h_{\lambda}$, recalling that $ \frac {dh_i}{d\lambda}=\frac {1}{\sqrt{\eta_{\lambda}}}H_i(\nu_\lambda,x_\lambda,z_\lambda)$.
Indeed, with long but straightforward computations we get
\begin{eqnarray*}
	h_{1\lambda}&=&-\frac {b(1-2\alpha_1)}{\sigma_0}\nu_0^{-\alpha_1}\left( \exp \left[\frac {-M}{2(1-\alpha_1)}\right]-1\right)+\frac {\nu^{1-\alpha_1}}{\sigma_0}\left( aM -\frac {\int \beta'' (z_{\lambda})d\lambda}{2(1-\alpha_1)}\right)\\
	h_{2\lambda}&=&\frac {(-\alpha_2+\alpha_1-1)}{2(\alpha_1+\alpha_2-1)}\nu_0^{\alpha_2}\left( \exp \left[\frac {M(\alpha_1+\alpha_2-1)}{2(1-\alpha_1)}\right]-1\right)
	+ \mu_0\frac {(1-\alpha_1-\alpha_2)}{(\alpha_1-\alpha_2-1)}\nu_0^{-\alpha_2}\times\\
	& &\times \left( \exp \left[\frac {M(\alpha_1-\alpha_2-1)}{2(1-\alpha_1)}\right]-1\right)
	 - x_0\frac {(1-\alpha_1+\alpha_2)}{2(1-\alpha_1)}\nu_0^{-\alpha_2}\int \beta''(z_{\lambda})d\lambda.
\end{eqnarray*}

In order to check the integrability condition, we remark that in this case we have a Girsanov symmetry that is not of quasi Doob type. On the other hand, since the first equation is independent of the second one, if we consider only the first and the last components of the vector field $Y$ we get a quasi Doob symmetry in the coordinates $(\nu, z)$ and we can look for the functions  $\mathfrak{h}_{1\lambda} $ and $G_{\mathfrak{h}_{1\lambda}}$ as in the previous examples. After long but straightforward computations we get
\[
\mathfrak{h}_{1\lambda} =\frac {1}{\sigma_0}\left( -b\nu^{-2\alpha_1+1}\left[ \exp \left( \frac{-M}{2(1-\alpha_1)}\right)-1\right]+\frac {\nu^{2-2\alpha_1}}{2(1-\alpha_1)}\left[aM-\frac {\int \beta''(z_{\lambda})d\lambda}{2(1-\alpha_1)}\right]\right)
\]
and
\begin{eqnarray*}
	G_{\mathfrak{h}_{1\lambda}}&=&\nu^{-1}\left[ b\alpha_1(1-2\alpha_1)\Omega_1\right]+\frac {(1-2\alpha_1)b^2\nu^{-2\alpha_1}}{2\sigma_0^2}\left[ -2\Omega_1+(1-2\alpha_1)\Omega_1^2\right]+\\
	&+&\frac {b\nu^{1-2\alpha_1}}{\sigma_0^2}\left[ -a(1-2\alpha_1)\Omega_1+\Omega_2+\frac {(\Omega_1+1)M'}{2(1-\alpha_1)}-(1-2\alpha_1)\Omega_1\Omega_2\right]+\\
	&+&\frac {\nu^{2-2\alpha_1}}{\sigma_0^2}\left[ \frac {aM'}{2(1-\alpha_1)}+a\Omega_2-\frac {\int \beta'''(z_{\lambda})d\lambda}{4(1-\alpha_1)^2}+\frac {\Omega_2^2}{2}\right]+ \frac {1-2\alpha_1}{2}\Omega_2
\end{eqnarray*}
where
\begin{eqnarray*}
	\Omega_1&=&\exp \left( \frac {-M}{2(1-\alpha_1)}\right)-1\\
	\Omega_2&=&aM-\frac {\int \beta''(z_{\lambda})d\lambda}{2(1-\alpha_1)}.
\end{eqnarray*}
In order to check that conditions of Hypothesis A are satisfied we explicitly compute
\[ H_{1}=\frac {1}{\sigma_0}\left[\nu^{-\alpha_1}\left(\frac {b\beta'(z)(1-2\alpha_1)}{2(1-\alpha_1)}\right)+ \nu^{1-\alpha_1}\left(a\beta'(z)-\frac {\beta''(z)}{2(1-\alpha_1)}\right)\right] \]
\[ H_{2}=\nu^{\alpha_2}\left[\frac {\beta'(z)(\alpha_1-\alpha_2-1)}{4(1-\alpha_1)}\right]+ \nu^{-\alpha_2}\left[\frac {\mu_0\beta'(z)(1-\alpha_1-\alpha_2)-x\beta''(z)(\alpha_2+1-\alpha_1)}{2(1-\alpha_1)}\right] \]
\[ Y(H_1)=\nu^{-\alpha_1} K_1(z) + \nu^{1-\alpha_1} K_2(z)\]
\[ Y(H_2)=\nu^{\alpha_2} K_3(z) + \nu^{-\alpha_2} K_4(z) + x\nu^{-\alpha_2}K_5(z)  \]
\[ L(Y^1)= \nu K_6(z)  + K_7(z)  \]
\[ L(Y^2)= \nu^{2\alpha_2} K_8(z)  + x K_9(z)  + K_{10}(z) \]
\[ L(Y^3)=\beta'(z) \]
\[ \Sigma_1(Y^1)= \nu^{\alpha_1} K_{11}(z) \]
\[ \Sigma_1(Y^2)= \Sigma_1(Y^3)=0 \]
\[ \Sigma_2(Y^1)= \Sigma_2(Y^3)=0 \]
\[ \Sigma_2(Y^2)= \nu^{\alpha_2}K_{12}(z) \]
where $K_7, K_8,K_{10},K_{11}$ and $K_{12}$  are some  continuous functions depending on  $\beta(z),\beta'(z)$,   $K_1, K_2, K_3, K_4, K_6$ and $K_9$ are some continuous functions depending on $\beta(z),\beta'(z), \beta''(z)$, and finally $K_5$ is a continuous function depending on  $\beta(z),\beta'(z), \beta''(z), \beta'''(z).$
To find the integration by part formula for the stochastic volatility model under study in this section, we need to prove the following  technical result.

\begin{lemma}\label{generalized_Heston}
Suppose that
\[ \alpha_1 > \frac{1}{2}, \alpha_2\leq \frac{1}{2}, a<0, b>0.\]
Then
\[
\phi_1(x,\nu)=\exp(k\nu) ,\quad k << 1
\]
\[
\phi_2(x,\nu) =\exp(k\nu^{-k^\prime}), \quad k,k^\prime << 1
\]
\[
\phi_3(x,\nu)=x^2(\nu^{-1}+c)+c_1\exp(k\nu)+c,
\]
are Lyapunov functions for the original model equation.
If $\alpha_1 =\alpha_2=\frac{1}{2} $	and $b>\sigma^2,$ then
\[
\phi_1(x,\nu)=\nu^{-1}
\]
\[
\phi_2(x,\nu) =\exp(k\nu),\quad k << 1
\]
\[
\phi_3(x,\nu)=x^2(\nu^{-1}+c)+c_1\exp(k\nu)+c_2\nu^{-1} + c_3
\]
are Lyapunov functions for the original model equation.
\end{lemma}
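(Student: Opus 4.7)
The plan is to verify, for each of the six candidate functions $\phi$, the two clauses of the Lyapunov condition recalled before Theorem~\ref{Lyapunov_function_theorem}: non-negativity together with the properness requirement $\inf_{t\in[0,T]}\phi(t,\nu,x)\to\infty$ as $(\nu,x)$ leaves every compact subset of $(0,\infty)\times\mathbb{R}$, and the differential inequality $(\partial_t+L)\phi\le M\phi$ for some constant $M$. Since all candidates are time-independent and depend only on $(x,\nu)$, both $\partial_t\phi$ and $\partial_z\phi$ vanish, so the task reduces to computing $L\phi$ from the explicit generator \eqref{generalized_Heston_generator} and bounding $L\phi/\phi$ from above uniformly on $(0,\infty)\times\mathbb{R}$.

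I would first treat the pure-$\nu$ functions $\phi_1,\phi_2$, which are designed to control the two ends $\nu\to\infty$ and $\nu\to 0^+$ respectively. A single computation covers both at once: for $\phi=\exp(k\nu^\beta)$ with $\beta\in\mathbb{R}\setminus\{0\}$ one finds
\[
\frac{L\phi}{\phi}=\tfrac12\sigma_0^2 k\beta(\beta-1)\nu^{2\alpha_1+\beta-2}+\tfrac12\sigma_0^2 k^2\beta^2\nu^{2\alpha_1+2\beta-2}+k\beta a\nu^{\beta}+k\beta b\nu^{\beta-1},
\]
which I would specialize to $\beta=1$ for $\phi_1$ in case~1, to $\beta=-k'$ for $\phi_2$ in case~1, and to $\beta=1$ for $\phi_2$ in case~2, checking in each specialization which term of the right-hand side is dominant at each end of $(0,\infty)$. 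For $\beta<0$, boundedness as $\nu\to 0^+$ is driven by the strictly negative term $k\beta b\nu^{\beta-1}$, whose exponent is more singular than those of the two positive diffusive contributions exactly under the restriction $\alpha_1>\tfrac12$ with $|\beta|$ small; for $\beta>0$, boundedness as $\nu\to\infty$ is driven by the mean-reverting drift $k\beta a\nu^\beta<0$, which tames the positive quadratic-in-$k$ contribution via a sufficiently small choice of $k$. For $\phi_1=\nu^{-1}$ in case~2 a direct computation yields $L\phi_1/\phi_1=(\sigma_0^2-b)\nu^{-1}-a$, bounded above exactly under the Feller-type hypothesis $b\ge\sigma_0^2$ appearing in the statement. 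Properness is manifest from the explicit forms of $\phi_1$ and $\phi_2$.

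For $\phi_3$ I would decompose the function according to its summands and compute $L\phi_3$ piece by piece, the only genuinely coupled contribution being
\[
L(x^2\nu^{-1})=\sigma_0^2 x^2\nu^{2\alpha_1-3}+\nu^{2\alpha_2-1}-ax^2\nu^{-1}-bx^2\nu^{-2}+2\mu_0 x\nu^{-1}-x\nu^{2\alpha_2-1}.
\]
The positive summands of $L\phi_3$ are then absorbed into the corresponding components of $\phi_3$: the $x^2\nu^{-1}$ piece absorbs the $-ax^2\nu^{-1}$ and $\sigma_0^2 x^2\nu^{2\alpha_1-3}$ contributions via the dominance of $-bx^2\nu^{-2}$ near $\nu=0$; the exponential $c_1\exp(k\nu)$ piece absorbs the contributions that grow moderately in $\nu$ at infinity, thanks to $\alpha_2\le\tfrac12$; in case~2 the extra $c_2\nu^{-1}$ summand absorbs purely $\nu$-dependent residues near $\nu=0$; and the linear-in-$x$ cross-terms are handled by the Young inequality $2|x|\nu^{-1}\le x^2\nu^{-1}+\nu^{-1}$, with the additive constant ensuring that $\phi_3$ is bounded below away from zero. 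Choosing $c,c_1,c_2,c_3$ and $M$ sufficiently large then closes the estimate, and properness of $\phi_3$ is immediate since $\phi_3\to\infty$ whenever $|x|\to\infty$, $\nu\to\infty$ or $\nu\to 0^+$.

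The main technical obstacle is the delicate balance, near $\nu=0$, between the singular positive diffusive contribution $\sigma_0^2 x^2\nu^{2\alpha_1-3}$ and the negative drift contribution $-bx^2\nu^{-2}$ inside $L(x^2\nu^{-1})$: both terms carry the same factor $x^2$, and dominance of the latter is exactly the condition $2\alpha_1-3\ge -2$, i.e.\ $\alpha_1\ge\tfrac12$, with the borderline $\alpha_1=\tfrac12$ collapsing to the coefficient comparison $b\ge\sigma_0^2$. This single obstruction dictates both the split into cases~1 and~2 and the appearance of the Feller-type condition $b>\sigma_0^2$ in case~2. All other inequalities reduce to routine Young-type estimates together with polynomial-versus-exponential comparisons at infinity.
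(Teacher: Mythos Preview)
Your approach is essentially the same as the paper's: compute $L\phi$ explicitly for each candidate and bound it by $M\phi$ (or $M\phi+C$), using the sign of the drift to control the large-$\nu$ regime, the sign of $b$ (and for the borderline $\alpha_1=\tfrac12$ the Feller comparison $b>\sigma_0^2$) to control the small-$\nu$ regime, and Young-type splittings for the cross-terms in $\phi_3$. The paper in fact only writes out the $\alpha_1=\alpha_2=\tfrac12$ case and leaves the general one to the reader, so your unified treatment via $\exp(k\nu^\beta)$ and your identification of the critical balance $\sigma_0^2 x^2\nu^{2\alpha_1-3}$ versus $-bx^2\nu^{-2}$ actually goes a bit further than what is spelled out there.
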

\begin{proof}
	We give the proof only for the case $\alpha_1=\alpha_2=\frac{1}{2}.$ Let us consider  $\phi_1(\nu)=\nu^{-1}$. If $L$ denote the infinitesimal generator given in \eqref{generalized_Heston_generator} we get
	\[
	\partial_t(\phi_1)+L(\phi_1)=\sigma_0^2 \cdot \nu^{-2}-(a\nu+b)\nu^{-2}=(\sigma_0^2-b)\nu^{-2}-a\nu^{-1}<M_1
	\]
	for some constant $M_1$ since by hypothesis $b>\sigma_0^2. $
	Taking $\phi_2=\exp(k\nu) $ we have
	\[
	\partial_t(\phi_2)+L(\phi_2)=\frac{1}{2}\sigma_0^2 k^2\nu \exp(k\nu)+(a\nu+b)k\exp(k\nu) =k[(\frac{1}{2}\sigma_0^2k+a)\nu+b]\exp(k\nu)<M_2
	\]
	for some constant $M_2 $ whenever $k<\frac{-2a}{\sigma_0^2} $, being $a<0$. Finally with $\phi_3(\nu)=x^2(\nu^{-1})+cx^2 + \psi(\nu)$, where $\psi(\nu)=c_1\exp(k\nu)+c_2\nu^{-1}+c_3,$ we obtain
	\[
	\partial_t(\phi_3)+L(\phi_3)=x^2L(\phi_1)+L(\psi(\nu))+1+\nu c+2(\mu_0-\frac{1}{2}\nu)x \nu^{-1}+2(\mu_0-\frac{1}{2}\nu)xc.
	\]
By the first part of the proof we have that $L(\phi_1)<M_1$ and $L(\psi(\nu))<M_3$ (for some $M_3>0$). This implies that
\[ x^2L(\phi_1)+L(\psi(\nu))<M_1 x^2+M_3< d_1 \phi_3 +e_1\]
for some $d_1,e_1>0$. Furthermore
\[ c \nu< \frac{c}{2}+\frac{\nu^2}{2} < d_2 \psi(\nu) + e_2< d_2 \phi_3 + e_2 .\]
In a similar way we get
\[2\left(\mu_0-\frac{1}{2}\nu\right)x \nu^{-1} = \mu_0 x^2+\frac{\mu_0}{\nu^2} +\frac{1}{2} +\frac{x^2}{2} < d_3 \phi_3 +e_3, \]
and
\[2\left(\mu_0-\frac{1}{2}\nu\right)xc <c\mu_0+c\mu_0 x^2+ \frac{c\nu^2}{2}+\frac{x^2}{2}< d_4 \phi_3 +e_4.\]
This means that there are some constants $c_4,c_5>0$ for which
	\[
	\partial_t(\phi_3)+L(\phi_3)\leq c_4\phi_3 + c_5,
	\]
	and, hence, we get the thesis.
\end{proof}

Finally, Theorem \eqref{integrationbyparts1} permits us to obtain the following integration by parts formula for our two-dimensional model:

\[
\mathbb{E}_{\mathbb{{P}}}\left[\frac{\beta'(z)}{2(1-\alpha_1)}\nu_t \partial_{\nu}F(\nu_t,X_t)+\frac{(\alpha_2+1-\alpha_1)\beta'(z)}{2(1-\alpha_1)}X_t \partial_{x} F(\nu_t,X_t)\right]=
\]
\[
m(t)\mathbb{E}_{\mathbb{{P}}}\left[\frac 12\sigma_0^2\nu^{2\alpha_1}\partial_{\nu\nu}F(\nu_t,X_t)+\frac 12 \nu^{2\alpha_2}\partial_{xx}F(\nu_t,X_t)+(a\nu+b)\partial_{\nu}F(\nu_t,X_t)+(\mu_0-\frac 12 v^{2\alpha_2})\partial_x F(\nu_t,X_t)\right]
\]
\[
-\mathbb{E}_{\mathbb{{P}}}\left[\left(\int_{0}^{\mathcal{T}}\frac {1}{\sigma_0}[ \nu^{-\alpha_1}\left( \frac {b\beta'(1-2\alpha_1)}{2(1-\alpha_1)}\right)+\nu^{1-\alpha_1}\left(a\beta'-\frac {\beta''}{2(1-\alpha_1)}\right)dW^1_s\right) F(X_t)\right]
\]
\[
-\mathbb{E}_{\mathbb{{P}}}\left[\left(\int_{0}^{\mathcal{T}}\nu^{\alpha_2}\left[ \frac {\beta'(-\alpha_2+\alpha_1-1)}{4(1-\alpha_1)}\right] + \nu^{-\alpha_2}[ \frac{\mu_0\beta'(1-\alpha_1-\alpha_2)-x\beta''(\alpha_2+1-\alpha_1)}{2(1-\alpha_1)}dW^2_s\right) F(X_t)\right]
\]
Applying Corollary \ref{corollary_integration} we obtain
\[
\mathbb{E}_{\mathbb{{P}}}\left[\frac{\beta'(z)}{2(1-\alpha_1)}\nu_t \partial_{\nu}F(\nu_t,X_t)+\frac{(\alpha_2+1-\alpha_1)\beta'(z)}{2(1-\alpha_1)}X_t \partial_{x} F(\nu_t,X_t)\right]=
\]
\[
m(t)\mathbb{E}_{\mathbb{{P}}}\left[\frac 12\sigma_0^2\nu^{2\alpha_1}\partial_{\nu\nu}F(\nu_t,X_t)+\frac 12 \nu^{2\alpha_2}\partial_{xx}F(\nu_t,X_t)+(a\nu+b)\partial_{\nu}F(\nu_t,X_t)+(\mu_0-\frac 12 v^{2\alpha_2})\partial_x F(\nu_t,X_t)\right]
\]
\[
-\mathbb{E}_{\mathbb{{P}}}\left[\left(\int_{0}^{\mathcal{T}}\frac {1}{\sigma_0}[ \nu^{-\alpha_1}\left( \frac {b\beta'(1-2\alpha_1)}{2(1-\alpha_1)}\right)+\nu^{1-\alpha_1}\left(a\beta'-\frac {\beta''}{2(1-\alpha_1)}\right)dW^1_s\right) F(X_t)\right]
\]
\[
-\mathbb{E}_{\mathbb{{P}}}\left[\left(\int_{0}^{\mathcal{T}}\nu^{\alpha_2}\left[ \frac {\beta'(-\alpha_2+\alpha_1-1)}{4(1-\alpha_1)}\right] + \nu^{-\alpha_2}[ \frac{\mu_0\beta'(1-\alpha_1-\alpha_2)-x\beta''(\alpha_2+1-\alpha_1)}{2(1-\alpha_1)}dW^2_s\right) F(X_t)\right].
\]

\section{Appendix A}

In this section we provide an integration by parts formula using a Lie's simmetries approach in a finite dimensional setting by taking smooth cylindrical functionals of the diffusion process.

Let us start by some useful definitions and by expliciting our main assumptions.

\begin{definition}\label{cylindrical_function}
	Let us introduce $\mathcal{S}=(t_1,t_2,...,t_k)$ with $t_1,t_2,...,t_k $ some fixed times such that $t_1>t_2>...>t_k$. A function $F^{\mathcal{S}}: M \rightarrow \Real$ is called a smooth cylindrical function if there exists a $\mathcal{C}^\infty $ function $f: M^k \rightarrow \Real  $ such that
	\[
	F^{\mathcal{S}}(X)=f(X_{t_1},X_{t_2},...,X_{t_k}).
	\]
\end{definition}

\begin{lemma} Let $F^{\mathcal{S}}(X) $ be a smooth cylindrical function according to Definition \ref{cylinder_function}.
	Introducing the process
	\[
	\tilde{F}_t:=\tilde{F}^{\mathcal{S}}(X_t):=f(X_{t_1\wedge t},X_{t_2\wedge t},...,X_{t_k\wedge t}),
	\]
	we have
	\begin{equation}
	\tilde{F}_t-\tilde{F}_0=\int_{0}^{t}\sum_{i=1}^{n}\partial^{s,\mathcal{S}}_i \tilde{F}\cdot \sum_{\alpha=1}^m\sigma_\alpha^idW^\alpha_s+\int_{0}^{t} L^{s,\mathcal{S}}(\tilde{F})ds
	\end{equation}
	where
	\[
	\partial^{s,\mathcal{S}}_i \tilde{F}=\sum_{j \le N_s^{\mathcal{S}}}\partial_{y^i_j}f
	\]
	with
	\[
	N_s^{\mathcal{S}}=\min\{ i, t_i>s,t_i \in {\mathcal{S}}\}
	\]
	and
	\[
	L^{s,\mathcal{S}}(F)=\frac{1}{2}\sum_{j_1,j_2 \le N_s({\mathcal{S}})}\sum_{i,l=1}^{n}\sum_{\alpha=1}^{m}\sigma_\alpha^i\sigma_\alpha^l\partial_{y^i_{j_1},y^l_{j_2}}f+\sum_{j \le N_s({\mathcal{S}})}\sum_{i=1}^{n}\mu^i(X_s)\partial_{y^i_j}f
	\]
\end{lemma}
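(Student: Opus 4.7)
My plan is to reduce the statement to repeated applications of the classical It\^o formula on the sub-intervals of $[0,t]$ over which the set of ``active'' arguments of $f$ is constant. First I would reorder the distinct elements of $\mathcal S$ in increasing order as $0=:s_0<s_1<\dots<s_k$ and partition $[0,t]$ into the deterministic sub-intervals $I_\ell:=[s_\ell\wedge t,\,s_{\ell+1}\wedge t]$. Inside each $I_\ell$ the set $\{i:t_i>s\}$ does not vary with $s$, so $N_s^{\mathcal S}$ is constant there; equivalently, the collection of arguments of $f$ already stopped at an observed value $X_{t_i}$ is the same throughout $I_\ell$.

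On a given $I_\ell$ the stopped arguments can be treated as frozen constants while every active argument coincides with the current value $X_s$. Hence one can write $\tilde F_s=g_\ell(X_s)$ on $I_\ell$, where $g_\ell\colon M\to\mathbb R$ is the smooth function obtained by plugging the common value $x$ into every active slot of $f$ and the already observed values into the remaining ones. Applying the ordinary It\^o formula to $g_\ell(X_s)$ yields
\begin{equation*}
g_\ell(X_{s_{\ell+1}\wedge t})-g_\ell(X_{s_\ell\wedge t})=\int_{s_\ell\wedge t}^{s_{\ell+1}\wedge t}\partial_{x^i}g_\ell(X_s)\,\sigma^i_\alpha(X_s)\,dW^\alpha_s+\int_{s_\ell\wedge t}^{s_{\ell+1}\wedge t}L(g_\ell)(X_s)\,ds,
\end{equation*}
with $L$ the generator from \eqref{eqGENERATORL}. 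A chain-rule identity then identifies $\partial_{x^i}g_\ell$ with $\sum_{j\in\mathrm{act}(\ell)}\partial_{y^i_j}f$, which is exactly $\partial_i^{s,\mathcal S}\tilde F$ on $I_\ell$ by definition; the analogous calculation for second derivatives produces the double-sum expression defining $L^{s,\mathcal S}(\tilde F)$.

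Finally, I would sum the resulting increments over $\ell=0,\dots,k-1$. The left-hand side telescopes to $\tilde F_t-\tilde F_0$ because the sample paths of $X$ are continuous: at each partition point $s_\ell$, the argument that becomes stopped transitions continuously from $X_s$ to the frozen value $X_{s_\ell}$, so no boundary contributions arise. The right-hand sides combine into the two integrals in the statement, their integrands being precisely the piecewise-defined processes $\partial_i^{s,\mathcal S}\tilde F$ and $L^{s,\mathcal S}(\tilde F)$. I expect the only delicate point to be the index book-keeping needed to match the chain-rule derivatives of each $g_\ell$ with the partial sums that appear in the definitions of $\partial_i^{s,\mathcal S}$ and $L^{s,\mathcal S}$; once this identification is carefully set up, the rest of the argument is essentially mechanical.
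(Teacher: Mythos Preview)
Your proposal is correct and follows essentially the same approach as the paper: both argue by observing that on each sub-interval determined by the partition $\mathcal S$ the active arguments all coincide with the running value $X_s$, then apply the ordinary It\^o formula and identify the resulting derivatives with the sums over active slots. Your presentation is in fact more explicit than the paper's (which simply invokes ``multidimensional It\^o formula'' and then unpacks the same piecewise structure in a remark); the only small point to make precise is that each $g_\ell$ has $\mathcal F_{s_\ell}$-measurable ``frozen'' parameters, which is handled in the standard way by conditioning or by the It\^o formula with adapted parameters.
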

\begin{proof}
	By applying multidimensional It\^o formula to the function $\tilde{F}^{\mathcal{S}}(X_t): M \rightarrow \Real $ we get
	\[
	\tilde{F}^{\mathcal{S}}(X_t)-\tilde{F}^{\mathcal{S}}(X_0)=\int_{0}^{t}\sum_{i=1}^n\sum_{\alpha=1}^m \partial_{i}\tilde{F}^{\mathcal{S}}(X_s)\sigma^i_\alpha dW^\alpha_s+
	\int_{0}^{t}L_s^{\mathcal{S}}(\tilde{F})(X_s)ds.\]
	We remark that $\tilde{F}^{\mathcal{S}}(X_s)$ is not a function of the $n$-dimensional v.a. $X_s$ but of a variable number of copies of $X_s$ depending on where the time $s$ falls. For example, if $t_k <s <t_{k-1}$, then $\tilde{F}^{\mathcal{S}}(X_s)=f(X_s,X_s,.,.,.,X_s,X_{t_k}) $, that is $\tilde{F}^{\mathcal{S}}(X_s) $ is a function of $k-1$ copies of $X_s$.
	
	Therefore, in the general case we recognize the following identities
	\[
	\partial_{i}\tilde{F}^{\mathcal{S}}(X_s)=\partial_{i}f(X_{t_1\wedge s},X_{t_2\wedge s},...,X_{t_k\wedge s})=\sum_{j \le N_s^{\mathcal{S}}}\partial_{y^i_j}f
	\]
	and
	\[
	\partial_{i,l}F^{\mathcal{S}}(X_s)=\partial_{i,l}f((X_{t_1\wedge s},X_{t_2\wedge s},...,X_{t_k\wedge s})=\sum_{j_1,j_2 \le N_s^{\mathcal{S}}}\partial_{y^i_{j_1},y^l_{j_2}}f
	\]
	where $y^i_j $ denotes the (effective) $j$ component of the derivative of the function $f$ with respect to $i$.
\end{proof}

We state our generalized integration by parts formula.

\begin{theorem}[Integration by parts formula]\label{integrationbyparts2}
	Let $(X.W)$ be a solution to the SDE \eqref{SDE1} and let $(Y, \tau ,H)$ be an infinitesimal stochastic symmetry for the SDE. Let us consider the one-parameter group  $T_\lambda=(\Phi_{\lambda}, \eta_\lambda, h_\lambda) $ associated with the infinitesimal symmetry according to Proposition \ref{flow}. 
	Taking a cylindrical function $F^{\mathcal{S}}$, with $\mathcal{S}=(t_1,t_2,...,t_k)$, according to Definition \ref{cylinder_function}, and assuming Hypothesis A,  the following integration by parts formula holds (with $t_{k+1}=0 $)
	\begin{multline}
	\sum_{i=1}^{k}\mathbb{E}_{\mathbb{{P}}}[-m(t_i)L^{t_i,\mathcal{S}}(\tilde{F})+m(t_{i+1})L^{t_{i+1},\mathcal{S}}(\tilde{F})]+\mathbb{E}_{\mathbb{{P}}} \left[
	\tilde{F} (X_t) \left( \int_0^t H_\alpha (X_s) dW^\alpha_s \right) \right]+\\
	+\mathbb{E}_{\mathbb{{P}}}[(Y(\tilde{F})^{\mathcal{S}})(X_t)]-\mathbb{E}_{\mathbb{{P}}}[(Y(\tilde{F})^{\mathcal{S}})(X_0)]=0
	\end{multline}
\end{theorem}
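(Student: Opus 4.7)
The strategy is to replay the argument of Theorem~\ref{integrationbyparts1} with the single functional $F(X_t)$ replaced by the cylindrical functional $\tilde{F}^{\mathcal{S}}_t$ and the corresponding Itô decomposition from the preliminary lemma. Since $T_\lambda$ is a symmetry, one has $\mathbb{E}_{\mathbb{P}}[\tilde{F}^{\mathcal{S}}_t(X)]=\mathbb{E}_{\mathbb{Q}_\lambda}[\tilde{F}^{\mathcal{S}}_t(X^\lambda)]$, and the Itô-type formula for cylindrical functionals (whose martingale part vanishes in expectation thanks to Hypothesis~A) gives $\mathbb{E}_{\mathbb{Q}_\lambda}[\tilde{F}^{\mathcal{S}}_t(X^\lambda)]=\mathbb{E}_{\mathbb{Q}_\lambda}[\tilde{F}^{\mathcal{S}}_0(X^\lambda)]+\mathbb{E}_{\mathbb{Q}_\lambda}[\int_0^t L^{s_\lambda,\mathcal{S}}(\tilde F)(X^\lambda_{s_\lambda})\,ds_\lambda]$. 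Performing the deterministic change of variables $s_\lambda=f_\lambda(s)$, using $X^\lambda_{s_\lambda}=\Phi_\lambda(X_s,s)$, and switching to $\mathbb{P}$ through $Z_\lambda=d\mathbb{Q}_\lambda/d\mathbb{P}$ rewrites the right-hand side as an expectation under $\mathbb{P}$ of a $\lambda$-dependent quantity. Crucially, the integrand $L^{f_\lambda(s),\mathcal{S}}$ is piecewise constant in the counter $N^{\mathcal{S}}_{f_\lambda(s)}$, so the integral splits as a sum over the intervals $(f_{-\lambda}(t_{i+1}),\,f_{-\lambda}(t_i)\wedge f_{-\lambda}(t)]$.

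Differentiating this identity in $\lambda$ at $\lambda=0$ inside the expectation (justified by an extension of Theorem~\ref{technicalderivability} and Lemma~\ref{lemma_derivative2} to the cylindrical setting, uniformly bounded by $\|f\|_{C^2}$ and the $L^2$-bounds of Hypothesis~A) produces four groups of contributions. The derivative of $Z_\lambda$, via Proposition~\ref{flow} and $\partial_\lambda h_{\lambda,\alpha}|_{\lambda=0}=H_\alpha$, yields the cross term $\mathbb{E}_{\mathbb{P}}[(\int_0^\mathcal{T} H_\alpha(X_s)dW^\alpha_s)(\int_0^t L^{s,\mathcal{S}}(\tilde F)(X_s)\,ds)]$. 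Differentiating each endpoint $f_{-\lambda}(t_i)$ of the split integral, using $\partial_\lambda f_{-\lambda}(t_i)|_{\lambda=0}=-m(t_i)$, produces exactly the telescoping boundary contribution $\sum_{i=1}^{k}\mathbb{E}_{\mathbb{P}}[-m(t_i)L^{t_i,\mathcal{S}}(\tilde F)(X_{t_i})+m(t_{i+1})L^{t_{i+1},\mathcal{S}}(\tilde F)(X_{t_{i+1}})]$. Differentiating $\Phi_\lambda$ yields $\mathbb{E}_{\mathbb{P}}[\int_0^t Y(L^{s,\mathcal{S}}(\tilde F))(X_s)\,ds]$, and differentiating $f'_\lambda$ yields $\mathbb{E}_{\mathbb{P}}[\int_0^t \tau(s) L^{s,\mathcal{S}}(\tilde F)(X_s)\,ds]$.

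To convert the resulting identity into the stated formula one invokes, on each interval on which $L^{s,\mathcal{S}}$ reduces to the usual generator acting in the $\{y_j:\,t_j>s\}$-block of variables, the symmetry relation $[Y,L]=-\tau L+H_\alpha \Sigma_\alpha$, which lifts block-wise to $Y(L^{s,\mathcal{S}}(\tilde F))=L^{s,\mathcal{S}}(Y(\tilde F)^{\mathcal{S}})-\tau L^{s,\mathcal{S}}(\tilde F)+H_\alpha \Sigma^{s,\mathcal{S}}_\alpha(\tilde F)$. The $\tau$-terms cancel the fourth group above, and applying the Itô-type lemma to $Y(\tilde F)^{\mathcal{S}}(X_t)$ converts $\mathbb{E}_{\mathbb{P}}[\int_0^t L^{s,\mathcal{S}}(Y(\tilde F)^{\mathcal{S}})(X_s)\,ds]$ into $\mathbb{E}_{\mathbb{P}}[Y(\tilde F)^{\mathcal{S}}(X_t)]-\mathbb{E}_{\mathbb{P}}[Y(\tilde F)^{\mathcal{S}}(X_0)]$. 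Finally, the iterated integration-by-parts manipulation of the stochastic-integral product carried out in the proof of Theorem~\ref{integrationbyparts1} (which uses only that stochastic integrals are martingales and that the quadratic covariation with an absolutely continuous process vanishes) applies verbatim here, with $L(F)(X_s)$ replaced by $L^{s,\mathcal{S}}(\tilde F)(X_s)$ and $\sigma_\alpha\nabla F(X_s)$ by $\Sigma^{s,\mathcal{S}}_\alpha(\tilde F)(X_s)$, and rewrites the cross term as $\mathbb{E}_{\mathbb{P}}[\tilde F(X_t)\int_0^t H_\alpha(X_s)dW^\alpha_s]$; collecting all contributions gives the claimed identity.

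The principal obstacles are, first, keeping track of the multiple boundary contributions generated by differentiating each $f_{-\lambda}(t_i)$ and seeing that they assemble into the telescoping sum appearing in the statement; second, verifying that the generator/commutator machinery of Theorem~\ref{Paola} still operates correctly on the piecewise, variable-arity operator $L^{s,\mathcal{S}}$; and third, promoting the derivability estimates of Theorem~\ref{technicalderivability} to this multi-time setting, for which one uses that $\tilde F$ and its cylindrical derivatives are bounded in terms of $f\in C^\infty_b(M^k)$, so the required $L^1$-bounds on $\partial^2_\lambda P(\lambda)$ follow by the same Hölder--Itô-isometry argument under Hypothesis~A.
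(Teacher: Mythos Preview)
Your proposal is correct and follows essentially the same route as the paper's proof: start from the symmetry identity, apply the cylindrical It\^o lemma, change variables and measure, split the time integral over the subintervals $[f_{-\lambda}(t_{i+1}),f_{-\lambda}(t_i)]$, differentiate at $\lambda=0$ (justified via the analogue of Theorem~\ref{technicalderivability}), use the commutator $[Y,L]=-\tau L+H_\alpha\Sigma_\alpha$ block-wise and the It\^o formula for $Y(\tilde F)^{\mathcal S}$, and then rerun the stochastic integration-by-parts manipulation from Theorem~\ref{integrationbyparts1}. Your identification of the telescoping boundary terms from $\partial_\lambda f_{-\lambda}(t_i)|_{\lambda=0}=-m(t_i)$ and of the three technical obstacles matches exactly what the paper does.
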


\begin{proof}
	Since the  random transformation $T_\lambda$ is a symmetry for the SDE, denoting by $X^\lambda_t=P_{T_\lambda}(X,W)$,  by definition of symmetry we have
	\[
	\mathbb{E}_{\mathbb{{P}}}[\tilde{F}^{\mathcal{S}}(X_t)]=\mathbb{E}_{\mathbb{{Q}_\lambda}}[\tilde{F}^{\mathcal{S}}(X^\lambda_t)].
	\]
	Using It\^o formula for a composite function $\tilde{F}(\Phi_{\lambda})$ ( Lemma \ref{cylinder_function}) we get
	\begin{equation}
	F(\Phi_{\lambda}(X_{t})=\int_{0}^{t}L^{s,\mathcal{S}}(F)( \Phi_{\lambda})^{f_{-\lambda}(\mathcal{S})}ds+\int_{0}^{t}\partial^{s,\mathcal{S}}_i (F)( \Phi_{\lambda})\sigma_\alpha^i dW^\alpha,
	\end{equation}
	which gives
	\[
	\mathbb{E}_{\mathbb{{P}}}[\tilde{F}^{\mathcal{S}}(X_t)]=\mathbb{E}_{\mathbb{{Q}_\lambda}}\left[\int_{0}^{t}L^{s,\mathcal{S}}(F)( \Phi_{\lambda})^{f_{-\lambda}(\mathcal{S})}ds_\lambda\right].
	\]
	Denoting by $f_{-\lambda}(\mathcal{S})=(f_{-\lambda}(t_1),...,f_{-\lambda}(t_k)) $ the inverse of the deterministic time change $f_\lambda (t)(\mathcal{S})=(f_{\lambda}(t_1),...,f_{\lambda}(t_k)) $ and applying a change of variables  we get, $\forall t \in [0,\mathcal{T}] $,
	\[
	\mathbb{E}_{\mathbb{{Q}_\lambda}}\left[\int_{0}^{t}L^{s,\mathcal{S}}(F)( \Phi_{\lambda})ds_\lambda\right]=\mathbb{E}_{\mathbb{{{Q}_\lambda}}}\left[\int_{0}^{f_{-\lambda}(t)} L^{s,\mathcal{S}}({F})( \Phi_{\lambda})(X_s)f^\prime_{\lambda}(\mathcal{S})ds\right]
	\]
	and performing a Radom-Nikodym measure change in the expectation on the right-hand side we obtain
	\[
	\mathbb{E}_{\mathbb{{P}}}\left[\tilde{F}^{\mathcal{S}}(X_t)\right]=\mathbb{E}_{\mathbb{{P}}}\left[\left.\frac{d\mathbb{Q}_{\lambda}}{d\mathbb{P}}\right|_{\mathcal{F}_t}\int_{0}^{f_{-\lambda}(t)}L^{s,\mathcal{S}}(\tilde{F})( \Phi_{\lambda})(X_s)f^\prime_{\lambda}(\mathcal{S})ds\right].
	\]

	Applying Fubini's theorem and putting $t_{k+1}=0$ we get
	\begin{equation}\label{invariance_principle5}
	\mathbb{E}_{\mathbb{{P}}}[\tilde{F}^{\mathcal{S}}(X_t)]=\mathbb{E}_{\mathbb{{P}}}\left[\left.\frac{d\mathbb{Q}_{\lambda}}{d\mathbb{P}}\right|_{\mathcal{F}_s}\sum_{i=1}^{k}\int_{f_{-\lambda}(t_i+1)}^{f_{-\lambda}(t_i)}L^{s,\mathcal{S}}(\tilde{F}) (\Phi_{\lambda})(X_s))f^\prime_{\lambda}(\mathcal{S})\right]ds.
	\end{equation}
	We want to take the derivative with respect to the parameter $\lambda$ of both sides in the previous equality. We note that in the left-hand side there is no dependence on the parameter $\lambda $. In both terms of the right-hand side we have to take the derivative inside the expectation. Denoting by $P(\lambda) = Z_\lambda\int_{0}^{f_{-\lambda}(t)}L^{s,\mathcal{T}}(F\circ \Phi_{\lambda})(X_s)f^\prime_{\lambda}(\mathcal{S})ds$  where $Z_\lambda=\frac{d\mathbb{Q}_{\lambda}}{d\mathbb{P}}$ , under Hypothesis \ref{Hp:A} and by  Theorem \ref{derivability_conditions}, we have that $\partial_{\lambda}^2 P(\lambda)\in L^2 $ and thus by Lemma \ref{lemma_derivative2}
	\[
	\partial_{\lambda}\mathbb{E}_{\mathbb{{P}}}[\tilde{g}(\lambda, X)]=\mathbb{E}_{\mathbb{{P}}}[\partial_{\lambda}\tilde{g}(\lambda, X)].
	\]
	
	Taking the derivative with respect to $\lambda $ in \eqref{invariance_principle5} and evaluating the result in $\lambda=0 $ we obtain
	\[
	\sum_{i=1}^{k}\mathbb{E}_{\mathbb{{P}}}[-m(t_i)L^{t_i,\mathcal{S}}(\tilde{F})+m(t_{i+1})L^{t_{i+1},\mathcal{S}}(\tilde{F})]+\mathbb{E}_{\mathbb{{P}}}\left[\left(\int_{0}^{t_1}H_\alpha(X_s)dW^\alpha_s\right)\left(\int_{0}^{t}L^{s,\mathcal{S}}(\tilde{F})(X_s)ds\right)\right]+
	\]
	\begin{equation}\label{derivativeslambda}
	+\mathbb{E}_{\mathbb{{P}}}\left[\int_{0}^{t_1}Y(L^{s,\mathcal{S}}(\tilde{F}))(X_s)ds\right]
	\end{equation}
	where the first term has been obtained by using the fundamental theorem of calculus for deriving both the $\lambda $ dependent extremes of integration, the second one by taking the partial derivative of the Doleans-Dade exponential process as in Definition \ref{defi:dea} and using the fact that, by Proposition \ref{flow}, $\partial_{\lambda}h_{\lambda, j}(x)|_{\lambda=0}=H_j(x)$ with the conditions $ \eta_0=1, \Phi_0 $,  the third one  by deriving the spatial flow $\Phi_\lambda$ according with Proposition \ref{flow} together with the property that $\Phi_0$ is the identity function and the fourth one by deriving with respect to $\lambda $ the term $f^\prime_{\lambda}(\mathcal{S})$ (as in Theorem \ref{integrationbyparts1}).
	Since $[Y,L]=-\tau L + H_\alpha \Sigma_{\alpha}$ we can write
	\[
	\mathbb{E}_{\mathbb{{P}}}\left[\int_{0}^{t_1}Y(L^{s,\mathcal{S}}(\tilde{F}))(X_s)ds\right]=\mathbb{E}_{\mathbb{{P}}}\left[\int_{0}^{t}L^{s,\mathcal{S}}(Y(\tilde{F})(X_s)ds\right]+
	\]
	\[
	- \mathbb{E}_{\mathbb{{P}}}\left[\int_{0}^{t_1} \tau(s)L^{s,\mathcal{S}}(\tilde{F})(X_s)ds\right]+
	\mathbb{E}_{\mathbb{{P}}}\left[\int_{0}^{t_1} H_\alpha (X_s) \sigma^i_{\alpha}\partial_i (\tilde{F})(X_s)ds\right]
	\]
	
	By It\^o formula we get
	\[
	\mathbb{E}_{\mathbb{{P}}}[(Y(\tilde{F})^{\mathcal{S}})(X_{t_1})]=\mathbb{E}_{\mathbb{{P}}}[(Y(\tilde{F})^{\mathcal{S}})(X_{0})]+
	\]
	\[
	\mathbb{E}_{\mathbb{P}}\left[\int_{0}^{t_1}L^{s,\mathcal{S}}(Y(\tilde{F}))(X_s)ds\right] + \mathbb{E}_{\mathbb{{P}}}\left[\int_{0}^{t_1} \partial_i^{s,\mathcal{S}} Y(\tilde{F})(X_s)\sigma^i_\alpha dW^\alpha_s\right].
	\]
 Since stochastic integrals are martingales we can write
 \[
 \mathbb{E}_{\mathbb{{P}}}\left[\left(\int_{0}^{t_1}H_\alpha(X_s)dW^\alpha_s\right)\left(\int_{0}^{t}L^{s,\mathcal{S}}(\tilde{F})(X_s)ds\right)\right]= \mathbb{E}_{\mathbb{{P}}}\left[\left(\int_{0}^{t}H_\alpha(X_s)dW^\alpha_s\right)\left(\int_{0}^{t}L^{s,\mathcal{S}}(\tilde{F})(X_s)ds\right)\right],
 \]
 and, by integration by parts,
 \[
 \mathbb{E}_{\mathbb{{P}}}\left[\left(\int_{0}^{t}H_\alpha(X_s)dW^\alpha_s\right)\left(\int_{0}^{t}L^{s,\mathcal{S}}(\tilde{F})(X_s)ds\right)\right]=\mathbb{E}\left[ \int_0^t \left\{ \left( \int_0^s H_\alpha (X_{\tau}) dW^\alpha_{\tau} \right) L^{s,\mathcal{S}}(\tilde{F})(X_s)ds \right\}\right. + \]
 \[ \left.+\int_0^t \left(\int_{0}^{s} L^{s,\mathcal{S}}(\tilde{F}) (X_\tau)d\tau\right) H_\alpha (X_{s}) dW^\alpha_{s}  +\left [\int_{0}^{t} H_\alpha (X_{s}) dW^\alpha_{s}, \int_{0}^{t}  L^{s,\mathcal{S}}(\tilde{F})(X_s)ds \right ] \right]  \]
 where the last term is a zero quadratic variation. Again by martingale property of the stochastic integral
\[ \mathbb{E}_{\mathbb{P}} \left[ \int_0^{t_1} \left( \int_0^s H_\alpha (X_{\tau})
	dW^\alpha_{\tau} \right) L^{s,\mathcal{S}}(\tilde{F})(X_s)ds \right] = \]
	\begin{equation} \label{scomposition2}
	=\mathbb{E}_{\mathbb{P}} \left[ \int_0^t \left\{ \left( \int_0^s H_\alpha
	(X_{\tau}) dW^\alpha_{\tau} \right) L^{s,\mathcal{S}}(\tilde{F})(X_s)ds + \left( \int_0^s H_\alpha
	(X_{\tau}) dW^\alpha_{\tau} \right)\partial^{s,\mathcal{S}}_i (\tilde{F}\circ \Phi_{\lambda})\sigma_\alpha^i dW^\alpha
	\right\} \right]
	\end{equation}
	Applying stochastic integration by parts formula to the second term in \eqref{scomposition2} we have
	\[ \int_0^t  \left( \int_0^s H_\alpha (X_{\tau}) dW^\alpha_{\tau}\right)
	\partial^{s,\mathcal{S}}_i (\tilde{F}\circ \Phi_{\lambda})\sigma_\alpha^i dW^\alpha = \left(\int_{0}^{t} H_\alpha (X_{s}) dW^\alpha_{s}\right) \left(\int_{0}^{t} \partial^{s,\mathcal{S}}_i (\tilde{F}\circ \Phi_{\lambda})\sigma_\alpha^i dW^\alpha\right) \]
	\[ -\int_0^t (\int_{0}^{s}\partial^{s,\mathcal{S}}_i (\tilde{F}\circ \Phi_{\lambda})\sigma_\alpha^i dW^\alpha) H_\alpha (X_{\tau}) dW^\alpha_{\tau}  -\left [\int_{0}^{t} H_\alpha (X_{s}) dW^\alpha_{s}, \int_{0}^{t}  \partial^{s,\mathcal{S}}_i (\tilde{F}\circ \Phi_{\lambda})\sigma_\alpha^i dW^\alpha  \right ],  \]
	with the quadratic variation equal to
	\[  \int_0^t (\partial^{s,\mathcal{S}}_i (\tilde{F}\circ \Phi_{\lambda})\sigma_\alpha^i) \cdot H_\alpha (X_s) ds. \]
	Summing the two stochastic expressions  we get
	\[ \left( \int_0^t L^{s,\mathcal{S}}(\tilde{F})(X_s)ds+ \int_0^t  \partial^{s,\mathcal{S}}_i (\tilde{F}\circ \Phi_{\lambda})\sigma_\alpha^i dW^\alpha  \right) \left( \int_0^t H_\alpha (X_{\tau}) dW^\alpha_{\tau}
	\right)  \]
	\[ - \int_0^t \left( \int_0^s L^{\tau,\mathcal{S}}(\tilde{F})(X_\tau)d\tau + \int_0^s  \partial^{\tau,\mathcal{S}}_i (\tilde{F}\circ \Phi_{\lambda})\sigma_\alpha^i dW^\alpha   \right) H_\alpha (X_{s}) dW^\alpha_{s} + \]
	\[ - \int_0^t ( \partial^{s,\mathcal{S}}_i (\tilde{F}\circ \Phi_{\lambda})\sigma_\alpha^i) \cdot H_\alpha (X_s) ds \]
	and inserting our final expressions in \eqref{derivativeslambda}, we finally get our integration by parts formula:
	\[
	\sum_{i=1}^{k}\mathbb{E}_{\mathbb{{P}}}[-m(t_i)L^{t_i,\mathcal{S}}(\tilde{F})+m(t_{i+1})L^{t_{i+1},\mathcal{S}}(\tilde{F})]+\mathbb{E}_{\mathbb{{P}}} \left[
	\tilde{F} (X_t) \left( \int_0^t H_\alpha (X_s) dW^\alpha_s \right) \right]
	+\mathbb{E}_{\mathbb{{P}}}[(Y(\tilde{F})^{\mathcal{S}})(X_t)]
	\]
	\[
	 - \mathbb{E}_{\mathbb{{P}}}[(Y(\tilde{F})^{\mathcal{S}})(X_0)]  =0
	\]
\end{proof}

\section*{Acknowledgement}

This work was partially supported by INdAM (Istituto Nazionale di Alta Matematica, Gruppo Nazionale per l’Analisi Matematica, la Probabilità e le loro Applicazioni and Gruppo Nazionale per la Fisica Matematica), Italy. The first Author was partially funded by the DFG under Germany’s Excellence Strategy—GZ 2047/1, Project-Id 390685813.The third Author was partially funded by DAAD-Reasearch Stay for University Academics and Scientists 2022 (Project ID-57588362).

\bibliographystyle{plain}
\bibliography{doob5}

\end{document}